\newtheorem{thm}{Theorem}[section]
\newtheorem{lem}[thm]{Lemma}
\newtheorem{cor}[thm]{Corollary}
\newtheorem{que}[thm]{Question}
\theoremstyle{definition}
\newtheorem{example}[thm]{Example}
\newtheorem{remark}[thm]{Remark}
\theoremstyle{plain}
\newcommand{\Step}[1]{\smallskip\noindent {\it Step #1.}} % For steps in proof
\newcommand{\rem}[1]{}
\newcommand{\C}{\mathbb{C}}
\newcommand{\F}{\mathbb{F}}
\newcommand{\N}{\mathbb{N}}
\newcommand{\R}{\mathbb{R}}
\newcommand{\Z}{\mathbb{Z}}
\newcommand{\HH}{{\mathrm{H}}}
\newcommand{\calA}{{\mathcal{A}}}
\newcommand{\calB}{{\mathcal{B}}}
\newcommand{\calC}{{\mathcal{C}}}
\newcommand{\calE}{{\mathcal{E}}}
\newcommand{\calF}{{\mathcal{F}}}
\newcommand{\calG}{{\mathcal{G}}}
\newcommand{\calM}{{\mathcal{M}}}
\newcommand{\calR}{{\mathcal{R}}}
\newcommand{\veps}{\varepsilon}
\newcommand{\suchthat}{\,:\,}
\newcommand{\where}{\,|\,}
\newcommand{\leftmod}{{\setminus}}
\newcommand{\Circs}[1]{\left( #1 \right)}
\newcommand{\Trings}[1]{\left< #1 \right>}
\newcommand{\floor}[1]{\lfloor {#1} \rfloor}
\newcommand{\ceil}[1]{\lceil {#1} \rceil}
\DeclareMathOperator{\id}{id} %
\DeclareMathOperator{\im}{im} %
\DeclareMathOperator{\ord}{ord} %
\DeclareMathOperator{\rank}{rank}
\DeclareMathOperator{\Span}{span} %
\DeclareMathOperator{\Spec}{Spec} %
\DeclareMathOperator{\supp}{supp} %
\DeclareMathOperator{\Tr}{Tr} %
\newcommand{\nGL}[2]{\mathrm{GL}_{#2}({#1})}
\numberwithin{equation}{section} 
\DeclareMathOperator{\res}{res} 
\DeclareMathOperator{\dist}{dist}
\DeclareMathOperator{\cb}{cbe} % coboundary expansion
\DeclareMathOperator{\cse}{cse} % coboundary expansion
\DeclareMathOperator{\ccd}{ccd} % coboundary expansion
\newcommand{\from}{\leftarrow}
\newcommand{\aug}[1]{{#1}}  %{{#1}^+} 
\newcommand{\schoose}[2]{{\textstyle {{#1} \choose {#2}}}}
\title{The Cheeger Inequality and Coboundary Expansion: \\
Beyond Constant Coefficients}
\date{}
\author{Uriya A.\ First \\
Department of Mathematics\\
University of Haifa
\and
Tali Kaufman\\
Computer Science Department, \\  
Bar-Ilan University}
\begin{document}

\maketitle

\begin{abstract}
The Cheeger constant of a graph, or equivalently its coboundary expansion,
quantifies the expansion of the graph.
This notion assumes an implicit choice of a coefficient group, namely, $\F_2$.
In this paper, we study Cheeger-type inequalities for graphs endowed with a 
generalized coefficient group, called a \emph{sheaf}; this is motivated
by applications to \emph{cosystolic expansion} and \emph{locally testable codes}.
We prove that a graph is a good spectral expander if and only if it has
good coboundary expansion relative to any (resp.\ some) \emph{constant} sheaf, 
or equivalently, relative to any `ordinary' coefficient group. 
We moreover show that sheaves that are close to being constant in a well-defined
sense are also good coboundary expanders, provided that their underlying graph
is an expander, thus giving the first example of good  coboundary expansion in non-cosntant
sheaves on sparse graphs. By contrast, we observe that for general sheaves on graphs, it is impossible to relate
the expansion of the graph and the coboundary expansion of the sheaf.

We  specialize our results to sheaves on (finite) \emph{spherical buildings}.
Specifically,
we  show that the normalized second eigenvalue of the (weighted)
graph underlying a  \emph{$q$-thick} 
$d$-dimensional  spherical building  is
$O(\frac{1}{\sqrt{q}-3d})$ if $q>9d^2$. Plugging this into our results about coboundary expansion  
gives explicit lower bounds on the coboundary expansion of some constant and non-constant sheaves
on spherical buildings; for a fixed dimension $d$, the bounds approach 
a constant as the thickness $q$ grows.

Along the way, we prove 
a  new  version of the Expander Mixing Lemma for  $r$-partite weighted
graphs.

\end{abstract}

\section{Introduction}

\subsubsection*{Expander Graphs}

Informally, a (finite) graph is called an expander if relatively many edges cross
between every set of vertices and its complement.  
More precisely,
if $X$ is a graph and $w:X\to\R_+$ is a function
assigning non-negative weights to the vertices and edges of $X$,
then the expansion of the weighted graph $(X,w)$ is quantified
by its \emph{Cheeger constant}, 
\begin{equation}\label{EQ:Cheeger-const}
h(X,w)=\min_{\emptyset\neq S\subsetneq X(0)}\frac{w(E(S,X(0) -S))}{\min\{w(S),w(X(0) -S)\}}.
\end{equation}
Here, $X(0)$ is the   set of vertices of $X$ and $E(A,B)$ denotes the set of edges with one
vertex in $A$ and the other in $B$.
One says that $(X,w)$ is an \emph{$\veps$-combinatorial expander} if $h(X)\geq \veps$.

In what follows, we shall   assume that the weight function $w$  satisfies some normalization 
conditions that are listed in \S\ref{subsec:weights}. In particular,
we require that $w(X(0))=w(X(1))=1$, where
$X(1)$ is the set of edges of $X$.
For example, when $X$ is a regular graph, one can take $w$ to be uniform,
i.e., set $w(v)=\frac{1}{|X(0)|}$
for every vertex $v\in X(0)$ and $w(e)=\frac{1}{|X(1)|}$ for every edge $e\in X(1)$.

It is a celebrated fact that    $h(X,w)$ can be bounded from below  
using  
the eigenvalues of the normalized adjacency matrix of $(X,w)$;
we recall its definition in \S\ref{subsec:expansion}. In more detail, if $\lambda_2(X,w)$
is the second-largest eigenvalue of this matrix (the largest  is $1$),
then $h(X,w)\geq 1-\lambda_2(X,w)$
(\cite[Theorem~4.4(1)]{Kaufman_2021_amplified_local_testability_preprint}, for instance). We shall say that $(X,w)$ is a \emph{$\lambda$-spectral
expander} ($\lambda\in [-1,1]$)
if all the eigenvalues of its adjacency matrix except for $1$ (counted with multiplicity $1$)
lie in the interval
$[-\lambda,\lambda]$, and write $\lambda(X,w)$ for the largest $\lambda$ for which this holds.
Thus, $h(X,w)\geq 1-\lambda (X,w)$.

\subsubsection*{Coboundary Expansion}

Meshulam--Wallach \cite{Meshulam_2009_homological_connectivity}
and
Gromov \cite{Gromov_2010_expanders_and_top_II}, 
following the earlier work of Linial--Meshulam \cite{Linial_2006_homological_connectivity},
observed that 
the  $\veps$-expansion condition for graphs
can be restated in terms of cohomology with $\F_2$-coefficients, and thus be generalized
to higher dimensions if $X$ is a (weighted) simplicial complex.
This type of expansion is quantified by the \emph{coboundary expansion} of $X$ in the relevant dimension,
and coincides with the Cheeger constant in dimension $0$.
Recent works studying    the coboundary expansion of simplicial complexes in dimensions $>0$ include \cite{Dotterrer_2012_coboundary_expanders},
\cite{Lubotzky_2015_random_latin_squares},
\cite{Kaufman_2016_isoperimetic_inequalities},
\cite{Lubotzky_2016_expansion_of_buildings},
\cite{Lobotzky_2019_random_steiner_systems},
\cite{Dinur_2022_near_coverings},
\cite{Kaufman_2021_coboundary_cosystolic_exp_strong_sym_preprint}.
%In this work,   we explore a new aspect of the problem, focusing on the $0$-dimensional case
%while  generalized coefficient groups (called \emph{sheaves}) allowed .

Recall that the $0$-dimensional coboundary expansion of a weighted graph $(X,w)$ can be defined as follows:
First, view $X$ as a $1$-dimensional simplicial complex, which means that
we add an empty face of dimension $-1$ to $X$.
We write $X(i)$  ($i\in\{-1,0,1\}$) for
the set of $i$-dimensional faces of $X$.
For every edge $e\in X(1)$, choose one its vertices, denote it as $e^+$ and denote the other vertex  as $e^-$.
Recall that an \emph{$i$-cochain} on $X$ with coefficients in $\F_2$ is
an assignment of an element of $\F_2$ to every $i$-face of $X$, i.e., a vector $f\in \F_2^{X(i)}$.
We   write $C^i=C^i(X,\F_2)=\F_2^{X(i)}$ and denote the $x$-coordinate of $f\in C^i$ as $f(x)$.
The   coboundary maps  $d_{-1}: C^{-1}\to C^{0}$ and $d_0:C^0\to C^1$
are now defined by
\begin{align*}
(d_{-1} f)(v)& =f(\emptyset), \\
(d_0 f)(e)& = f(e^+)-f(e^-). 
\end{align*} 
Clearly, $d_0\circ d_{-1}=0$. Thus, 
$B^0=B^0(X,\F_2):=\im (d_{-1})$ --- called the space of \emph{$0$-boundaries} on $X$ ---
is contained in $Z^0=Z^0(X,\F_2):=\ker(d_{0})$ --- the space of \emph{$0$-cochains} on $X$.
The \emph{coboundary expansion} of $X$ in dimension $0$ measures the expansion of $0$-cochains under $d_0$,
taking into account that $B^0$ must be mapped to $0$.
Formally, this is the largest non-negative real number $\cb_0(X,w)$ such
that
\[
\|d_0 f\| \geq \cb_0(X,w) \cdot \dist(f,B^0)\qquad\forall f\in C^0,
\]
where $\|\cdot\| $ and $\dist(,) $ are the weighted Hamming norm and distance
(in $\F_2^{X(0)}$ or $\F_2^{X(1)}$) given by $\|f\|=w(\supp f)$
and $\dist(f,g)=w(\supp(f-g))$. We   say that
$(X,w)$ is an \emph{$\veps$-coboundary expander} in dimension $0$ if $\veps\leq \cb_0(X,w)$.

It is straightforward to see that $\cb_0(X,w)$ coincides with the Cheeger constant $h(X,w)$.
However, the description of $h(X,w)$ via cochains
reveals that we have made an implicit choice of   coefficient group, namely, $\F_2$.
Indeed, the definition of $\cb_0(X,w)$ still makes sense if  replace $\F_2$
with with another abelian group,
and it is natural to ask what can be said about the coboundary expansion in this case.
 
Our first main result, Theorem~\ref{TH:expanders-are-cb-exps}, 
answers this question. Let $R$ be a nontrivial abelian group,
and let    $\cb_0(X,w,R)$  be the $0$-dimensional 
coboundary expansion of $(X,w)$ when the coefficient group is taken to be $R$. 
Then, similarly to the known inequality
$\cb_0(X,w,\F_2)=h(X,w)\geq 1-\lambda_2(X,w)$, we show that
\[
\cb_0(X,w,R)\geq 1-\lambda_2(X,w).
\]
Moreover, while $\cb_0(X,w,R)$ may vary with $R$, we always have
\[\frac{1}{2} h(X,w)\leq \cb_0(X,w,R)\leq h(X,w),\] 
so $(X,w,R)$ is a good coboundary expander whenever $(X,w)$ is a good expander (Corollary~\ref{CR:different-groups}).

That said, the  goal of this paper is to establish lower bounds on the $0$-dimensional coboundary expansion w.r.t.\ even
more general coefficient systems, called \emph{sheaves}.
Beside the independent interest in allowing more flexible coefficient
systems, 
coboundary expansion of sheaves
has implications to coding theory that we explain later in this introduction.
Briefly, as shown in \cite{First_2024_cosyst_exp_posets_preprint} (see also \cite{First_2024_cosyst_exp_posets_stoc}), sheaves with good coboundary expansion
are an important ingredient in constructing good \emph{cosystolic expanders},
which in turn give rise to \emph{locally testable codes}.
Our results here are specifically
needed in \cite[\S9--10]{First_2023_sheaves_on_complexes_preprint}\footnote{
	Section~1--8 of \cite{First_2023_sheaves_on_complexes_preprint} are now subsumed
	by \cite{First_2024_cosyst_exp_posets_preprint}. The remaining sections
	of
	\cite{First_2023_sheaves_on_complexes_preprint} are planned to be subsumed by future work 
	featuring stronger results.
} for this purpose.

\subsubsection*{Sheaves}

The common meaning of a \emph{sheaf} in the mathematical literature is a sheaf
on a topological space; such sheaves are ubiquitous to topology and algebraic geometry.
The sheaves that we consider here, however, are discrete, more elementary analogues that are defined
over  \emph{cell complexes}  
and
are also known in the literature as \emph{cellular sheaves}
or \emph{local systems}.
They were first introduced by Shepard \cite{Shepard_1985_cellular_descrip_of_der_cat_PhD}
and studied further by Curry \cite{Curry_2014_sheaves_cosheaves_PhD};
a concise treatment can be found in \cite{Hansen_2019_spectral_thy_of_sheaves},
\cite[\S5]{First_2024_cosyst_exp_posets_preprint}
or \cite[\S4]{First_2023_sheaves_on_complexes_preprint}.

For simplicity, we restrict our discussion here to sheaves on graphs
and recall the general definition later in \S\ref{subsec:sheaves}.
Similarly
to \cite{First_2024_cosyst_exp_posets_preprint}, and unlike \cite{Shepard_1985_cellular_descrip_of_der_cat_PhD} and
\cite{Curry_2014_sheaves_cosheaves_PhD},  we shall
view all graphs as $1$-dimensional simplicial complexes
and 
take the empty face of a graph into account when defining a sheaf on it.

Let $X$ be  a graph. A  \emph{sheaf}   $\calF$ on $X$ consists of 
\begin{enumerate}[label=(\arabic*)]
	\item an abelian group $\calF(x)$ for every  $x\in X=X(-1)\cup X(0)\cup X(1)$, and
	\item a group homomorphism $\res^\calF_{y\from x}:\calF(x)\to\calF(y)$ for all $x\subsetneq y\in X$
\end{enumerate}
such that
\begin{equation}\label{EQ:sheaf-cond-intro}
	\res^\calF_{e\from v}\circ\res^\calF_{v\from \emptyset} = \res^\calF_{e\from \emptyset}
\end{equation}
for every edge $e$ and vertex $v$ of $e$.
%\footnote{
%		It is sometimes convenient to set $\res^\calF_{x\from x}=\id_{\calF(X)}$,
%		and then we have
%		$\res^\calF_{e\from v}\circ\res^\calF_{v\from \emptyface} = \res^\calF_{e\from \emptyface}$} 
The maps $\res^{\calF}_{y\from x}$
are   the \emph{restriction maps} of $\calF$.
We will usually drop the superscript $\calF$ from $\res^\calF_{y\from x}$
when there is no risk of confusion.

The simplest example of a  sheaf on $X$ is obtained by choosing an abelian group
$R$ and setting
$\calF(x)=R$ for every $x\in X$  and $\res^{\calF}_{y\from x}=\id_R$ for every $x\subsetneq y\in X$.
We denote this   sheaf by $\aug{R}_X$; sheaves of this form (up to isomorphism)
are called \emph{constant} sheaves on $X$.

Note that if one takes $\calF(\emptyset)=0$, then condition \eqref{EQ:sheaf-cond-intro} holds automatically. This gives rise to numerous examples of sheaves. More sophisticated
examples will be considered later.

\subsubsection*{Coboundary Expansion of Sheaves on Graphs}

Let $(X,w)$ be a weighted graph.
We can replace the role of the coefficient group $R$ in the definition
of $\cb_0(X,w,R)$ with a general   sheaf on $X$.

In more detail, let $\calF$ be a    sheaf on $X$ such that $\calF(x)\neq 0$ for every nonempty
face $x\in X$.
The \emph{$i$-cochains} ($i\in\{-1,0,1\}$)
of $X$ with coefficients in $\calF$ are memebers of $C^i=C^i(X,\calF):=\prod_{x\in X}\calF(x)$.
That is, every $f\in C^i$ consists of a collection $(f(x))_{x\in X(i)}$ where $f(x)\in\calF(x)$ for
every $x\in X(i)$.
We define the coboundary maps $d_{-1}:C^{-1}\to C^0$ and $d_0:C^0\to C^1$
as in the case of $\F_2$-coefficients, but with the difference that the restriction maps
of $\calF$ are invoked:
\begin{align}\label{EQ:differential-sheaves}
(d_{-1}f)(v) &= \res_{v\from \emptyset} f(\emptyset)  & & \forall v\in X(0), \\
(d_{0} f)(e) &= \res_{e\from e^+} f(e^+) -\res_{e\from e^-} f(e^-) & &\forall e\in X(1).\nonumber
\end{align}
Again, we have $d_0\circ d_{-1}=0$, and so $B^0(X,\calF):=\im  d_{-1} \subseteq \ker d_0 =: Z^0(X,\calF)$. We shall abbreviate $B^0(X,\calF)$ and $Z^0(X,\calF)$
to $B^0$ and $Z^0$ when there is no risk of confusion.
The quotient $\HH^0(X,\calF):=Z^0 /B^0 $ is the $0$-th \emph{cohomology group} of $\calF$. 
The $0$-coboundary expansion of $(X,w,\calF)$, or just $\calF$,
is the smallest non-negative real number $\cb_0(X,w,\calF)$ such that
\[
\|d_0 f\| \geq \cb_0(X,w,\calF) \dist(f,B^0)\qquad\forall f\in C^0(X,\calF),
\]
where again, $\dist( , )$ is the weighted Hamming distance  on $C^0$ or $C^1$
given by $\dist(f,g)=w(\supp(f-g))$. 
Note that $\cb_0(X,w,\calF)$ can be positive only if $B^0=Z^0$,
or equivalently,
$\HH^0(X,\calF)=0$.

Our earlier discussion of coboundary expansion with coefficients in an abelian group
$R$ can now be seen as addressing the
special case of a constant  sheaf $\aug{R}_X$. 

\medskip

The main result of this work bounds the $0$-dimensional
coboundary expansion of some  special \emph{non-constant}   sheaves
using the spectral expansion $\lambda(X,w)$ of their underlying weighted graph $(X,w)$.
This gives rise to the first examples of \emph{non-constant} sheaves on sparse graphs
having good coboundary expansion.
Before describing this result, we first explain why the non-constant case is   difficult
to handle.

\subsubsection*{What Cannot Be Said in General}

Let $\calF$ be a    sheaf on a weighted graph $(X,w)$.
In contrast with constant case $\calF=\aug{R}_X$, when $\calF$
is general,
using 
(some function of) $\lambda(X,w)$ in
order to bound $\cb_0(X,w,\calF)$ from below  is impossible, even if we impose the necessary requirement $Z^0(X,\calF)=B^0(X,\calF)$.
Indeed, since the restriction maps are not required to be injective,
there is no reason to expect that the boundary of
a $0$-cochain $f\in C^0(X,\calF)$ will have support   that is proportional in weight to that of $f$. The following example
makes this intuition precise.

\begin{example}
	Let $(X,w)$ be any weighted  graph, and let $R$ be a nonzero abelian group.
	Define a sheaf $\calF$ on $X$ by setting:
	\begin{itemize}
		\item $\calF(x)=R$ for every $x\in X(0)\cup X(1)$,
		\item $\calF(\emptyset)=R^{X(0)}$,
		\item $\res^\calF_{e\from v}=0$ for every $e\in X(1)$ and $v\in X(0)$ with $v\subseteq e$.
		\item $\res^{\calF}_{v\from \emptyset}:R^{X(0)}\to R$ is the projection onto the
		$v$-component for every $v\in X(0)$.
	\end{itemize}	 
	One readily checks that $B^0(X,\calF)=Z^0(X,\calF)=R^{X(0)}$ and that $d_0(f)=0$ for every $f\in C^0(X,\calF)$. Thus,
	$\cb_0(X,w,\calF)=0$, regardless of what   $\lambda(X,w)$ or $h(X,w)$ are.
\end{example}

It is tempting to hope that the problem
highlighted in the example would be solved
if we would  require all the restriction maps $\res_{e\to v}$ ($v\in X(0)$, $e\in X(1)$)
to be injective. However,
we show that this is still not the case in the more sophisticated Example~\ref{EX:locally-constant-bad-cbe}.

\subsubsection*{The Sheaves which We Study}

Since addressing the general case is futile, we focus in this work on a special kind of  
sheaves, namely, \emph{quotients} of  constant   sheaves by a ``small'' subsheaf.

In more detail, let $(X,w)$ be a weighted graph and let $\calF$ be a  sheaf on $X$. As expected,
a \emph{subsheaf} of $\calF$ is a  sheaf $\calG$ on $X$ such that $\calG(x)$ is a subgroup 
of $\calF(x)$
for every $x\in X$, and the restriction maps of $\calG$ agree with those of $\calF$.
In this case, one can form the quotient sheaf $\calF/\calG$, which assigns
every $x\in X$ the abelian group $\calF(x)/\calG(x)$, and has the evident restriction maps;
see \cite[Example~4.1]{First_2023_sheaves_on_complexes_preprint} for details.
In the special case where $\calF=\aug{R}_X$ for an abelian group $R$,
specifying a subsheaf of $\calF$ amounts merely to specifying a subgroup $\calG(x)$ of
$R$ for every $x\in X$, subject to the requirement that $\calG(x)\subseteq \calG(y)$
whenever $x\subseteq y$. This can be simplified even further: choose a subgroup $R_x\subseteq R$
for every $x\in X$ (including $x=\emptyset$), 
and then put $\calG(x)=\sum_{y\subseteq x} R_y$. That is, put:
\begin{itemize}
	\item $\calG(\emptyset)=R_\emptyset$,
	\item $\calG(v)=R_\emptyset+R_v$ for all $v\in X(0)$, and
	\item $\calG(e)=R_\emptyset+R_u+R_v+R_e$ for all $e\in X(1)$, where $u,v$ are the vertices of $e$.
\end{itemize}
One can quickly reduce to the case where $R_\emptyset=0$, so we will assume this henceforth.

Now consider the quotient sheaf $\aug{R}_X/\calG$.
As the following example shows, even in this restricted setting, $\cb_0(X,w,\aug{R}_X/\calG)$ may  be $0$ if no assumption
is made on the subgroups $\{R_x\}_{x\in X}$.

\begin{example}\label{EX:problem-in-quotient-sheaves}
Suppose that $(X,w)$
is a weighted
graph and  $R$ is a vector space $V$ of some large dimension   over a field $\F$.
Choose some nonconstant $f\in C^0(X,\aug{V}_X)=V^{X(0)}$ and put $g=d_0 f\in C^1(X,\aug{V}_X)$.
Set $R_v=0$ for every vertex $v\in X(0)$, and for every $e\in X(1)$,
let $R_e=\F\cdot g(e)$ --- a subspace of $V$ of dimension $1$ or $0$.
While $\aug{V}_X/\calG$ may seem very ``close'' to $\aug{V}_X$, we actually
have $\cb_0(X,w,\aug{V}_X/\calG)=0$ even when $\cb_0(X,w,\aug{V}_X )\geq \frac{1}{2}h(X,w)$ is large.
Indeed, since $\calG(v)=0$ for every $v\in X(0)$, we have $C^0(X,V_X)=C^0(X,V_X/\calG)$, and so  
we may view $f$ as an element of $C^0(X,\aug{V}_X/\calG)-B^0(X,\aug{V}_X/\calG)$.
By construction $d_0f=0$ in $C^1(X,\aug{V}_X/\calG)$, so $\HH^0(X,V_X/\calG)\neq 0$
and $\cb_0(X,V_X/\calG)=0$. 
\end{example}

The problem demonstrated in the example can be overcome by imposing some \emph{linear disjointness}
assumptions on the $\{R_x\}_{x\in X}$. Here, a finite
collection of subgroups $\{R_i\}_{i\in I}$ of $R$ is said to be
\emph{linearly disjoint}
if the summation map $(r_i)_{i\in I} \mapsto \sum_i r_i: \prod_{i\in I} R_i\to R$ is injective.
For instance,   in Example~\ref{EX:problem-in-quotient-sheaves}, 
if the edges $e_1,\dots,e_\ell$ form a cycle in $X$,
then $g(e_1)+\dots+g(e_\ell)=0$, which means that $R_{e_1},\dots,R_{e_\ell}$
are \emph{not} linearly disjoint (unless all the $g(e_i)$ are $0$). 
Our main result says that if we do impose some linear disjointness assumptions,
then $\cb_0(X,w,\aug{V}_X/\calG)$ will be large provided
that $(X,w)$ is a good spectral expander.

\subsubsection*{The Main Result}

Let $(X,w)$ be a weighted graph on $n$ vertices, let $R$ be an abelian group and let $\{R_x\}_{x\in X}$
be subgroups of $R$ with $R_\emptyset=0$. Define the subsheaf $\calG$ of $\aug{R}_X$ as before,
and
suppose that the following   linear disjointness assumptions holds: 
\begin{enumerate}[label=(\arabic*)]
	\item  For every subgraph $Y$ of $X$ which is either a cycle
		of length $\leq \ceil{\frac{2}{3}n}$ or a path of a length $\leq 2$
		(see \S\ref{subsec:complexes}),
		the summation map $\prod_{y\in Y(0)\cup Y(1)} R_y\to R $ is injective.
	\item For every distinct $u,v\in X(0)$, we have $R_u\cap R_v=0$.
\end{enumerate}
We show in Theorem~\ref{TH:cbe-for-quotient-sheaves} that under these hypotheses,
we have
\begin{equation}\label{EQ:main-res-intro}
\cb_0(X,w,\aug{R}_X/\calG)\geq \frac{2}{5}-\frac{8}{5}\lambda(X,w)-\frac{7}{5}t,
\end{equation}
where 
$t$ is the maximum of $\frac{w(e)}{w(v)}$ taken over all pairs of a vertex $v$
and edge $e$ with $v\subseteq e$.
(For example, $t=\frac{2}{k}$    
if $X$ is a $k$-regular graph and $w$ is given by $w(v)=\frac{1}{|X(0)|}=\frac{1}{n}$ for $v\in X(0)$
and $w(e)=\frac{1}{|X(1)|}=\frac{2}{kn}$ for $e\in X(1)$.)

We also prove a variant of this result for \emph{$(r+1)$-partite weighted graphs},
i.e., the underlying graphs of $(r+1)$-partite weighted $r$-dimensional simplicial complexes.
Such weighted graphs always have $-\frac{1}{r}$ as an eigenvalue, which makes
the right hand side of \eqref{EQ:main-res-intro} negative if $r$ is too small, e.g., if $X$ is a bipartite graph.
In Theorem~\ref{TH:cbe-for-partite-quotient-sheaves}, we show that the eigenvalue $-\frac{1}{r}$
can be ignored at the expense of getting a slightly smaller (but still positive) 
lower bound on $\cb_0(X,w,\aug{R}_X/\calG)$.

\begin{remark}
The reason why \eqref{EQ:main-res-intro}
fails in Example~\ref{EX:problem-in-quotient-sheaves} (for $\lambda(X,w)$ and $t$ sufficiently
small) is because condition (1) does not hold. 
Indeed, we noted earlier  that for the $\{R_x\}_{x\in X}$ of that example, if $e_1,\dots,e_\ell$ are the edges of a cycle in $X$,
then
the groups $R_{e_1},\dots,R_{e_\ell}$ are not linearly disjoint, unless all are zero.

Nevertheless, if we further assume that all the $R_e$ ($e\in X(1)$)
are $0$, then it may be possible to relax condition (1).
Under this assumption, it seems plausible that   a result similar
to Theorem~\ref{TH:cbe-for-quotient-sheaves} should hold
when   every  
$o(n)$ of the $R_v$ ($v\in X(0)$)  are linearly disjoint, but we do not know how to push below $\Theta(n)$
--- even getting our present result  was difficult. 
Such an improvement is desirable since presently, in order to assert
\eqref{EQ:main-res-intro}, $R$ needs to be very large with respect to the $R_v$
if they are nonzero (e.g., if $R$ is a vector space
and all the $R_v$ are subspaces of dimension $c$,
then $\dim R$ must be at least $\frac{2}{3}cn$).
Either way, as we shall shortly see, despite the restrictive condition (1),
Theorem~\ref{TH:cbe-for-quotient-sheaves}   has some applications.
\end{remark}

\subsubsection*{About The Proof }

The proof of   Theorems~\ref{TH:cbe-for-quotient-sheaves}
and~\ref{TH:cbe-for-partite-quotient-sheaves} is somewhat involved. 
Broadly speaking, given a $0$-cochain $f\in C^0(X,\aug{R}_X/\calG)$
such that its coboundary has 
small support, we restrict $f$ to special subgraphs $Y$ of $X$, e.g.\ cycles,
showing that $f $ agrees with some   $g\in B^0(X,\aug{R}_X/\calG)$  on that $Y$ (here $g$  
depends on $Y$). We consider the maximal subgraphs $Y$ having the property that $g$
exists and study their structure to eventually
show that at least one of them has  a large weight. The existence of this large
suchgraph means that $f$ cannot be too far from $B^0(X,\aug{R}_X/\calG)$, 
and that is enough to get \eqref{EQ:main-res-intro} with a little
more work.

\subsubsection*{A Side Result: Expander Mixing Lemma for Weighted $(r+1)$-Partite Graphs}

In the course of proving our main result, we   established
variants of the Expander Mixing Lemma (EML) for
weighted graphs (Theorem~\ref{TH:EML}) and  
weighted $(r+1)$-partite graphs 
(Theorem~\ref{TH:EML}), which were not available in the literature. 
Given a weighted graph $(X,w)$, 
our weighted EML states that for every $A,B\subseteq X(0)$
with $\alpha=w(A)$ and $\beta=w(B)$,
one has
$|\frac{1}{2}w(E_{\ord}(A,B))-\alpha\beta|\leq  \lambda(X,w)\sqrt{\alpha(1-\alpha)\beta(1-\beta)}$,
where $ E_{\ord}(A,B) $ is the set of   oriented edges from $A$ to $B$;
this is well-known when $X$ is regular and $w$ is uniform.
When $X$ is $(r+1)$-partite, $-\frac{1}{r}$ is an eigenvalue of $(X,w)$ (occurring with multiplicity $r$),
which means that $\lambda(X,w)\geq \frac{1}{r}$.
Our EML for weighted $(r+1)$-partite graphs is similar to the non-partite one, except one is allowed
to ignore the eigenvalue $-\frac{1}{r}$ when defining $\lambda(X,w)$.
These variants of the EML
may be useful elsewhere.

\subsubsection*{The Case of Spherical Buildings}

\emph{Spherical buildings} are an important  class of similicial complexes
admitting special structural properties; see \cite{Abramenko_2008_Buildings}.
Gromov conjectured that the coboundary expansion
of all spherical buildings of dimension $\leq d$
is bounded away from zero in all dimensions if the coefficient group is $\F_2$,
and this was affirmed in \cite{Lubotzky_2016_expansion_of_buildings} for the coefficient group
$\F_2$ (with a natural choice of weights), 
and for a general constant coefficient group   in \cite{Kaufman_2018_cosystolic_expanders}.
We improve these results for $0$-coboundary expansion, and then apply our main result
to bound from below the $0$-coboundary expansion of some non-constant sheaves on spherical buildings.
We note that here it is crucial that
our main result applies in the generality of multipartite graphs carrying a general weight
function.

In more detail, let $(X,w)$ be the weighted graph underlying a finite $r$-dimensional
$q$-thick spherical building; such $X$ is $(r+1)$-partite.
We show in Theorem~\ref{TH:good-expansion-of-buildings}
that all the eigenvalues of $(X,w)$  except  
$1$ and $-\frac{1}{r}$   lie in the interval
$[-r\lambda,\lambda]$ with $\lambda= O(\frac{1}{\sqrt{q}-3r})$,
provided $q>(3r)^2$.
By plugging this   into our main results, we conclude that
$\cb_0(X,w,\aug{R}_X)\geq 1-O(\frac{1}{\sqrt{q}-3r})$ for every abelian group 
$R\neq 0$ (Corollary~\ref{CR:cbe-buildings-constant}), and
for a subsheaf $\calG$ of $\aug{R}_X$ as before, we have
\[
\cb_0(X,w,\aug{R}_X/\calG)\geq  \frac{2r}{5r+2}-O(\frac{r^2}{\sqrt{q}-3r}),
\]
provided (1) and (2) hold (Corollary~\ref{CR:expansion-of-quotient-sheaves-on-buildings}). These results also
apply to  finite simplicial complexes
covered by a $q$-thick \emph{affine building} of dimension $r\geq 2$.

We also note that our bound $\lambda_2(X,w)=O(\frac{1}{\sqrt{q}-3r})$ (for $q> 9r^2$)
implies that the $q$-thick spherical building 
$X$ is an  $O(\frac{1}{\sqrt{q}-3r})$-skeleton expander
(see \cite[\S7.2]{First_2024_cosyst_exp_posets_preprint}, for instance).
This improves a result of Evra and the second author
\cite[Theorem~5.19]{Evra_2016_cosystolic_expanders_arxiv_version}
who showed that $X$ is an $O(\frac{2^r(r+1)!}{\sqrt{q}})$-skeleton expander.

\subsubsection*{Implications to Cosystolic Expansion}

Cosystolic expansion is a more lax version of coboundary expansion.
It was   introduced in \cite{Dotterrer_2018_topological_overlap},
\cite{Kaufman_2016_isoperimetic_inequalities}, \cite{Evra_2016_cosystolic_expanders_arxiv_version}
in order to extend the reach of Gromov's  work \cite{Gromov_2010_expanders_and_top_II} 
which relates the coboundary expansion of a simplicial complex 
and the minimal amount of overlapping
forced by  mapping it into Euclidean space of the same dimension.
It was further noted in \cite{Kaufman_2014_high_dim_expanders_property_testing}
that cosystolic expansion is related to \emph{locally testable codes} and \emph{quantum CSS codes}.
These works all use $\F_2$ as the implicit coefficient group, but,
as observed by the authors in 
\cite{First_2023_sheaves_on_complexes_preprint}
and \cite{First_2024_cosyst_exp_posets_preprint}, cosystolic expansion can be defined for any sheaf on 
a simplicial complex, or even a graded poset.

Restricting to the case of weighted graphs,
the \emph{$0$-cosystolic expansion} of a sheaf  $\calF$ on a weighted graph $(X,w)$ is the
smallest non-negative real number $\cse_0(X,w,\calF)$ for which
\begin{equation}\label{EQ:cse-zero}
\|d_0 f\| \geq \cse_0(X,w,\calF) \dist(f,Z^0(X,\calF))
\qquad \forall \, f\in C^0(X,\calF).
\end{equation}
Thus, if $Z^0(X,\calF)=B^0(X,\calF)$, then $\cse_0(X,w,\calF)$ is the same
thing as $\cb_0(X,w,\calF)$. However, while  $\cb_0(X,w,\calF)=0$ if $Z^0\neq B^0$,
the $0$-cosystolic expansion may be positive in this case. 

Cosystolic expansion
is usually studied together with the \emph{$0$-cocycle distance}, 
\[\ccd_0(X,w,\calF)=\sup\{\|f\|\where f\in Z^0-B^0\}\]
and a family of sheaved graphs  is said to have good $0$-cosystolic expansion if both their
$0$-cosystolic expansion and $0$-cocycle distance are bounded away from $0$.

Constructing \emph{dense} coboundary expanders turned out to be a key ingredient  in constructing
\emph{sparse} cosystolic expanders. (Here, as usual, a family of graphs or simplicial complexes is called sparse
	if the number of faces of every member is linear in the number of its 
	vertices, and dense otherwise.)
More precisely, if $\calF$ is a sheaf on a (weighted) simplicial complex $X$,
then $\calF$ is a good $i$-cosystolic expander when the restriction of $\calF$ to each
proper \emph{link} of $X$ has sufficiently good coboundary expansion in a range of dimensions.
This principle, sometimes called the \emph{local criterion for cosystolic expansion},
was first observed in \cite{Kaufman_2016_isoperimetic_inequalities} for triangle complexes
and the constant sheaf $ (\F_2)_X$, and has  been repeatedly improved in a series of works
\cite{Evra_2016_cosystolic_expanders},
\cite{Kaufman_Mass_2021_cosystolic_non_abelean},
\cite[\S8]{First_2023_sheaves_on_complexes_preprint},
\cite[Theorem~7]{Kaufman_Mass_2022_improved_cosystole},
\cite{Dikstein_2023_cbe_cse_without_dep_on_dim_deg}
until it was shown to hold for all sheaves on all graded posets in \cite{First_2024_cosyst_exp_posets_preprint}
(for constant sheaves,
better  expansion constants are achieved in
\cite[Theorem~7]{Kaufman_Mass_2022_improved_cosystole} and
\cite{Dikstein_2023_cbe_cse_without_dep_on_dim_deg}).

Plugging our main results about coboundary expansion and
their specializations to spherical buildings  
into to the local criterion for cosystolic expansion
gives rise to new examples of good sparse $0$-cosystolic expanders, which are detailed
in \cite[Theorem~9.5]{First_2023_sheaves_on_complexes_preprint}.
%(see also \S12 and Corollary 12.5 of that source).
These good $0$-cosystolic expanders are non-constant sheaves on   \emph{Ramanujan
complexes}; our results here are used for checking that the proper links of these sheaved
simplicial complexes, which are sheaved spherical buildings, have large $0$-coboundary expansion.

\subsubsection*{Implications to Locally Testable Codes}

Our results have some implications to \emph{locally testable codes} (LTCs).
We refer the reader to \cite[\S2.3]{First_2024_cosyst_exp_posets_preprint},
for instance, for a concise exposition of all the relevant definitions.

Informally, an LTC is an error correcting code
$C\subseteq \Sigma^n$ (as usual, $n\to \infty$ and $C$ varies with $n$,
but $\Sigma$ remains constant) together with a probabilistic
algorithm, called a \emph{tester}, which can decide with high probability
whether a word in $\Sigma^n$ belongs to $C$ by reading just $O(1)$ of its letters.
More precisely, the chances of correctly detecting that a word $f$ is not in $C$
are at least proportional to the  relative Hamming distance of $f$ from $C$.
The question of whether there exist
LTCs which are \emph{good}, i.e.,
have  \emph{rate} and \emph{relative distance} bounded away from $0$, was open until it
was answer on the positive in \cite{Dinur_2022_ltcs_const_rate} and \cite{Panteleev_2022_good_quantum_codes}  independently.

It was observed in
\cite{Kaufman_2014_high_dim_expanders_property_testing},
\cite{First_2023_sheaves_on_complexes_preprint}
and \cite{First_2024_cosyst_exp_posets_preprint} 
that sheaves with good cosystolic expansion give rise to LTCs with linear distance.
Indeed, restricting to the case of sheaves on graphs for simplicitly,
suppose that $\calF$ is a sheaf on a weighted graph $(X,w)$ for which
there is  an abelian group $\Sigma\neq 0$ such that $\calF(v)=\Sigma$ for every
$v\in X(0)$. Then $C^0(X,\calF)=\Sigma^{X(0)}$, and so we may think of
$Z^0=Z^0(X,\calF)$ as a code inside $\Sigma^{X(0)}$. The code $Z^0$ --- called a \emph{$0$-cocycle code} --- has a natural   tester:
Given $f\in \Sigma^{X(0)}=C^0$,
pick an edge $e\in X(1)$ at random according to the distribution $w|_{X(1)}$,
read the letters $f(e^+)$ and $f(e^-)$, and accept $f$ (i.e., estimate
that it belongs to $Z^0$)
if and only if $d_0 f(e)=\res_{e\from e^+} f(e^+)-\res_{e\from e-}f(e^-)$ is $0$,
cf.\ \eqref{EQ:differential-sheaves}.
Clearly, this tester accepts all $f$ in $Z^0$, and the probability
that it rejects an $f\in C^0-Z^0$ is precisely $\|d_0f\|$. 
Putting $\mu=\cse_0(X,w,\calF)$, we may therefore rewrite
\eqref{EQ:cse-zero} as
\begin{equation}\label{EQ:soundness}
\mathrm{Prob}(\text{tester rejects $f$})
\geq 
\mu \dist(f,Z^0)
\qquad
\forall\, f\in \Sigma^{X(0)}.
\end{equation}
In particular, provided that $\mu>0$, the further away $f$ is in (weighted) Hamming
distance from $Z^0$, the higher the rejection chances are.
Assuming $w$ is uniform on the vertices of $X$, 
\eqref{EQ:soundness} is equivalent to saying that the tester we described
has \emph{soundness} $\mu$. Since an LTC is a code with a tester having soundness bounded away from $0$,
a family of sheaves with $0$-cosystolic expansion bounded away from $0$ would give rise to an LTC.
Similarly, assuming that $\calF(\emptyset)=0$, the $0$-cocycle distance $\ccd_0(X,w,\calF)$ is the relative distance of the code $Z^0\subseteq C^0=\Sigma^{X(0)}$.
Thus, sheaves with good cosystolic expansion give rise locally testable codes
with linear distance. 

Our results about $0$-coboundary expansion of graphs were
used together with the local criterion for cosystolic expansion
in \cite[\S12, Cor.~12.5]{First_2023_sheaves_on_complexes_preprint}
to construct good $0$-cosystolic expanders which give rise to LTCs
with linear distance and conjecturally large rate.

We note in passing that while the first good LTCs  of \cite{Dinur_2022_ltcs_const_rate} and \cite{Panteleev_2022_good_quantum_codes} did not arise
from sheaves with good cosystolic expansion, we 
showed in \cite{First_2024_cosyst_exp_posets_preprint} 
(see also \cite{First_2024_cosyst_exp_posets_stoc})
that there are sheaves (on square complexes) which do
give rise to good LTCs and the fact that they are indeed
LTCs may be verified using the local criterion for cosystolic expansion,
but this does not use the results of this paper.

We finally remark that our main result 
also implies that sheaves of the form $\aug{R}_X/\calG$ also have
good $0$-cosystolic expansion (Corollary~\ref{CR:cse-quotient-sheaves})
and thus gives  new examples
of LTCs. These examples, however, have very small rate, or very large alphabet.

\subsubsection*{Outline}

The paper is organized as follows:
Section~\ref{sec:prelim} is preliminary and recalls necessary facts about simplicial complexes,
weight functions, spectral expansion and multipartite graphs.
In Section~\ref{sec:mixing}, we prove weighted and
multipartite versions of the Cheeger Inequality
and the Expander Mixing Lemma.
Section~\ref{sec:sheaves} recalls sheaves on graphs,
their cohomology and their expansion.
In Section~\ref{sec:cbe-constant}, we show that an expander graph with a constant   sheaf
is a good coboundary expander in dimension $0$, but that this may fail for \emph{locally constant sheaves}.
Section~\ref{sec:cbe-quotients} is dedicated to proving our main result: 
the   quotient of a constant  sheaf on a sufficiently good expander graph 
by a ``small'' subsheaf
has good $0$-coboundary expansion.
This result is applied to finite spherical buildings in Section~\ref{sec:buildings}.
Finally, in Section~\ref{sec:questions}, we raise some questions about potential improvements
of our results.

\subsection*{Acknowledgments}

Uriya First is supported by an ISF grant no.\ 721/24.
Tali Kaufman is supported by an ERC grant and an ISF grant.

\section{Preliminaries}
\label{sec:prelim}

\subsection{Simplicial Complexes}
\label{subsec:complexes}

Recall that a simplicial complex $X$ is a nonempty set of finite sets
with the property that $x\in X$ implies $y\in X$ for every $y\subseteq x$.
If not indicated otherwise, similicial complexes (and graphs)   are assumed to be finite.

The elements of $X$ are called \emph{faces}.
The dimension of a face $x\in X$ is $\dim x=|x|-1$ and the dimension of $X$
is the supremum of the dimensions of its faces.

A face of dimension $i$
in  $X$ is also called an $i$-face, and the set of $i$-faces is denoted $X(i)$.
Notice that $X$ has a single face of dimension $-1$, namely, the empty face $\emptyset$.
The \emph{vertex set} of $X$ is the union of all the faces of $X$ and is denoted
$V(X)$; every face of $X$ is a subset of $V(X)$. There is a one-to-one correspondence between $V(X)$ and $X(0)$ given
by $v\leftrightarrow \{v\}$,
and when there is no risk of confusion, we will treat vertices as $0$-faces
and vice versa. Members of $X(1)$ are also called edges. 

In this work, a  \emph{graph} is a  simplicial complex of dimension $1$ or less.
In particular, graphs are non-oriented and have no loops or double edges
(but double edges can be accounted for using   weight functions discussed in \S\ref{subsec:weights}).

The $k$-dimensional skeleton of $X$ is the simplicial complex
$X({\leq} k):=\bigcup_{i=-1}^k X(i)$.
Given $z\in X$ and $A\subseteq X$,   we write 
\begin{align*}
A_{\supseteq z}&=\{y\in A\suchthat y\supseteq z\}, \\
A_{\subseteq z}&=\{y\in A\suchthat y\subseteq z\}, \\
A_z&=\{y-z\where y\in A_{\supseteq z}\}.
\end{align*}
For example, if $v\in X(0)$ is a $0$-face, then
$X(1)_{\supseteq v}$ is the set of edges  containing $x$,
and $X(1)_v$ is the set $0$-faces adjacent to $v$.
For a general $z\in X$, the set $X_z$ is known as the \emph{link} of $X$ at $z$ and is   a simiplicial
complex.

An ordered edge in $X$ is a pair $(u,v)\in V(X)\times V(X)$ such that $\{u,v\}$ is an edge in $X$.
The set of ordered edges in $X$ is denoted $X_{\ord}(1)$.
Given $A,B\subseteq X(0)$,   we write $E (A,B)$ for
the set of   edges in $X$  with one vertex in $A$ and the other in $B$.
Likewise, 
$E_{\ord} (A,B)$ is the set of ordered edges $(u,v)\in X_{\ord}(1)$ with  $\{u\}\in A$ and
$\{v\}\in B$.
We also let $E(A)=E(A,A)$ and $E_{\ord}(A)=E_{\ord}(A,A)$.

The simplicial complex $X$ is said to be \emph{pure} of dimension $d$
($0\leq d\in \Z$) if every face of $X$ is contained in a $d$-face.
We then say that $X$ is a \emph{$d$-complex} for short.
The $k$-dimesnional skeleton of a $d$-complex is a $k$-complex for
all $k\in\{0,\dots,d\}$.

\medskip

Suppose now that $X$ is a graph and let $Y\subseteq X$
be a subset. 
We say that $Y$
is   a \emph{cycle} of length $\ell$ ($\ell\geq 3$)
if $Y$ is a subgraph of $X$ that is isomorphic to the cycle graph
on $\ell$ vertices.
Given $x,y\in X(0)$ ($x=y$ is allowed), we say
that $Y$ is  a
\emph{closed   path}\footnote{
	Here, ``closed'' should be understood as topologically
	closed, rather than having the same start and end point.
} of length $\ell$ ($\ell\geq 0$) from $x$
to $y$ if $Y$ is a subgraph of $X$
such that
$x,y \in Y(0)$,
$Y(0)-\{x,y\}$
contains exactly
$\ell-1$ distinct $0$-faces  
$x_1,\dots,x_{\ell-1}$,
and $Y(1)$ consists of  exactly $\ell$ edges which are
$x\cup x_1,x_1\cup x_2,\dots,x_{\ell-2}\cup x_{\ell-1},x_{\ell-1}\cup y $.
An \emph{open path} from $x$ to $y$  
is a subset $Z\subseteq X$ of the form $Y-\{x,y\}$,
where $Y$ is a closed path from $x$ to $y$ in $X$;
the length of $Z$ is defined to be the length of $Y$. An open path $Z$ is   \emph{not}
a subgraph of $X$, but we shall still write $Z(i)=Z\cap X(i)$ for $i\in\{0,1\}$.

The following easy lemma, whose proof is left to the reader, will be needed
in the sequel.

\begin{lem}\label{LM:cycle-minus-subgraph}
Let $X$ be a graph, let $X'$ be a subgraph of $X$
and let $C$ be a cycle in $X$ meeting $X'$.
Then   $C-X'$ is a disoint union of open paths.
\end{lem}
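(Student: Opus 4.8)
\textbf{Proof plan for Lemma~\ref{LM:cycle-minus-subgraph}.}
The plan is to argue purely combinatorially about the cycle graph $C$ and the set of vertices it shares with $X'$. Write $C$ as a cyclic sequence of vertices $y_0, y_1, \dots, y_{\ell-1}$ (indices mod $\ell$) with edges $\{y_i, y_{i+1}\}$, and let $S = C(0) \cap X'(0)$ be the (nonempty, since $C$ meets $X'$) set of vertices of $C$ lying in $X'$. The first step is to understand exactly what $C - X'$ is as a subset of $X$: it consists of the vertices $C(0) \setminus X'(0)$ together with the edges of $C$ that are not edges of $X'$. The key observation is that removing from the cyclic vertex sequence the positions indexed by $S$ breaks the cycle into a collection of maximal ``arcs'' of consecutive vertices, one arc for each maximal run of consecutive indices not in $S$ (a cyclic gap between two consecutive elements of $S$). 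Each such arc, together with the edges of $C$ joining consecutive vertices within it, is an open path in the sense defined in \S\ref{subsec:complexes}: if the arc is $y_a, y_{a+1}, \dots, y_b$ (all of these outside $X'$, with $y_{a-1}, y_{b+1} \in X'$), then the corresponding closed path from $y_{a-1}$ to $y_{b+1}$ in $X$ has exactly those interior vertices and the $(b - a + 2)$ edges $\{y_{a-1},y_a\}, \{y_a, y_{a+1}\}, \dots, \{y_b, y_{b+1}\}$, so deleting the endpoints $y_{a-1}, y_{b+1}$ yields precisely the arc.

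The main things to verify, once this picture is set up, are: (i) these arcs are pairwise disjoint — immediate, since distinct maximal runs of non-$S$ indices are disjoint and the edges of $C$ within different runs are distinct; (ii) their union is all of $C - X'$ — every vertex of $C$ outside $X'$ lies in exactly one run, and every edge $\{y_i, y_{i+1}\}$ of $C$ that is not in $X'$ must have at least one endpoint outside $X'$ (indeed if both endpoints were in $X'(0)$ one still needs to know the edge is not in $X'$, so such edges are assigned to the run containing whichever endpoint is outside $S$; here one uses that $X'$ is a subgraph, so an edge of $C$ with both endpoints in $X'$ might or might not be in $X'$, but an edge with at least one endpoint outside $X'(0)$ is certainly not in $X'(1)$) — hence it belongs to the arc of that endpoint's run; and (iii) each arc really does satisfy the definition of an open path, which is the bookkeeping described above. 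One slightly delicate edge case is when $S = C(0)$, i.e.\ all vertices of $C$ lie in $X'$: then there are no runs, and $C - X'$ consists only of possibly some edges of $C$ not in $X'(1)$; but each such edge $\{y_i, y_{i+1}\}$ with both endpoints in $X'$ and the edge itself absent from $X'$ is an open path of length $2$ from $y_i$ to $y_{i+1}$ with no interior vertices, so $C - X'$ is still a disjoint union of (length-$2$, empty-interior) open paths. The honest way to handle this uniformly is to define the arcs not by runs of vertices but by maximal runs of edges of $C$ lying outside $X'(1)$, which automatically covers both cases.

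\textbf{Main obstacle.} The conceptual content is trivial; the only real work is pinning down the definitions precisely enough that ``arc of the cycle'' literally matches the paper's notion of an open path (a subset of the form $Y - \{x,y\}$ for a closed path $Y$), and making sure the degenerate cases — a run consisting of a single vertex, or the case $S = C(0)$ above where ``open paths'' of length $2$ with empty interior appear — are not silently excluded by the length conventions ($\ell \geq 0$ for closed paths, so length-$2$ with one interior vertex and length-$2$ with none when $x \neq y$ are both legal; one should double-check that a length-$2$ closed path from $x$ to $y$ with $x \ne y$ and no interior vertex is permitted, which it is since $\ell - 1 = 1$ interior vertices is required for $\ell = 2$ — so actually a bare edge $\{y_i,y_{i+1}\}\notin X'(1)$ is an open path of length $1$ from $y_i$ to $y_{i+1}$, not length $2$; I would recheck this indexing carefully against \S\ref{subsec:complexes} before writing the final version). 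So the obstacle is entirely one of careful case analysis and matching conventions, not of mathematical difficulty, which is presumably why the authors left it to the reader.
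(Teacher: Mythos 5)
The paper itself gives no argument here (the lemma is explicitly left to the reader), so your plan can only be judged on its own terms, and its overall shape --- cut the cycle at the faces it shares with $X'$ and check that each remaining piece is an open path carrying its two dangling end-edges --- is the right one. Two points as written, however, would fail and need repair. First, the coverage step contains a false assertion: an edge of $C$ can have both endpoints in $X'(0)$ and still fail to lie in $X'(1)$, so it is not true that every edge of $C$ not in $X'$ has an endpoint outside $X'$, and such an edge cannot be ``assigned to the run containing whichever endpoint is outside $S$'', because no such endpoint exists. You do meet these edges later, but only under the case $S=C(0)$; they occur just as well when $S\subsetneq C(0)$ (e.g.\ a square $y_0y_1y_2y_3$ with $X'$ containing the vertices $y_0,y_1$ but not the edge $\{y_0,y_1\}$), and in general each such edge must be recorded as its own open path of length $1$ --- your corrected indexing is right: a bare edge is $Y-\{x,y\}$ for a closed path $Y$ of length $1$, with $\ell-1=0$ interior vertices.

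Second, your proposed ``uniform'' repair --- taking maximal runs of edges of $C$ lying outside $X'(1)$ --- is itself incorrect: such a run may pass through a vertex of $X'(0)$ (two consecutive edges of $C$, neither in $X'(1)$, whose common vertex lies in $X'(0)$), and then the run is not a single open path, since that interior vertex is absent from $C-X'$; it is a union of two shorter ones. The clean uniform formulation is to take maximal runs of consecutive \emph{faces} of $C$ (vertices and edges alternating around the cycle) that lie in $C-X'$. Because $X'$ is a subcomplex, a vertex of $C$ outside $X'$ has both of its cycle-edges outside $X'$, so every maximal run begins and ends with an edge; because $C$ meets $X'$, at least one vertex of $C$ lies in $X'(0)$ (an edge in $C\cap X'$ brings its endpoints with it), so no run wraps all the way around; and a run $e_a,y_{a+1},e_{a+1},\dots,e_b$ is exactly the open path obtained from the closed path from $y_a$ to $y_{b+1}$ by deleting its endpoints, the degenerate possibilities (a run consisting of a single edge, or $y_a=y_{b+1}$ when $X'$ meets $C$ in a single vertex) being permitted by the paper's conventions, which allow $x=y$ for closed paths. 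With this replacement your plan goes through.
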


\subsection{Weights}
\label{subsec:weights}
 
A weight function on a $d$-complex $X$ is a function $w:X\to \R_+$
such that
\begin{enumerate}[label=(W\arabic*)]
\item  $\sum_{y\in X(d)} w(y)=1$,
\item  $w(x) =\schoose{d+1}{\dim x+1}^{-1} \sum_{y\in X(d)_{\supseteq x}} w(y)$ for all $x\in X$
with $\dim x<d$.
\end{enumerate}
We then say that $(X,w)$ is a \emph{weighted  $d$-complex};
if $d=1$ we say that $(X,w)$ is a \emph{weighted graph}.
If we   strengthen (W1) to   $w(y)=|X(d)|^{-1}$ for all $y\in X(d)$,
then this recovers the   weight functions considered in 
\cite{Lubotzky_2016_expansion_of_buildings} and \cite{Kaufman_2018_cosystolic_expanders}.

Given a weight   function $w:X\to \R_+$ and $A\subseteq X$, we write $w(A)=\sum_{a\in A}w(a)$.
A similar convention applies to subsets of $X_{\ord}(1)$, where the weight of an ordered
edge is defined to be the weight of its underlying unordered edge.

The conditions (W1) and (W2) imply that $w $ restricts to a probability measure on $X(d)$,
and that for  $x\in X(i)$ with $i\in \{-1,\dots,d-1\}$, 
the value  $w(x)$ is the probability of obtaining $x$ by choosing
a $d$-face $y\in X(d)$ according to $w$ and then choosing an $i$-face of $y$
uniformly at random. Consequently, $w(X(i))=1$ for all $i\in\{-1,\dots,d-1\}$.
It is also straightforward to check that (W2) implies
\begin{align}\label{EQ:weight-of-containing-cells}
w(X(\ell)_{\supseteq x})=\schoose{\ell+1}{k+1} w(x)
\end{align}
for all $-1\leq k\leq \ell\leq d$ and $x\in X(k)$.
As a result, if $(X,w)$ is a weighted $d$-complex,
then for every $k\in\{0,\dots,d\}$,
its $k$-dimensional skeleton $(X(\leq\! k),w|_{X(\leq k)})$ is  
a weighted $k$-complex.

\begin{example}\label{EX:regular-graph}
	(i) Let  $X$ be a $d$-complex. 
	The \emph{canonical weight function} $w_X:X\to \R_+$
	is defined
	by putting $w_X(y)=\frac{1}{|X(d)|}$ for all $y\in X(d)$ and defining $w_X$ on the other
	faces using the formula in  (W2).
	
	(ii) If  $X$ is a $k$-regular graph on $n$ vertices, then 
	$X$ is a pure $1$-dimensional simplicial complex, and
	the canonical weight
	function assigns every edge of $X$ the weight $\frac{2}{kn}$,
	every vertex of $X$ the weight $\frac{1}{n}$ and the value $1$ to the empty face.
	
	(iii) If $X$ is a  $d$-complex,
	and $0\leq k<d$, then $(X(\leq k),w_X|_{X(\leq k)})$
	is a weighted $k$-complex, but $w_X|_{X(\leq k)}$ is in general different
	from $w_{X(\leq k)}$, the canonical weight function of $X({\leq k})$.
\end{example}

\begin{remark}\label{RM:weights-a-la-Oppenheim}
	In \cite[Definition~2.1]{Oppenheim_2015_vanishing_of_cohomology} and \cite[Definition~3.2]{Horak_2013_spectra_of_Laplace_ops}, a balanced weight function
	on a $d$-complex is defined to be a function $m:X\to \R_+$
	such that $m(x)=(d-\dim x)! \cdot m(X(d)_{\supseteq x})$ for all $x\in X $.
	If $(X,w)$ in a weighted $d$-complex in our sense,
	then $m:X\to \R_+$ defined by $m(x)=\frac{(d+1)!}{(\dim x+1)!}w(x)$ is a balanced
	weight function in
	the sense of \cite{Oppenheim_2015_vanishing_of_cohomology} and
	\cite{Horak_2013_spectra_of_Laplace_ops}.
\end{remark}

\subsection{Expansion of Weighted Graphs}
\label{subsec:expansion}

Let $(X,w)$ be  a weighted graph. Given $i\in \{0,1\}$, write $C^i(X,\R)$ for the set of functions
$f:X(i)\to \R$. We endow $C^i(X,\R)$ with the inner product defined by
\begin{align*}
\Trings{f,g}=\frac{1}{(i+1)!}\sum_{x\in X(i)}f(x)g(x)w(x)
\end{align*}
for all $f,g\in C^i(X,\R)$. Given $A\subseteq X(0)$, we write $1_A$ for the function
in $C^0(X,\R)$ taking the value $1$ on $A$ and $0$ on $X(0)-A$.

The  \emph{weighted adjacency operator} of $(X,w)$,
denoted  $\calA=\calA_{X,w}$,
and the \emph{weighted Laplacian} of $(X,w)$,
denoted $\Delta=\Delta_{X,w}$,
are the linear operators from $ C^0(X,\R)$ to itself defined
by    
\begin{align*}
(\calA f)(v)&=\sum_{e\in X(1)_{\supseteq v}}\frac{w(e)}{2w(v)}f(e-v),\\
(\Delta f)(v)&=f(v)-\sum_{e\in X(1)_{\supseteq v}}\frac{w(e)}{2w(v)}f(e-v),
\end{align*}
for all $f\in C^0(X,\R)$ and $v\in X(0)$. We have $\calA =\id-\Delta$.
For example, if $X$ is a $k$-regular graph and $w$ is the canonical weight function of $X$,
then $\calA_{X,w}$ is the ordinary adjacency operator of $X$ scaled
by $\frac{1}{k}$, cf.\ Example~\ref{EX:regular-graph}(ii).

Fix a  linear ordering $L$ on $V(X)$. Given an edge $e=\{u,v\}\in X(1)$
with $u<v$ relative to $L$,
we write $e^+=\{v\}$ and $e^-=\{u\}$. The \emph{$0$-coboundary} map is the linear operator
$d_0=d_0^L:C^0(X,\R)\to C^1(X,\R)$ defined by
\begin{align*}
(d_0 f)(e)=f(e^+)-f(e^-),
\end{align*}
for all $f\in C^0(X,\R)$, $e\in X(1)$.
The  weighted  \emph{$1$-boundary} map is the dual operator 
$d_0^*:C^1(X,\R)\to C^0(X,\R)$ relative to the inner products
of $C^0(X,\R)$ and $C^1(X,\R)$.

\begin{lem}\label{LM:adjacency-basic-properties}
	Under the previous assumptions:
	\begin{enumerate}[label=(\roman*)]
		\item $\Delta= d_0^*d_0$. In particular, $\Delta$ is positive semidefinite and
		$\calA=\id-\Delta$ is self-adjoint.
		\item $ \Spec \Delta\subseteq [0,2]$ and $ \Spec \calA\subseteq [-1,1]$.
		\item 
		$\Delta 1_{X(0)}=0$ and $\calA 1_{X(0)}=1_{X(0)}$.
	\end{enumerate}
\end{lem}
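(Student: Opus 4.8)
The plan is to establish (i) first by a direct computation of the adjoint $d_0^*$, then derive (ii) and (iii) as formal consequences. For (i), I would compute $d_0^*$ explicitly from the defining relation $\Trings{d_0 f, g}_{C^1} = \Trings{f, d_0^* g}_{C^0}$ for all $f \in C^0(X,\R)$, $g \in C^1(X,\R)$. Expanding the left side using the inner product on $C^1$ (which carries the factor $\frac{1}{2!}$) and the definition $(d_0 f)(e) = f(e^+) - f(e^-)$, one groups the resulting sum by vertices: each vertex $x$ collects contributions from the edges containing it, with a sign depending on whether $x = e^+$ or $x = e^-$. Reading off the coefficient of $f(x)$ gives the formula $(d_0^* g)(x) = \sum_{e \in X(1)_{\supseteq x}} \frac{w(e)}{2w(x)} \, \sigma(x,e)\, g(e)$ where $\sigma(x,e) = \pm 1$ according to the orientation; the precise signs will cancel when we compose. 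Then $(d_0^* d_0 f)(x) = \sum_{e \in X(1)_{\supseteq x}} \frac{w(e)}{2w(x)}(f(x) - f(e - x))$, and since $w(X(1)_{\supseteq x}) = 2 w(x)$ by \eqref{EQ:weight-of-containing-cells} (with $k=0$, $\ell=1$), the diagonal term sums to $f(x)$, yielding exactly $(\Delta f)(x)$. Hence $\Delta = d_0^* d_0$; positive semidefiniteness of $\Delta$ is immediate since $\Trings{\Delta f, f} = \Trings{d_0 f, d_0 f} = \norm{d_0 f}^2 \geq 0$, and self-adjointness of $\calA = \id - \Delta$ follows because $\id$ and $\Delta = d_0^* d_0$ are both self-adjoint.

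For (ii), the lower bound $\Spec \Delta \subseteq [0,\infty)$ is just positive semidefiniteness from (i). For the upper bound $\Spec \Delta \subseteq [0,2]$, the cleanest route is to bound $\Trings{\Delta f, f} = \norm{d_0 f}^2$ in terms of $\norm{f}^2$: expand $\norm{d_0 f}^2 = \frac{1}{2}\sum_{e \in X(1)} (f(e^+) - f(e^-))^2 w(e) \leq \frac{1}{2}\sum_{e} 2\big(f(e^+)^2 + f(e^-)^2\big) w(e) = \sum_{e}\big(f(e^+)^2 + f(e^-)^2\big)w(e)$, and then rearrange this last sum by vertices: each vertex $x$ contributes $f(x)^2 \sum_{e \in X(1)_{\supseteq x}} w(e) = 2 w(x) f(x)^2$, giving a total of $2 \sum_x f(x)^2 w(x) = 2 \norm{f}^2$. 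So $0 \leq \Trings{\Delta f, f} \leq 2\norm{f}^2$ for all $f$, hence $\Spec \Delta \subseteq [0,2]$, and the relation $\calA = \id - \Delta$ translates this to $\Spec \calA \subseteq [-1,1]$.

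For (iii), observe that $d_0 1_{X(0)} = 0$ directly from the definition, since $(d_0 1_{X(0)})(e) = 1 - 1 = 0$ for every edge $e$; therefore $\Delta 1_{X(0)} = d_0^* d_0 1_{X(0)} = 0$, and $\calA 1_{X(0)} = (\id - \Delta) 1_{X(0)} = 1_{X(0)}$. I expect the only mildly delicate point to be bookkeeping the orientation signs and the $\frac{1}{(i+1)!}$ normalization factors when computing $d_0^*$ in (i); once that formula is in hand, everything else is formal. This is entirely routine, so I would state the adjoint formula, verify the diagonal sum via \eqref{EQ:weight-of-containing-cells}, and leave the remaining manipulations to the reader.
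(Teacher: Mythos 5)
Your proposal is correct, and all the computations check out: the explicit formula for $d_0^*$ (with the orientation signs cancelling against those in $d_0$ exactly as you predict), the use of $w(X(1)_{\supseteq x})=2w(x)$ from (W2), and the vertex-rearrangement bound $\Trings{\Delta f,f}\leq 2\norm{f}^2$ are all sound. Your route differs from the paper's in two places. For (i), the paper does not compute $d_0^*$ at all; it simply cites Oppenheim's Proposition~2.11 (translating weights via Remark~\ref{RM:weights-a-la-Oppenheim}), so your self-contained derivation of the adjoint is more explicit than what appears in the text, and it is a perfectly good substitute. For (ii), the paper proves $\norm{\calA}\leq 1$ directly by applying Jensen's inequality to the averaging $\sum_{e\in X(1)_{\supseteq x}}\frac{w(e)}{2w(x)}=1$ and estimating $\norm{\calA f}^2\leq\norm{f}^2$, whereas you bound the quadratic form of $\Delta$ by $2\norm{f}^2$ using $(a-b)^2\leq 2(a^2+b^2)$; these are essentially the same convexity fact packaged differently, and both yield $\Spec\calA\subseteq[-1,1]$ via $\calA=\id-\Delta$. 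For (iii) the paper computes $(\calA 1_{X(0)})(x)=1$ directly rather than going through $d_0 1_{X(0)}=0$, but this is immaterial. In short: your argument buys self-containedness (no external citation for the adjoint identity) at the cost of the sign bookkeeping you flag, while the paper's version is shorter by outsourcing (i) and estimating the operator norm of $\calA$ instead of the quadratic form of $\Delta$.
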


\begin{proof}
	(i) This follows from \cite[Proposition~2.11]{Oppenheim_2015_vanishing_of_cohomology}.
	(Consult Remark~\ref{RM:weights-a-la-Oppenheim}. For a general weighted $d$-complex
	$(X,w)$, the inner product of $C^i(X,\R)$ used in {\it op.~cit.} is
	given by $\Trings{f,g}=\frac{(d+1)!}{(i+1)!}\sum_{x\in X(i)}f(x)g(x)w(x)$.
	The Laplacian $\Delta$ is denoted $\Delta^+_0$ in {\it op.~cit.})
	
	(ii) By (i), it is enough to prove that $\|\calA\|\leq 1$.
	Let $f\in C^0(X,\R)$ and $v\in X(0)$.
	By (W2), $\sum_{e\in X(1)_{\supseteq v}} \frac{w(e)}{2w(v)}=\frac{2w(v)}{2w(v)}=1$.
	Now, by Jensen's inequality,
	\[\left[\sum_{e\in X(1)_{\supseteq v}} \frac{w(e)}{2w(v)}f(e-v)\right]^2
	\leq \sum_{e\in X(1)_{\supseteq v}} \frac{w(e)}{2w(v)} f(e-v)^2 .\]
	Using this, we get
	\begin{align*}
	\|\calA f\|^2
	&=
	\sum_{v\in X(0)} w(v) \left[\sum_{e\in X(1)_{\supseteq v}} \frac{w(e)}{2w(v)}f(e-v)\right]^2
	\\
	&\leq \sum_{v\in X(0)} w(v) \sum_{e\in X(1)_{\supseteq v}}\frac{w(e)}{2w(v)} f(e-v)^2   
	=
	\sum_{v\in X(0)} \sum_{e\in X(1)_{\supseteq v}}  \frac{w(e)}{2 }f(e-v)^2
	\\
	&=\sum_{u\in X(0)}  \sum_{e\in X(1)_{\supseteq u}} \frac{w(e)}{2 }f(u)^2
	=\sum_{u\in X(0)}w(u)f(u)^2=\|f\|^2,
	\end{align*}
	which is what we want.

	(iii) Let $v\in X(0)$. Then $(\calA 1_{X(0)})(v)=\sum_{e\in X(0)_{\supseteq v}} \frac{w(e)}{2w(v)}=1=1_{X(0)}(v)$. This implies that $\Delta(1_{X(0)})=1_{X(0)}-\calA 1_{X(0)}=0$.
\end{proof}

	Let $(X,w)$ be a weighted graph.
	We define 
	\[
	C^0_{\circ}(X,\R)=1_{X(0)}^\perp =\{f\in C^0(X,\R)\suchthat \sum_{v\in X(0)}w(v) f(v)=0\}.
	\]
	By Lemma~\ref{LM:adjacency-basic-properties},
	$\Delta$ and $\calA$ take $C^0_{\circ}(X,\R)$ to itself. 
	Given $\mu\leq \lambda$ in $\R$, we say that $(X,w)$ is a \emph{ $[\mu,\lambda]$-spectral expander},
	or just a $[\mu,\lambda]$-expander for short,
	if $\Spec (\calA|_{C^0_{\circ}(X,\R)})\subseteq [\mu,\lambda]$.
	We also write $\lambda(X,w)$ for the smallest
	$\lambda\geq 0$ such that $\Spec(\calA|_{C^0_{\circ}(X,\R)})\subseteq [\lambda,-\lambda]$.

	\begin{remark}
		(i) At this level of generality, a weighted graph
		can be a $[\mu,\lambda]$-expander even
		when both $\mu$ and $\lambda$ are negative.
		For example, consider a complete graph $X$ on $n+1$ vertices with its canonical
		weight function $w=w_X$. It is well-known that eigenvalues of $\calA_{X,w}$
		are $1,-\frac{1}{n},\dots,-\frac{1}{n}$ (including multiplicities),
		so $(X,w)$ is a $[-\frac{1}{n},-\frac{1}{n}]$-expander. 
		
		(ii) We shall see below that if $(X,w)$ is a bipartite weighted
		graph, then $-1\in\Spec \calA_{X,w}$, so such a weighted
		graph is a $[\mu,\lambda]$-expander
		only if $\mu\leq -1$. In this case, we also have $\lambda(X,w)=1$. 
	\end{remark}

	We now recall Kaufman and Oppenheim's
	version of the Cheeger Inequality for graphs 
	\cite[Theorem~4.4(1)]{Kaufman_2021_amplified_local_testability_preprint},
	which	
	relates  $[-1,\lambda]$-expansion and 
	the Cheeger constant $h(X,w)$  (see the Introduction). 
	To that end, it is convenient to introduce
	the following variation on the Cheeger constant $h(X,w)$, namely,
	\[
		h'(X,w)=\min_{\emptyset\neq A\subsetneq X(0)} \frac{w(E(A,X(0)-A))}{2w(A)w(X(0)-A)} 
	\]
	(this is denoted $h_G$ in {\it op.\ cit.}).
	Informally, $h'(X,w)$ is minimum possible ratio between  
	the weight of the edges leaving $A$, and the expected weight if $(X,w)$ were to behave
	like a random graph.
	Since $\min\{\alpha,1-\alpha\}\leq 2\alpha(1-\alpha)\leq 2\min\{\alpha,1-\alpha\}$
	for all $\alpha\in [0,1]$, we have
	\begin{equation}\label{EQ:h-h-prime-relation}
	\frac{1}{2}h(X,w)\leq h'(X,w)\leq h(X,w).
	\end{equation}

	\begin{thm}[{\cite[Theorem~4.4(1)]{Kaufman_2021_amplified_local_testability_preprint}}]
		\label{TH:Cheeger}
		Let $(X,w)$ be a weighted graph which is also a $[-1,\lambda]$-expander.
		Then 
		\[h'(X,w)\geq 1-\lambda.\]
		That is, for every  $A\subseteq X$,
		we have
		$
		w(E(A,X(0)-A))\geq (1-\lambda)\cdot 2 w(A)(1-w(A))
		$.
	\end{thm}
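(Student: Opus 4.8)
The plan is to reduce the statement to a standard Cheeger-type bound for the Laplacian $\Delta = d_0^* d_0$ of the weighted graph, and then to derive the inequality $h'(X,w) \geq 1-\lambda$ by testing the Rayleigh quotient of $\Delta$ against the right class of functions. Since the theorem is attributed verbatim to Kaufman--Oppenheim, the argument I would write is essentially an exposition of their proof adapted to the normalization conventions of \S\ref{subsec:weights}. First, fix a nonempty proper $A \subsetneq X(0)$ and set $\alpha = w(A)$, so $w(X(0)-A) = 1-\alpha$ by (W1). The natural test function is the ``centered indicator'' $f = 1_A - \alpha \cdot 1_{X(0)}$, which lies in $C^0_\circ(X,\R)$ by construction. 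One computes $\|f\|^2 = \alpha(1-\alpha)^2 + (1-\alpha)\alpha^2 = \alpha(1-\alpha)$, and $\|d_0 f\|^2 = \Trings{d_0 f, d_0 f}$; since $(d_0 f)(e) = f(e^+)-f(e^-)$ equals $\pm 1$ exactly on the edges of $E(A, X(0)-A)$ and $0$ elsewhere, this gives $\|d_0 f\|^2 = \tfrac{1}{2} w(E(A,X(0)-A))$ (the factor $\tfrac12$ coming from the normalization of the inner product on $C^1(X,\R)$).

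Next, since $(X,w)$ is a $[-1,\lambda]$-expander, we have $\Spec(\calA|_{C^0_\circ(X,\R)}) \subseteq [-1,\lambda]$, hence $\Spec(\Delta|_{C^0_\circ(X,\R)}) \subseteq [1-\lambda, 2]$ because $\Delta = \id - \calA$ (Lemma~\ref{LM:adjacency-basic-properties}). In particular the smallest eigenvalue of $\Delta$ restricted to $C^0_\circ(X,\R)$ is at least $1-\lambda$, so for our $f \in C^0_\circ(X,\R)$,
\[
\Trings{\Delta f, f} \;\geq\; (1-\lambda)\,\|f\|^2.
\]
On the other hand $\Trings{\Delta f, f} = \Trings{d_0^* d_0 f, f} = \|d_0 f\|^2$ by part (i) of Lemma~\ref{LM:adjacency-basic-properties}. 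Combining the three displayed computations,
\[
\tfrac{1}{2}\, w(E(A,X(0)-A)) \;=\; \|d_0 f\|^2 \;=\; \Trings{\Delta f, f} \;\geq\; (1-\lambda)\,\alpha(1-\alpha),
\]
i.e.\ $w(E(A,X(0)-A)) \geq (1-\lambda)\cdot 2 w(A)(1-w(A))$. Dividing by $2w(A)w(X(0)-A)$ and taking the minimum over all nonempty proper $A$ yields $h'(X,w) \geq 1-\lambda$, which is the assertion; the final sentence of the theorem is just the same inequality written without the normalizing denominator.

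The only genuinely delicate points are bookkeeping ones rather than conceptual obstacles: one must track the combinatorial factors $\tfrac{1}{(i+1)!}$ in the inner products of $C^0$ and $C^1$ so that $\|d_0 f\|^2$ really equals $\tfrac12 w(E(A,X(0)-A))$ under the chosen normalization, and one must confirm that $f = 1_A - \alpha 1_{X(0)}$ indeed lands in $C^0_\circ(X,\R)$ using $w(X(0))=1$. The spectral input — that $1-\lambda$ is a lower bound for $\Delta$ on the orthogonal complement of the constants — is immediate from $\Delta = \id - \calA$ and the definition of $[-1,\lambda]$-expander, so no extra work is needed there. Hence the proof is short once the normalization is pinned down; I would present it as the three-line chain of (in)equalities above, preceded by the two Rayleigh-quotient computations.
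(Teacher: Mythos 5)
Your proof is correct: the Rayleigh-quotient computation with the centered indicator $f=1_A-\alpha 1_{X(0)}$, the identity $\|d_0f\|^2=\tfrac12 w(E(A,X(0)-A))$ under the paper's inner-product normalization, and the bound $\Spec(\Delta|_{C^0_{\circ}(X,\R)})\subseteq[1-\lambda,2]$ coming from $\Delta=\id-\calA$ fit together exactly as you describe, and the trivial cases $A=\emptyset$, $A=X(0)$ are harmless. The paper itself gives no proof of Theorem~\ref{TH:Cheeger} --- it is quoted from Kaufman--Oppenheim --- so your argument simply supplies the standard spectral proof of the cited result, consistent with the conventions of \S\ref{subsec:weights} and \S\ref{subsec:expansion}.
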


	A theorem of Friedland and Nabben \cite[Theorem~2.1]{Friedland_2002_Cheeger_type_ineqs}
	implies  a converse to the theorem, namely,
	if $h(X,w)\geq \veps$, then $(X,w)$ is a 
	$[-1,\sqrt{1-\frac{\veps^2}{4}}]$-expander.
	(Specifically, assuming $V(X)=\{1,\dots,n\}$,
	apply\cite[Theorem~2.1]{Friedland_2002_Cheeger_type_ineqs}
	with $w_{i,j}=w(\{i,j\})$ and  $d_i=2w(\{i\})$.
	The numbers $\delta_i$ defined {\it op.\ cit.} are $2w(\{i\})$
	in our notation, and the constant $i(X,w)$ considered there equals $\frac{1}{2}h(X,w)$
	in our notation.)

\subsection{Partite Simplicial Complexes}
\label{subsec:partite-complexes}

	Recall that an $(r+1)$-partite simplicial complex
	is a tuple $(X,V_0,\dots,V_r)$
	such that $X$ is a simplicial complex, $V_0,\dots,V_r$
	is a partition of the set of vertices $V(X)$ (in particular, $V_i\neq \emptyset$
	for all $i$), and every face $x\in X$ contains at most one vertex
	from each $V_i$, i.e., $|x\cap V_i|\leq 1$ for all $i\in\{0,\dots,r\}$.
	We then write $X_{\{i\}}$ for the set of $0$-faces having their vertex in $V_i$.
	We say that $(X,V_0,\dots,V_r)$ is pure if every face of $X$ is contained in an
	$(r+1)$-face; in this case $\dim X=r $.
	A bipartite graph is just $2$-partite simplicial complex.	
	
	A \emph{weighted $(r+1)$-partite simplicial} complex
	is a tuple $(X,w,V_0,\dots,V_r)$ such that $(X,V_0,\dots,V_r)$
	is a pure $(r+1)$-partite simplicial complex and $w$ is a weight function on
	the $(r+1)$-complex $X$.
	
	When there is no risk of confusion, we will  suppress the partition
	$(V_0,\dots,V_r)$ from the notation, writing simply that $X$ is an $(r+1)$-partite complex,
	or that $(X,w)$ is a weighted $(r+1)$-partite simplicial complex.

	\begin{lem}\label{LM:partite-basis-props}
		Let $(X,w,V_0,\dots,V_r)$ be a weighted   $(r+1)$-partite simplicial complex.
		Then: 
		\begin{enumerate}[label=(\roman*)]
			\item 
			$w(X_{\{i\}})=\frac{1}{r+1}$ for all $i\in\{0,\dots,r\}$.
			\item 
			$w(E(X_{\{i\}},X_{\{j\}}))=\frac{2}{r(r+1)}$ for all distinct $i,j\in \{0,\dots,r\}$.		
			\item The subspace $L$ of $C^0(X,\R)$ spanned by 
			$1_{X_{\{0\}}},\dots,1_{X_{\{r\}}}$ 
			is invariant under $\calA=\calA_{X,w}$.
			The $r+1$ eigenvalues of $\calA$ on this subspace
			are $1,-\frac{1}{r},\dots,-\frac{1}{r}$ (including multiplicities).
			If the link  $X_z$ is connected for all 
			$z\in X({\leq r-2})$, then 
			$L$ is the sum of the $1$-eigenspace
			and $-\frac{1}{r}$-eigenspace of $\calA$.
		\end{enumerate}
	\end{lem}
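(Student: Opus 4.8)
The plan is to prove (i) and (ii) by double counting against the $r$-faces, and (iii) by an explicit computation of $\calA 1_{X_{\{i\}}}$ together with a spectral argument. Throughout, the key fact about the partition is that, $X$ being a pure $(r+1)$-partite complex, every $r$-face contains \emph{exactly one} vertex from each $V_i$. For (i), I would use \eqref{EQ:weight-of-containing-cells} in the form $w(x)=\frac{1}{r+1}w(X(r)_{\supseteq x})$ for $x\in X(0)$, sum over $x\in X_{\{i\}}$, and interchange the order of summation: the result is $\frac{1}{r+1}\sum_{y\in X(r)}w(y)\cdot\#\{x\in X_{\{i\}}:x\subseteq y\}=\frac{1}{r+1}\sum_{y\in X(r)}w(y)=\frac{1}{r+1}$, since the inner count is always $1$ and $w(X(r))=1$. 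Part (ii) is the same computation one dimension up: $w(e)=\binom{r+1}{2}^{-1}w(X(r)_{\supseteq e})$, and each $r$-face contains exactly one edge of $E(X_{\{i\}},X_{\{j\}})$, namely the one joining its $V_i$-vertex to its $V_j$-vertex, so $w(E(X_{\{i\}},X_{\{j\}}))=\binom{r+1}{2}^{-1}=\frac{2}{r(r+1)}$.

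For the invariance and the spectrum on $L$ in (iii), I would compute $\calA 1_{X_{\{i\}}}$ pointwise. If $x\in X_{\{i\}}$ then no edge at $x$ has its other endpoint in $V_i$, so $(\calA 1_{X_{\{i\}}})(x)=0$. If $x\in X_{\{j\}}$ with $j\neq i$, then $(\calA 1_{X_{\{i\}}})(x)=\frac{1}{2w(x)}\,w(E(\{x\},X_{\{i\}}))$, and the double count of (ii) localized at the $r$-faces through $x$ gives $w(E(\{x\},X_{\{i\}}))=\binom{r+1}{2}^{-1}w(X(r)_{\supseteq x})=\binom{r+1}{2}^{-1}(r+1)w(x)=\frac{2}{r}w(x)$, whence $(\calA 1_{X_{\{i\}}})(x)=\frac1r$. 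Thus $\calA 1_{X_{\{i\}}}=\frac1r\bigl(1_{X(0)}-1_{X_{\{i\}}}\bigr)\in L$, so $L$ is $\calA$-invariant. Since the $X_{\{i\}}$ are nonempty and partition $X(0)$, the vectors $1_{X_{\{0\}}},\dots,1_{X_{\{r\}}}$ form a basis of $L$, and in this basis $\calA|_L$ has matrix $\frac1r(J-I)$ with $J$ the all-ones matrix; its spectrum is $\{1\}\cup\{-\tfrac1r\ (r\ \text{times})\}$, with $1_{X(0)}$ spanning the $1$-eigenspace and $\{\sum_i\alpha_i 1_{X_{\{i\}}}:\sum_i\alpha_i=0\}$ the $-\tfrac1r$-eigenspace of $\calA|_L$.

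For the last assertion, since $\calA$ is self-adjoint (Lemma~\ref{LM:adjacency-basic-properties}) and $L$ is $\calA$-invariant, so is $L^\perp$, and it suffices to show $\calA|_{L^\perp}$ has neither $1$ nor $-\tfrac1r$ as an eigenvalue. For $1$: the connectivity hypothesis applied to $z=\emptyset$ says $X$ is connected, and $\calA$ is an averaging operator with nonnegative coefficients summing to $1$, so by the maximum principle its $1$-eigenspace is $\R\cdot 1_{X(0)}\subseteq L$. For $-\tfrac1r$, which is the heart of the matter: suppose $\calA f=-\tfrac1r f$. Expanding the nonnegative quantity $\sum_{y\in X(r)}w(y)\bigl(\sum_{v\in y}f(v)\bigr)^2$ and applying \eqref{EQ:weight-of-containing-cells}, the diagonal terms sum to $(r+1)\|f\|^2$ while the cross terms sum to $2\binom{r+1}{2}\langle\calA f,f\rangle=r(r+1)\cdot(-\tfrac1r)\|f\|^2$, and the two cancel; since every $r$-face carries positive weight, this forces $\sum_{v\in y}f(v)=0$ for every $y\in X(r)$. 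Now if two $r$-faces $y,y'$ share an $(r-1)$-face $z$, which necessarily misses exactly one part $V_m$, then $y$ and $y'$ agree outside $V_m$, and $\sum_{v\in y}f(v)=0=\sum_{v\in y'}f(v)$ forces $f$ to take the same value on the $V_m$-vertex of $y$ and of $y'$; hence $f$ takes the same value on the $V_k$-vertex of $y$ as on the $V_k$-vertex of $y'$, for every $k$. The hypothesis that $X_z$ is connected for all $z\in X({\leq}\,r-2)$ is precisely the standard sufficient condition for $X$ to be strongly (gallery-)connected, i.e.\ any two $r$-faces are joined by a chain of $r$-faces in which consecutive ones share an $(r-1)$-face; propagating the previous observation along such a chain shows that $f$ is constant on each $X_{\{k\}}$, so $f\in L$ and therefore $f\in L\cap L^\perp=0$. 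Consequently the $1$- and $-\tfrac1r$-eigenspaces of $\calA$ both lie in $L$, and by the previous paragraph they span it.

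The step I expect to be the main obstacle is the $-\tfrac1r$ case just discussed: it rests on the quadratic-form identity that pins down $\sum_{v\in y}f(v)=0$, together with the classical fact that a pure complex with connected codimension-$\geq 2$ links is strongly connected. For $r=1$ the latter is merely connectivity of a bipartite graph, but for $r\geq 2$ it is the usual gallery-connectedness argument (induction on dimension through the vertex links $X_u$, which are again pure $r$-partite complexes satisfying the same hypotheses), and it is the one external input the argument genuinely needs.
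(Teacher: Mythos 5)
Your handling of (i) and (ii) is exactly the paper's: the paper shows, for any $I\subseteq\{0,\dots,r\}$, that $w(X_I)=\binom{r+1}{|I|}^{-1}$ by the same interchange of summation, using that every top-dimensional face contains exactly one face of each type, and then specializes to $|I|=1,2$. For (iii) you genuinely diverge: the paper simply invokes Oppenheim's Proposition~5.2 and its proof, whereas you give a self-contained argument --- the pointwise computation $\calA 1_{X_{\{i\}}}=\frac1r\bigl(1_{X(0)}-1_{X_{\{i\}}}\bigr)$ (which is just (i)--(ii) localized at a vertex), identification of $\calA|_L$ with $\frac1r(J-I)$, a maximum principle for the eigenvalue $1$, and the identity $\sum_{y\in X(r)}w(y)\bigl(\sum_{v\in y}f(v)\bigr)^2=(r+1)\|f\|^2+r(r+1)\langle\calA f,f\rangle$ to force $\sum_{v\in y}f(v)=0$ for a $-\frac1r$-eigenfunction, propagated by gallery-connectedness. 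This is correct and is essentially the content of the cited result; what your route buys is a proof readable inside the paper, while the citation buys brevity and defers to a statement proved in Oppenheim's general weighted framework. Two remarks, neither a real gap: your assertions ``every $r$-face carries positive weight'' and the maximum-principle step require strict positivity of the weights, which the paper's definition of a weight function does not formally guarantee (it allows zero weights, under which the last claim of (iii) can actually fail); this positivity is an implicit standing hypothesis shared by the lemma and by Oppenheim's setting, so you should state it as an assumption rather than a fact. And the gallery-connectedness of a pure complex whose links in codimension $\geq 2$ are connected, which you rightly isolate as the one external input, is indeed standard and is proved precisely by the induction on vertex links you sketch, so that step is sound.
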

	
	\begin{proof}
		We prove (i) and (ii) together.
		Given $I\subseteq \{0,\dots,r\}$, let $X_I$ denote
		the set of $(|I|-1)$-faces of $X$ having 
		a vertex in $V_i$ for all $i\in I$. 
		We claim that $w(X_I)=\schoose{r+1}{|I|}^{-1}$. Indeed,
		\begin{align*}
		w(X_{I})
		&=\sum_{x\in X_{I}} w(x)=
		\sum_{x\in X_{I}} \schoose{r+1}{|I|}^{-1}\sum_{y\in X(r+1)_{\supseteq x}} w(y)
		\\
		&=
		\schoose{r+1}{|I|}^{-1} \sum_{y\in X(r+1)} \sum_{x\in (X_{I})_{\subseteq y}} w(y) 
		=
		\schoose{r+1}{|I|}^{-1} \sum_{y\in X(r+1)} 1\cdot w(y)\\
		&=
		\schoose{r+1}{|I|}^{-1}  w(X(r+1))=\schoose{r+1}{|I|}^{-1} ,
		\end{align*}
		where the fourth equality holds because every $(r+1)$-face 
		contains exactly one face in $X_I$. (i) and (ii)
		now follow    by taking $I=\{i\}$ and $I=\{i,j\}$, respectively.
		
		Part (iii) follows from  \cite[Proposition~5.2]{Oppenheim_2015_vanishing_of_cohomology}
		and its proof.
	\end{proof}
	
	Given a weighted pure $(r+1)$-partite simplicial complex $(X,w,V_0,\dots,V_r)$,
	we 
	write $C^0_{\diamond}(X,\R)$ for $L^\perp$, where $L=\Span_{\R}\{1_{X_{\{0\}}},\dots,1_{X_{\{r\}}}\}$.
	In view of Lemma~\ref{LM:partite-basis-props}(iii),
	when regarding the spectrum of $\calA=\calA_{X,w}$ on $C^0(X,\R)$,
	it is reasonable to set aside the eigenvalues occurring on the subspace $L$.
	Thus,  
	we say that $(X,w,V_0,\dots,V_r)$
	is an \emph{
	$(r+1)$-partite $[\mu,\lambda]$-expander} if the spectrum
	of $\calA$ on $C_{\diamond}^0(X,\R)$ is contained in the interval $[\mu,\lambda]$.

	Oppenheim 
	\cite[Lemma~5.5]{Oppenheim_2015_vanishing_of_cohomology}\footnote{
		There is a typo in this source: the last inequality
		should be ``$1+\frac{1}{n}(1-\lambda(X))\leq \kappa(X)\leq 1+n(1-\lambda(X))$''.		
	}	
	showed that if $(X,w)$ is a weighted $(r+1)$-partite $[-1,\lambda]$-expander with $0\leq \lambda\leq \frac{1}{r}$,
	then $(X,w)$ is actually an $r$-partite $[-r\lambda,\lambda]$-expander.
	When, $r=1$, i.e., when $(X,w)$ is a weighted bipartite graph, it is further
	known that  the (multi-)set $\Spec \calA_{X,w}$ is symmetric around $0$;
	this follows from the following  well-known lemma.
	
	\begin{lem}\label{LM:bipartite-sym-spec}
		Let $(X,w,V_0,V_1)$ be a weighted bipartite graph,
		let $\lambda\in \R$ and let $f\in C^0_\diamond(X,\R)$
		be a $\lambda$-eigenfunction of $\calA=\calA_{X,w}$.
		Define $f'\in C^0_\diamond(X,\R)$
		by $f'(v)=f(v)$ if $v\in  X_{\{0\}}$ and $f'(v)=-f(v)$
		otherwise. Then $\calA f'=-\lambda f'$.
	\end{lem}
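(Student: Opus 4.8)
The plan is to verify the eigenvalue equation $\calA f'=-\lambda f'$ by a pointwise computation, the only input being the defining feature of a bipartite graph: every edge joins a vertex of $X_{\{0\}}$ to a vertex of $X_{\{1\}}$. It is convenient to introduce the sign function $\sigma\colon X(0)\to\{\pm1\}$ with $\sigma(x)=1$ for $x\in X_{\{0\}}$ and $\sigma(x)=-1$ for $x\in X_{\{1\}}$, so that $f'$ is simply the pointwise product $\sigma f$. The first step is to record the key consequence of bipartiteness: if $e\in X(1)$ and $x$ is a vertex of $e$, then the other vertex $e-x$ lies in the part opposite to the one containing $x$, whence $\sigma(e-x)=-\sigma(x)$.

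Next, fix an arbitrary $x\in X(0)$ and expand, using the definition of $\calA=\calA_{X,w}$,
\[
(\calA f')(x)=\sum_{e\in X(1)_{\supseteq x}}\frac{w(e)}{2w(x)}f'(e-x)=\sum_{e\in X(1)_{\supseteq x}}\frac{w(e)}{2w(x)}\sigma(e-x)\,f(e-x).
\]
By the observation above, the factor $\sigma(e-x)=-\sigma(x)$ is constant over the sum, so it pulls out, giving $(\calA f')(x)=-\sigma(x)\,(\calA f)(x)=-\sigma(x)\lambda f(x)=-\lambda f'(x)$. Since $x$ was arbitrary, this yields $\calA f'=-\lambda f'$.

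Finally, I would check that $f'$ actually belongs to $C^0_{\diamond}(X,\R)=L^\perp$, where $L=\Span_{\R}\{1_{X_{\{0\}}},1_{X_{\{1\}}}\}$: since $1_{X_{\{i\}}}$ is supported on $X_{\{i\}}$ and $f'$ equals $\pm f$ there, we get $\Trings{f',1_{X_{\{i\}}}}=\pm\Trings{f,1_{X_{\{i\}}}}=0$ for $i=0,1$, using the hypothesis $f\in C^0_{\diamond}(X,\R)$. Hence $f'\in C^0_{\diamond}(X,\R)$ and the proof is complete. I do not anticipate any genuine obstacle; the only point requiring care is the sign identity $\sigma(e-x)=-\sigma(x)$, which is precisely where bipartiteness is used and which fails for a general graph.
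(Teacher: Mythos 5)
Your proof is correct and is exactly the straightforward pointwise verification (sign flip across the bipartition plus the orthogonality check that $f'\in C^0_\diamond(X,\R)$) that the paper leaves to the reader.
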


\section{Mixing Lemmas for Weighted and Multipartite Graphs}
\label{sec:mixing}

In this section, we prove two   versions
of the
Expander Mixing Lemma (EML)  applying to  weighted
graphs, and $(r+1)$-partite weighted simplical complexes,
respectively.
Non-weighted and bipartite-non-weighted  versions
of the EML are well-known,   but
the  weighted and multi-partite versions that we establish 
here seem missing in the literature.
Both results will be needed in the sequel
to establish our main results about coboundary expansion.

Recall that given a weighted graph $(X,w)$ and $A\subseteq X(0)$,
we write $1_A\in C^0(X,\R)$ for the function taking the value $1$ on $A$
and $0$ elsewhere.

\begin{lem}\label{LM:inner-prod-formula}
	Let $(X,w)$ be a weighted graph and let $A,B\subseteq X(0)$.
	Then
	\[ \Trings{\calA_{X,w} 1_A,1_B}=\frac{1}{2}w(E_{\ord}(A,B)). \]
\end{lem}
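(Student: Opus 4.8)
The plan is a straightforward unwinding of the definitions followed by a re-indexing of the sum. First I would expand the left-hand side using the inner product on $C^0(X,\R)$, which for $0$-cochains reads $\Trings{f,g}=\sum_{x\in X(0)}f(x)g(x)w(x)$, together with the definition of $\calA_{X,w}$:
\[
\Trings{\calA_{X,w}1_A,1_B}=\sum_{x\in X(0)}w(x)\,1_B(x)\sum_{e\in X(1)_{\supseteq x}}\frac{w(e)}{2w(x)}\,1_A(e-x).
\]
The factor $w(x)$ cancels against $\frac{1}{2w(x)}$. Since $1_B(x)$ vanishes unless $x\in B$ and $1_A(e-x)$ vanishes unless the other endpoint $e-x$ of $e$ lies in $A$, the expression collapses to
\[
\frac{1}{2}\sum_{\substack{x\in B,\ e\in X(1)_{\supseteq x}\\ e-x\in A}}w(e).
\]

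Next I would identify this with a sum over ordered edges. A pair $(x,e)$ with $x\in B$, $x\subseteq e$ and $e-x\in A$ is exactly the datum of the ordered edge $(u,v):=(e-x,x)$, which has $\{u\}\in A$, $\{v\}\in B$, i.e.\ an element of $E_{\ord}(A,B)$; conversely an ordered edge $(u,v)\in E_{\ord}(A,B)$ produces $x:=v$, $e:=\{u,v\}$. This correspondence is a bijection, and $w(e)=w(\{u,v\})$ is precisely the weight assigned to the ordered edge $(u,v)$. Hence the displayed sum equals $\frac{1}{2}\sum_{(u,v)\in E_{\ord}(A,B)}w(\{u,v\})=\frac{1}{2}w(E_{\ord}(A,B))$, which is the claim.

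There is essentially no obstacle here; the only point requiring a moment's attention is the bookkeeping of the factor $\frac{1}{2}$ in light of the fact that $E_{\ord}(A,B)$ consists of \emph{ordered} edges, so an unordered edge with both endpoints in $A\cap B$ contributes two ordered edges — consistently with the two pairs $(x,e)$ it generates in the sum. (For later use one notes the specialization $B=X(0)-A$, combined with the symmetric count $E_{\ord}(X(0)-A,A)$, recovers $w(E(A,X(0)-A))$, but that is not needed for the statement itself.)
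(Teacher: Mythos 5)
Your proposal is correct and is essentially the paper's own proof: unfold the inner product and the definition of $\calA_{X,w}$, cancel $w(x)$, and identify the resulting double sum over pairs $(x,e)$ with the sum of $w$ over ordered edges in $E_{\ord}(A,B)$. The bookkeeping of the factor $\frac{1}{2}$ and the bijection with ordered edges are handled exactly as in the paper.
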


\begin{proof} By unfolding the definitions,
$\Trings{\calA 1_A,1_B}$ evaluates to  
	\begin{align*}
	\sum_{v\in X(0)} (\calA 1_A)(v)& 1_B(v) w(v) 
	=\sum_{v\in X(0)}\sum_{e\in X(1)_{\supseteq v}} \frac{w(e)}{2w(v)}1_A(e-v)1_B(v)w(v) 
	\\	
	&=\frac{1}{2}\sum_{v\in X(0)}\sum_{e\in X(1)_{\supseteq v}}1_A(e-v)1_B(v)w(e)
	=\frac{1}{2}w(E_{\ord}(A,B)). \qedhere
	\end{align*}
\end{proof}

\begin{thm}[Weighted Expander Mixing Lemma]
	\label{TH:EML}
	Let $(X,w)$ be a weighted graph 
	which is also a $[\mu,\lambda]$-expander ($-1\leq \mu\leq \lambda\leq 1$).
	Let $A,B\subseteq X(0)$ and put $\alpha=w(A)$, $\beta=w(B)$. Then:
	\begin{enumerate}[label=(\roman*)]
		\item $|\frac{1}{2}w(E_{\ord}(A,B))-\alpha\beta|
		\leq \max\{|\lambda|,|\mu|\}\sqrt{\alpha\beta(1-\alpha)(1-\beta)}$.
		\item $\mu \alpha (1-\alpha)
		\leq
		w(E(A))-\alpha^2
		\leq
		\lambda \alpha (1-\alpha)$.
	\end{enumerate}
\end{thm}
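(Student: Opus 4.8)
The plan is to reduce both statements to the inner-product identity of Lemma~\ref{LM:inner-prod-formula} together with the spectral behaviour of $\calA_{X,w}$ with respect to the orthogonal splitting $C^0(X,\R)=\R\cdot 1_{X(0)}\oplus C^0_\circ(X,\R)$. Write $\calA=\calA_{X,w}$. First I would record the elementary norm computations: for any $A\subseteq X(0)$ one has $\|1_A\|^2=\Trings{1_A,1_A}=\sum_{x\in X(0)}1_A(x)w(x)=w(A)=\alpha$, and since $\|1_{X(0)}\|^2=w(X(0))=1$, the orthogonal projection of $1_A$ onto $C^0_\circ(X,\R)$ is $1_A^\circ:=1_A-\alpha 1_{X(0)}$, with $\|1_A^\circ\|^2=\alpha-\alpha^2=\alpha(1-\alpha)$ by the Pythagorean identity. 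Likewise set $1_B^\circ:=1_B-\beta 1_{X(0)}$, so that $\|1_B^\circ\|^2=\beta(1-\beta)$.

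Next, using $\calA 1_{X(0)}=1_{X(0)}$ (Lemma~\ref{LM:adjacency-basic-properties}(iii)) and the fact that $\calA$ preserves $C^0_\circ(X,\R)$, which is orthogonal to $1_{X(0)}$, I would expand
\[
\Trings{\calA 1_A,1_B}=\Trings{\alpha 1_{X(0)}+\calA 1_A^\circ,\ \beta 1_{X(0)}+1_B^\circ}=\alpha\beta+\Trings{\calA 1_A^\circ,1_B^\circ},
\]
the two cross terms vanishing by orthogonality (and by $\calA 1_A^\circ\in C^0_\circ(X,\R)$). By Lemma~\ref{LM:inner-prod-formula} the left-hand side equals $\tfrac12 w(E_{\ord}(A,B))$, so $\tfrac12 w(E_{\ord}(A,B))-\alpha\beta=\Trings{\calA 1_A^\circ,1_B^\circ}$.

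For part (i), since $\calA|_{C^0_\circ(X,\R)}$ is self-adjoint with spectrum contained in $[\mu,\lambda]$, its operator norm is $\max\{|\mu|,|\lambda|\}$; Cauchy--Schwarz then gives $|\Trings{\calA 1_A^\circ,1_B^\circ}|\leq\max\{|\mu|,|\lambda|\}\,\|1_A^\circ\|\,\|1_B^\circ\|=\max\{|\mu|,|\lambda|\}\sqrt{\alpha\beta(1-\alpha)(1-\beta)}$. For part (ii) I would specialize $B=A$: each unordered edge inside $A$ accounts for two ordered edges of equal weight, so $\tfrac12 w(E_{\ord}(A,A))=w(E(A))$, and the displayed identity becomes $w(E(A))-\alpha^2=\Trings{\calA 1_A^\circ,1_A^\circ}$. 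Since $1_A^\circ\in C^0_\circ(X,\R)$ and $\Spec(\calA|_{C^0_\circ(X,\R)})\subseteq[\mu,\lambda]$, the Rayleigh-quotient bounds give $\mu\|1_A^\circ\|^2\leq\Trings{\calA 1_A^\circ,1_A^\circ}\leq\lambda\|1_A^\circ\|^2$, i.e.\ $\mu\alpha(1-\alpha)\leq w(E(A))-\alpha^2\leq\lambda\alpha(1-\alpha)$.

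There is no genuine obstacle here; the argument is a clean application of the spectral decomposition. The only points needing a little care are the bookkeeping for the factor of $\tfrac12$ relating ordered- and unordered-edge weights (supplied by Lemma~\ref{LM:inner-prod-formula}, and by the doubling when $B=A$) and the identity $\|1_A^\circ\|^2=\alpha(1-\alpha)$, which follows from $\|1_A\|^2=w(A)$ and orthogonality.
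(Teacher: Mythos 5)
Your proof is correct and follows essentially the same route as the paper: the same decomposition $1_A=\alpha 1_{X(0)}+1_A^\circ$, the same use of Lemma~\ref{LM:inner-prod-formula} to identify $\Trings{\calA 1_A,1_B}$ with $\tfrac12 w(E_{\ord}(A,B))$, and the same spectral (operator-norm and Rayleigh-quotient) bounds on $C^0_\circ(X,\R)$. The only cosmetic difference is that you obtain $\|1_A^\circ\|^2=\alpha(1-\alpha)$ via the Pythagorean identity rather than by direct summation.
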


\begin{proof}
	By assumption, the eigenvalues of   $\calA$ on $C^0_{\circ}(X,\R)$ live
	in $[\mu,\lambda]$. Since $\calA$ is self-adjoint, this means that
	for every $f,g\in C^0_{\circ}(X,\R)$, we have
	\begin{eqnarray}
	& \mu \|f\|^2\leq \Trings{\calA f,f}\leq \lambda\|f\|^2, & \label{EQ:EML-eq1}\\
	& |\Trings{\calA f,g}|\leq \max\{|\lambda |,|\mu |\}\|f\| \|g\|. & \label{EQ:EML-eq2}
	\end{eqnarray}
	
	Put $f:=1_A-\alpha 1_{X(0)}$ and $g:=1_B-\beta 1_{X(0)}$.
	We first check that $f,g\in C^0_{\circ}(X,\R)$ and $\|f\|^2=\alpha(1-\alpha)$, $\|g\|^2=\beta(1-\beta)$.
	Indeed,
	\[
	\Trings{f,1_{X(0)}}=\Trings{1_A,1_{X(0)}}-\alpha\Trings{1_{X(0)},1_{X(0)}}
	=\sum_{x\in X(0)}(1_A(x) w(x)-\alpha w(x))=w(A)-\alpha w(X(0))=0,
	\]
	\begin{align*} 
	\|f\|^2&=
	\sum_{x\in A}(1-\alpha)^2 w(x)+\sum_{x\in X(0)-A}\alpha^2 w(x) 
	=
	\alpha(1-\alpha)^2+(1-\alpha)\alpha^2=\alpha(1-\alpha),  
	\end{align*}
	and a similar computation gives the analogous conclusions for $g$.

	Now, observe that
	\begin{align*}
	\Trings{\calA 1_A, 1_B} &= 
	\Trings{\calA (\alpha 1_{X(0)})+\calA f, \beta 1_{X(0)} + g}
	=
	\Trings{\calA (\alpha 1_{X(0)}),\beta 1_{X(0)}} +\Trings{\calA f,g} \\
	& = \alpha \beta \Trings{1_{X(0)},1_{X(0)}} +\Trings{\calA f,g} =\alpha\beta +
	\Trings{\calA f,g},
	\end{align*}
	where in the third equality we used Lemma~\ref{LM:adjacency-basic-properties}(iii).
	By Lemma~\ref{LM:inner-prod-formula},
	$\Trings{\calA 1_A, 1_B}=\frac{1}{2}w(E_{\ord}(A,B))$, so  
	\begin{align}\label{EQ:EML-eq3}
	\frac{1}{2}w(E_{\ord}(A,B)) - \alpha\beta =\Trings{\calA f,g}.
	\end{align}
	Now, by \eqref{EQ:EML-eq2}, 
	\[
	|\frac{1}{2}w(E_{\ord}(A,B)) - \alpha\beta|\leq \max\{|\lambda|,|\mu|\}\|f\|\|g\|
	=\max\{|\lambda|,|\mu|\}\sqrt{\alpha\beta(1-\alpha)(1-\beta)},
	\]
	which proves (i). 
	Also, taking   $A=B$ in \eqref{EQ:EML-eq3} and using \eqref{EQ:EML-eq1}
	gives
	\[
	\mu\alpha(1-\alpha)=\mu\|f\|^2	
	\leq \frac{1}{2} w(E_{\ord} (A)) - \alpha^2 \leq \lambda \|f\|^2=\lambda\alpha(1-\alpha),
	\]
	and (ii) follows because 
	$\frac{1}{2}w(E_{\ord}(A))=w(E(A))$.
\end{proof}

Recall from Lemma~\ref{LM:partite-basis-props} 
that if $(X,w,V_0,\dots,V_r)$
is a weighted   $(r+1)$-partite simplicial complex,
then $-\frac{1}{r}$ is an eigenvalue of $\calA$.
As a result, the constant $\max\{|\lambda|,|\mu|\}$ appearing in Theorem~\ref{TH:EML}(i)
is at least $\frac{1}{r}$. In particular, when $X$ is a bipartite graph,
this constant is $1$, and Theorem~\ref{TH:EML}(i) gives 
almost no  information about  $w(E_{\ord}(A,B))$.
We remedy this   in the following theorem.

\begin{thm}[Expander Mixing Lemma for Weighted $(r+1)$-Partite Graphs]
\label{TH:EML-bipartite}
Let $(X,w)$ be a weighted  $(r+1)$-partite 
simplicial complex that is an $(r+1)$-partite $[-\lambda,\lambda]$-expander. Let $A,B\subseteq X(0)$
and put $\alpha=w(A)$, $\beta=w(B)$,
$\alpha_i = w(A\cap X_{\{i\}})$ and $\beta_i = w(B\cap X_{\{i\}})$ ($i\in\{0,\dots,r\}$).
Then:
\begin{enumerate}[label=(\roman*)]
	\item If there are $T,S\subseteq \{0,\dots,r\}$ such
	that   $A\subseteq \cup_{i\in T} X_{\{i\}}$,
	$B\subseteq \cup_{j\in S} X_{\{j\}}$ and $S\cap T=\emptyset$,
	then
	$|\frac{1}{2}w(E(A,B))-\frac{r+1}{r}\alpha\beta|\leq \lambda(r+1)
	\sqrt{\alpha\beta (\frac{|T|}{r+1}-\alpha)(\frac{|S|}{r+1}-\beta)}$.
	\item In general,
	$
	\left|\frac{1}{2}w(E_{\ord}(A,B))-\frac{r+1}{r}[\alpha\beta-
	\sum_{i=0}^r\alpha_i\beta_i]\right|
	\leq \lambda r\sqrt{\alpha\beta(1-\alpha)(1-\beta)}$.	
	In particular, 
	\begin{align*}
	\frac{1}{2}w(E_{\ord}(A,B)) & \leq \frac{r+1}{r}[\alpha\beta-\sum_{i=0}^r\alpha_i\beta_i]
	+ \lambda r \sqrt{\alpha\beta(1-\alpha)(1-\beta)} \\
	& \leq 
	\frac{r+1}{r} \alpha\beta 
	+ \lambda r \sqrt{\alpha\beta(1-\alpha)(1-\beta)} .
	\end{align*}
\end{enumerate}
\end{thm}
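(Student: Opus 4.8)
\textbf{Proof proposal for Theorem~\ref{TH:EML-bipartite}.}

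The plan is to mimic the proof of Theorem~\ref{TH:EML}, but to work with the projection onto $C^0_\diamond(X,\R)$ rather than onto $1_{X(0)}^\perp$, since now the relevant ``uninformative'' subspace is $L=\Span_\R\{1_{X_{\{0\}}},\dots,1_{X_{\{r\}}}\}$, on which $\calA$ has the eigenvalues $1,-\frac1r,\dots,-\frac1r$ by Lemma~\ref{LM:partite-basis-props}(iii). First I would decompose $1_A = p_A + f$ where $p_A\in L$ is the orthogonal projection of $1_A$ onto $L$ and $f\in C^0_\diamond(X,\R)$, and similarly $1_B = p_B + g$. Because the $1_{X_{\{i\}}}$ are orthogonal with $\|1_{X_{\{i\}}}\|^2 = w(X_{\{i\}}) = \frac1{r+1}$ (Lemma~\ref{LM:partite-basis-props}(i)), one computes $p_A = (r+1)\sum_i \alpha_i 1_{X_{\{i\}}}$, and hence $\|p_A\|^2 = (r+1)\sum_i \alpha_i^2$, so $\|f\|^2 = \alpha - (r+1)\sum_i\alpha_i^2$; likewise for $g$. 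Then, exactly as in the proof of Theorem~\ref{TH:EML}, invoke Lemma~\ref{LM:inner-prod-formula} to write $\frac12 w(E_{\ord}(A,B)) = \Trings{\calA 1_A, 1_B} = \Trings{\calA p_A, p_B} + \Trings{\calA f, g}$, the cross terms vanishing because $L$ and $C^0_\diamond$ are $\calA$-invariant and mutually orthogonal.

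The next step is to evaluate the ``main term'' $\Trings{\calA p_A, p_B}$ explicitly. Since $\calA\, 1_{X_{\{i\}}}$ decomposes as the $1$-eigenvector component plus a $-\frac1r$-eigenvector component within $L$, one finds $\Trings{\calA\, 1_{X_{\{i\}}}, 1_{X_{\{j\}}}} = \frac12 w(E_{\ord}(X_{\{i\}}, X_{\{j\}}))$, which equals $\frac1{r(r+1)}$ for $i\neq j$ (Lemma~\ref{LM:partite-basis-props}(ii), recalling $w(E_{\ord}) = 2w(E)$) and $0$ for $i=j$ (no edges inside a part). Plugging $p_A = (r+1)\sum_i\alpha_i 1_{X_{\{i\}}}$ and the analogue for $p_B$, the main term collapses to
\[
\Trings{\calA p_A, p_B} = (r+1)^2 \sum_{i\neq j}\alpha_i\beta_j \cdot \frac1{r(r+1)} = \frac{r+1}{r}\Bigl(\alpha\beta - \sum_{i=0}^r \alpha_i\beta_i\Bigr),
\]
using $\sum_i\alpha_i = \alpha$, $\sum_j\beta_j = \beta$. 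For the error term, since $\Spec(\calA|_{C^0_\diamond}) \subseteq [-\lambda,\lambda]$ we get $|\Trings{\calA f, g}| \le \lambda\|f\|\|g\| = \lambda\sqrt{(\alpha - (r+1)\sum\alpha_i^2)(\beta-(r+1)\sum\beta_i^2)}$. To reach the stated bound in part (ii) it remains to check $\alpha - (r+1)\sum_i\alpha_i^2 \le r\,\alpha(1-\alpha)$ (and likewise for $\beta$): this is elementary, following from $\sum_i\alpha_i^2 \ge \frac1{r+1}\alpha^2$ by Cauchy--Schwarz (over the $r+1$ parts), which gives $\alpha - (r+1)\sum\alpha_i^2 \le \alpha - \alpha^2$, in fact even slightly better than needed, and the coarser constant $r$ (versus $1$) is harmless. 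This yields part (ii); the final ``in particular'' inequalities follow since $\sum_i\alpha_i\beta_i \ge 0$.

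For part (i): under the hypothesis that $A$ lives in the parts indexed by $T$ and $B$ in the parts indexed by the disjoint set $S$, we have $\alpha_i = 0$ for $i\notin T$ and $\beta_j = 0$ for $j\notin S$, and crucially $\sum_i\alpha_i\beta_i = 0$ because $S\cap T=\emptyset$. Moreover $E(A,B) = E_{\ord}(A,B)$ up to the factor from ordering does \emph{not} double-count here because $A$ and $B$ sit in disjoint parts — I should be careful with this bookkeeping, but the point is that $\frac12 w(E_{\ord}(A,B))$ and $\frac12 w(E(A,B))$ coincide in the statement as written since every edge between $A$ and $B$ contributes once in each direction and the factor is absorbed. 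The main term becomes $\frac{r+1}{r}\alpha\beta$ directly. For the error, I would bound $\sum_{i\in T}\alpha_i^2 \ge \frac1{|T|}\alpha^2$ by Cauchy--Schwarz restricted to the $|T|$ active parts, giving $\|f\|^2 = \alpha - (r+1)\sum_i\alpha_i^2 \le \alpha - \frac{r+1}{|T|}\alpha^2 = (r+1)\alpha\bigl(\frac1{r+1}\cdot\frac{|T|}{|T|} - \cdots\bigr)$; more cleanly, $\|f\|^2 \le (r+1)\bigl(\frac{|T|}{r+1}\cdot\frac{\alpha}{|T|}\cdot\text{(stuff)}\bigr)$ — the cleanest route is to note $\|f\|^2 = (r+1)\bigl(\frac{\alpha}{r+1} - \sum_{i\in T}(\alpha_i - \tfrac{\alpha}{|T|})^2 - \tfrac{\alpha^2}{|T|}\cdot(\text{sign})\bigr)$; rearranging gives $\|f\|^2 \le (r+1)\bigl(\tfrac{|T|}{r+1} - \alpha\bigr)\cdot\tfrac{\alpha}{|T|}\cdot|T| = (r+1)\alpha\bigl(\tfrac{|T|}{r+1} - \alpha\bigr)\cdot\tfrac{1}{?}$ — I will just verify directly that $\alpha - (r+1)\sum_{i\in T}\alpha_i^2 \le (r+1)\alpha(\tfrac{|T|}{r+1} - \alpha)$, equivalently $(r+1)\sum_{i\in T}\alpha_i^2 \ge \alpha - (r+1)\alpha\tfrac{|T|}{r+1} + (r+1)\alpha^2 = \alpha(1 - |T|) + (r+1)\alpha^2$, which since $|T|\ge 1$ reduces to $(r+1)\sum\alpha_i^2 \ge (r+1)\alpha^2 - \alpha(|T|-1)$; as $\sum_{i\in T}\alpha_i^2 \ge \alpha^2/|T|$ this needs $(r+1)\alpha^2/|T| \ge (r+1)\alpha^2 - \alpha(|T|-1)$, i.e. $(r+1)\alpha(1 - 1/|T|) \le |T|-1$, i.e. $(r+1)\alpha/|T| \le 1$, which holds since $\alpha = w(A) \le w(\cup_{i\in T}X_{\{i\}}) = |T|/(r+1)$. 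Then $|\Trings{\calA f,g}| \le \lambda\|f\|\|g\| \le \lambda(r+1)\sqrt{\alpha\beta(\tfrac{|T|}{r+1}-\alpha)(\tfrac{|S|}{r+1}-\beta)}$, giving (i).

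\emph{Main obstacle.} The conceptual content is routine once the correct projection is identified; the main place to be careful is the bookkeeping in part (i) — keeping straight the factors of $2$ relating $E$, $E_{\ord}$, and the inner product $\Trings{\calA 1_A, 1_B}$, and confirming that the ``disjoint parts'' hypothesis makes $w(E(A,B))$ and $w(E_{\ord}(A,B))$ interchangeable in the asserted form — together with nailing the elementary inequality $\alpha - (r+1)\sum_i\alpha_i^2 \le (r+1)\alpha(\tfrac{|T|}{r+1}-\alpha)$ using the constraint $\alpha \le |T|/(r+1)$. Getting the constant in part (ii) to be exactly $\lambda r$ rather than, say, $\lambda(r+1)$ requires the slightly sharper estimate via Cauchy--Schwarz over all $r+1$ parts, so I would double-check that $\alpha - (r+1)\sum_i\alpha_i^2 \le r\alpha(1-\alpha)$ holds — indeed $\alpha - (r+1)\sum_i\alpha_i^2 \le \alpha - \alpha^2 \le r\alpha(1-\alpha)$ for $r\ge 1$, so this is fine.
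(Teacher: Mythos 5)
Your argument is correct, and it is genuinely different from the paper's proof. The paper decomposes $A$ and $B$ part by part, setting $f_i=1_{A_i}-(r+1)\alpha_i 1_{X_{\{i\}}}$, proves the two-sided estimate for each pair $(A_i,B_j)$ with $i\neq j$ separately, and then aggregates the $r(r+1)$ error terms using concavity of $(x,y)\mapsto\sqrt{xy(1-(r+1)x)(1-(r+1)y)}$ and Jensen's inequality; that aggregation is exactly where the factors $r$ and $r+1$ in the stated bounds come from. You instead perform a single global orthogonal decomposition $1_A=p_A+f$ with $p_A=(r+1)\sum_i\alpha_i 1_{X_{\{i\}}}\in L$ and $f\in C^0_\diamond(X,\R)$, evaluate the main term $\Trings{\calA p_A,p_B}=\tfrac{r+1}{r}(\alpha\beta-\sum_i\alpha_i\beta_i)$ exactly (the cross terms vanish since $L$ is $\calA$-invariant and $\calA$ is self-adjoint), and control the error by one application of Cauchy--Schwarz to $\|f\|^2=\alpha-(r+1)\sum_i\alpha_i^2$. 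This buys a cleaner argument with no concavity computation, and in part (ii) it actually yields the sharper constant $\lambda$ in place of $\lambda r$ (your bound $\|f\|^2\leq\alpha(1-\alpha)$ via $\sum_i\alpha_i^2\geq\alpha^2/(r+1)$ is correct), while in part (i) your constraint $\alpha\leq |T|/(r+1)$ plays the role that $\alpha_i\leq\tfrac1{r+1}$ plays in the paper, and your direct verification of $\alpha-(r+1)\sum_{i\in T}\alpha_i^2\leq(r+1)\alpha(\tfrac{|T|}{r+1}-\alpha)$ is sound. Two small points to tidy: the middle of your part (i) computation contains abandoned attempts that should simply be deleted in favor of the final direct verification; and your justification for replacing $E_{\ord}$ by $E$ in (i) is stated incorrectly -- an edge between $A$ and $B$ does \emph{not} contribute ``once in each direction'' to $E_{\ord}(A,B)$; rather, since $A$ and $B$ lie in disjoint parts, each unordered edge of $E(A,B)$ corresponds to exactly one ordered edge of $E_{\ord}(A,B)$, so $w(E_{\ord}(A,B))=w(E(A,B))$ and the factor $\tfrac12$ carries through unchanged (this is also how the paper's own proof treats $E(A_i,B_j)$).
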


\begin{proof}
	For $i\in\{0,\dots,r\}$, let $A_i=A\cap X_{\{i\}}$ and $B_i=B\cap X_{\{i\}}$.
	Define $f_i=1_{A_i}-(r+1)\alpha_i 1_{X_{\{i\}}}$ and
	$g_i=1_{B_i}-(r+1)\beta_i 1_{X_{\{i\}}}$.
	Since $\supp(f_i)\subseteq X_{\{i\}}$,
	we have   $\Trings{1_{X_{\{j\}}},f_i}=0$ for all $j\in\{0,\dots,r\}-\{i\}$,
	whereas
	by Lemma~\ref{LM:partite-basis-props}(i),
	we also have $\Trings{1_{X_{\{i\}}},f_i}=\Trings{1_{X_{\{i\}}},1_{A_i}}-(r+1)\alpha_i \Trings{1_{X_{\{i\}}},
	1_{X_{\{i\}}}} = w(A_i)- (r+1)\alpha_i\frac{1}{r+1}=0$,
	so $f_i\in C^0_\diamond(X,\R)$. Likewise, $g_i\in C^0_\diamond(X,\R)$. 
	We further note that
	\begin{align*}
		\|f_i\|^2&=w(A_i)(1-(r+1)\alpha_i)^2+w(X_{\{i\}}-A_i)((r+1)\alpha_i)^2\\
		&=\alpha_i(1-(r+1)\alpha_i)^2+(\frac{1}{r+1}-\alpha_i)(r+1)^2\alpha_i^2=
		\alpha_i(1-(r+1)\alpha_i).
	\end{align*}
	
	Fix some distinct $i,j\in\{0,\dots,r\}$. 
	By Lemmas~\ref{LM:inner-prod-formula} and Lemma~\ref{LM:partite-basis-props}(ii),
	we have
	$\Trings{\calA 1_{\{X_i\}},1_{\{X_j\}}}=\frac{1}{2}w(E(X_i,X_j))=\frac{1}{2}\cdot\frac{2}{r(r+1)}=
	\frac{1}{r(r+1)}$.
	Now, using Lemma~\ref{LM:inner-prod-formula} again, we find that
	\begin{align}\label{EQ:EML-bipartite:eq-1}
		\frac{1}{2}w(E(A_i ,B_j))
		& =\Trings{\calA 1_{A_i},1_{B_j}}\\
		&=\Trings{\calA((r+1) \alpha_i 1_{X_{\{i\}}})+\calA f_i,
		(r+1) \beta_j 1_{X_{\{j\}}} +g_j} \nonumber \\
		&=
		\Trings{(r+1)\alpha_i\calA 1_{X_{\{i\}}},(r+1)\beta_j 1_{X_{\{j\}}}} +
		\Trings{f_i,g_j} \nonumber \\
		&=
		(r+1)^2\alpha_i\beta_j\cdot\frac{1}{r(r+1)}+\Trings{\calA f_i,g_i}
		=\frac{r+1}{r}\alpha_i\beta_j+\Trings{\calA f_i,g_i}. \nonumber
	\end{align}
	Since $(X,w)$ is an $(r+1)$-partite $[-\lambda,\lambda]$-expander,
	we have 
	\begin{align*}
		\Trings{\calA f_i,g_j}\leq \lambda\|f_i\|\|g_j\|
		=\lambda\sqrt{\alpha_i\beta_i(1-(r+1)\alpha_i)(1-(r+1)\beta_j)}.
	\end{align*}
	Together with \eqref{EQ:EML-bipartite:eq-1}, this implies that
	\begin{align}\label{EQ:EML-bipartite:eq-2}
	\left|\frac{1}{2}w(E(A_i,B_j))-\frac{r+1}{r}\alpha_i\beta_j \right|
	\leq   \lambda \sqrt{\alpha_i\beta_i(1-(r+1)\alpha_i)(1-(r+1)\beta_j)}.
	\end{align}

	It is routine, yet tedious, to check
	that the real two-variable function
	\[h(x,y)=\sqrt{xy(1-(r+1)x)(1-(r+1)y)}\] 
	is concave on 
	$[0,\frac{1}{r+1}]\times [0,\frac{1}{r+1}]$.\footnote{
		Indeed, writing
		$k=r+1$, the determinant of the Hessian matrix of $h$
		evaluates to $\frac{k^3}{4}[\frac{k}{1-kx}+\frac{1}{x}+\frac{k}{1-ky}+\frac{1}{y}-4k]$,
		which  is  positive on the interior of $[0,\frac{1}{r+1}]^2$.
		In addition,   $\frac{{\partial}^2 h}{(\partial x)^2}=-\frac{\sqrt{y(1-ky)}}{[x(1-kx )]^{3/2}}$
		is negative on that domain, so 
		the Hessian matrix is negative semidefinite.
		}
	Thus, by Jensen's inequality, for any sequence of points $\{(x_k,y_k)\}_{k=1}^t$
	in the square 
	$[0,\frac{1}{r+1}]^2$, we have
	$\sum_{k=1}^t h(x_k,y_k)\leq t h( \sum_{k}\frac{x_k}{t},\sum_k \frac{y_k}{t})$.
	We apply this with together with \eqref{EQ:EML-bipartite:eq-2} to prove (i) and (ii).
	
\smallskip	
	
	To prove (i), we consider the points $\{(\alpha_i,\beta_j)\}_{i\in S,j\in T}$.
	By \eqref{EQ:EML-bipartite:eq-2},
	Jensen's inequality and our assumptions on $A$ and $B$, we have
	\begin{align*}
		\left| \frac{1}{2}w(E(A,B)) -\frac{r+1}{r} \alpha\beta \right|  &=
		\left|\sum_{i\in T}\sum_{j\in S} \left[\frac{1}{2}w(E(A_i,B_j))-\frac{r+1}{r} \sum_{i\in T}\sum_{j\in S}
		\alpha_i\beta_j\right]\right|\\
		&\leq
		\sum_{i\in T}\sum_{j\in S}\left|\frac{1}{2}w(E(A_i,B_j))-\frac{r+1}{r} \alpha_i\beta_j\right|\\
		&\leq
		\lambda \sum_{i\in T}\sum_{j\in S} \sqrt{\alpha_i\beta_i(1-(r+1)\alpha_i)(1-(r+1)\beta_j)}\\
		&\leq \lambda |T||S|\sqrt{\frac{\alpha}{|T|}\frac{\beta}{|S|}(1-\frac{(r+1)\alpha}{|T|})
		(1-\frac{(r+1)\beta}{|S|})}\\
		&=\lambda (r+1)\sqrt{\alpha\beta (\frac{|T|}{r+1}-\alpha)(\frac{|S|}{r+1}-\beta)},
	\end{align*}
	which is what we want.
	
\smallskip
	
	To prove (ii), we consider all the points $(\alpha_i,\beta_j)$
	with $i,j\in\{0,\dots,r\}$ and $i\neq j$. By \eqref{EQ:EML-bipartite:eq-2}
	and Jensen's inequality,
	we have
	\begin{align*}
		\Big| \frac{1}{2}w(E(A,B))-\frac{r+1}{r} [\alpha\beta-\sum_{i=0}^r\alpha_i\beta_i] \Big| 
		&=
		\left| \sum_{i\neq j}\left[\frac{1}{2}w(E(A,B))	-\frac{r+1}{r} \alpha_i\beta_i\right]\right| \\
		&\leq \sum_{i\neq j} 
		\lambda \sqrt{\alpha_i\beta_i(1-(r+1)\alpha_i)(1-(r+1)\beta_j)}
		\\
		& \leq \lambda r(r+1)
		\sqrt{\frac{\alpha}{r+1}\frac{\beta}{r+1}(1-\frac{(r+1)\alpha}{r+1})(1-\frac{(r+1)\beta}{r+1})}\\
		&= 
		\lambda r\sqrt{\alpha\beta(1-\alpha)(1-\beta)},
	\end{align*}
	so we are done.
\end{proof}

\section{Sheaves on Simplicial Complexes}
\label{sec:sheaves}

We recall from   \cite{First_2023_sheaves_on_complexes_preprint} 
and \cite{First_2024_cosyst_exp_posets_preprint}
the definition of   sheaves on simplicial complexes as well as their cohomology
and   coboundary expansion, focusing particularly in the case of   sheaves on graphs.
The sheaves that we consider here are called \emph{augmented sheaves}
in  \cite{First_2023_sheaves_on_complexes_preprint}. 

\subsection{Sheaves on Simplicial Complexes}
\label{subsec:sheaves}

Let $X$ be  a simplicial complex. Following
\cite{First_2023_sheaves_on_complexes_preprint},
a  \emph{sheaf}   $\calF$ on $X$ consists of 
\begin{enumerate}[label=(\arabic*)]
	\item an abelian group $\calF(x)$ for every $x\in X$ (including the empty face), and
	\item a group homomorphism $\res^\calF_{y\from x}:\calF(x)\to\calF(y)$ for all $x\subsetneq y\in X$
\end{enumerate}
such that
\begin{equation}\label{EQ:sheaf-cond}
	\res^\calF_{z\from y}\circ\res^\calF_{y\from x} = \res^\calF_{z\from x}
\end{equation}
whenever $x\subsetneq y\subsetneq z\in X$. 
This generalizes the case of graphs considered in the introduction.
We will usually drop the superscript $\calF$ from $\res^\calF_{y\from x}$
when there is no risk of confusion.

\begin{example}\label{EX:constant-sheaf}
	Let $X$ be a simplicial complex and let $R$ be an (additive) abelian group.
	The \emph{constant   sheaf} associated
to $R$ is the   sheaf $\calF$ on $X$ determined by $\calF(x)=R$ for all $x\in X$
and $\res^{\calF}_{y\from x}=\id_R$ for all $x\subsetneq y\in X$.
	We denote this sheaf  by $\aug{R}_X$, or just $\aug{R}$ when $X$ is clear from the context.
\end{example}

There are obvious notions of subsheaves, quotient sheaves, and
homomorphisms between sheaves, see \cite[\S4.1]{First_2023_sheaves_on_complexes_preprint}.
An augmented sheaf (resp.\ sheaf) $\calF$ on a graph $X$ is said to be \emph{constant} if it is isomorphic
to the constant augmented sheaf (resp.\ sheaf) associated to some abelian group $R$.

\subsection{Sheaf Cohomology}
\label{subsec:sheaf-coh}

Let $\calF$ be an augmented sheaf on a simplicial complex $X$.
Fix a linear ordering $L$
on the vertices of $X$  and,
for every $i\in \N\cup \{-1,0\}$,
let $C^i=C^i(X,\calF)$ denote $\prod_{x\in X(i)}\calF(x)$. 
Elements of $C^i $ are called \emph{$i$-chains} with coefficients in $\calF$.
Given
$f\in C^i(X,\calF)$, we write the $x$-component of $f$ as $f(x)$.
Writing $x=\{v_0,\dots,v_i\}$ with $v_0<v_1<\dots<v_i$
relative to $L$, we let  $x_j$   denote $x-\{v_j\}$.
As in \cite[\S4.2, Remark~4.5]{First_2023_sheaves_on_complexes_preprint},
the $i$-th coboundary map $d_i:C^i \to C^{i+1} $ is defined by
\[
(d_i f)(y) = \sum_{j=0}^{i+1}(-1)^j\res_{y\from x_j}f(x_j)
\] 
for all $f\in C^i $, $y\in X(i+1)$.
For example, 
$d_{-1}$ and $d_0$ are given by the formulas from the introduction  \eqref{EQ:differential-sheaves}  if for $e\in X(1)$,
we let  $e^+$ and $e^-$ denote the larger and smaller 
vertices of $e$ relative to $L$, respectively.
We have $d_i\circ d_{i-1}=0$, and
as usual, the $i$-coboundaries, $i$-cocycles and $i$-th cohomology of $(X,\calF)$ are
defined to be
\[
B^i(X,\calF):=\im d_{i-1},\qquad
Z^i(X,\calF) :=\im d_i,\qquad
\HH^i(X,\calF)=\frac{Z^i(X,\calF)}{B^i(X,\calF)},
\]
respectively. We shall abbreviate $B^i(X,\calF)$ to $B^i$
and $Z^i(X,\calF)$ to $Z^i$ when there is no risk of confusion.

For example, if $\calF=\aug{R}_X$ for an abelian group $R$ 
(see Example~\ref{EX:constant-sheaf}), then this recoveres
the usual (reduced) cohomology theory of the simplicial complex $X$ with coefficients in $R$.
In particular,
$B^0(X,\aug{R})$ is the set of constant
functions from $X(0)$ to $R$, $Z^0(X,\aug{R})$ is the set of functions $f:X(0)\to R$
which are constant on each connected component of $X$ and $\HH^0(X,\aug{R})$
is isomorphic to $R^{|\pi_0(X)|-1}$
and
coincides with the reduced singular cohomology group $\tilde{\HH}^0(X,R)$.

\begin{remark}
	The groups $B^0(X,\calF)$ and $Z^0(X,\calF)$
	are independent of the linear ordering $L$ on $V(X)$.
	Indeed, it is straightforward to see
	that $Z^0(X,\calF)$ consists of those $f\in C^0$
	such that $\res_{e\from u} f(u)=\res_{e\from v}f(v)$
	for every edge $e=\{u,v\}\in X(1)$, and
	$B^0(X,\calF)$ consists of the $f\in C^0$
	for which there is $g\in \calF(\emptyset)$
	such that $f(v)=\res_{v\from\emptyset} g$ for all $v\in X(0)$.
	
	For $i\geq 1$,  changing $L$ may change $B^i$ and $Z^i$,
	but not their isomorphism type. See \cite[Proposition~5.11]{First_2024_cosyst_exp_posets_preprint} or \cite[Remark~4.5]{First_2023_sheaves_on_complexes_preprint}.
\end{remark}

\subsection{Coboundary and Cosystolic Expansion}
\label{subsec:coboundary}

Let $(X,w)$ be a weighted $d$-complex (see~\S\ref{subsec:weights})
and let $\calF$ be a    sheaf on $X$.
For $i\in \{-1,0,\dots,d\}$, the \emph{$w$-support norm}
on $ C^i=C^i(X,\calF)$ is the function $\|\cdot\|_w:C^i\to \R$ defined by
\[
\|f\|_w=w(\supp f),
\]
where $\supp f=\{x\in X(i)\suchthat f(x)\neq 0\}$.\footnote{
	Caution: When $\calF$ is the constant sheaf $\R_X$, the norm $\|\cdot\|_w$
	is \emph{not} the norm induced from the inner products we defined
	on $C^0(X,\R)$ and $C^1(X,\R)$ in \S\ref{subsec:expansion}.
	Moreover, $(C^i(X,\R),\|\cdot\|_w)$ is not  a normed $\R$-vector space.}
The corresponding metric on $C^i $ is
\[
\dist_w(f,g):=\|f-g\|_w.
\]
The subscript $w$ will   be dropped from $\|\cdot\|_w$ and $\dist_w$
when there is no risk of confusion.

Let   $i\in\{-1,0,\dots,d-1\}$.
As in \cite[\S6.2, \S6.3]{First_2024_cosyst_exp_posets_preprint}, we
define the \emph{$i$-coboundary expansion} of $(X,w,\calF)$,
denoted 
$\cb_i(X,w,\calF)$
to be the supremum of the set of $\veps\in[0,\infty)$
for which
\[
\|d_i f\|_w\geq \veps \dist_w(f,B^i(X,\calF))\qquad\forall f\in C^i(X,\calF).
\]
We further say that $(X,w,\calF)$ is an \emph{$\veps$-coboundary expander
in dimension $i$} if 
$\cb_i(X,w,\calF)\geq i$.

Similarly, the \emph{$i$-cosystolic expansion} of
$(X,w,\calF)$, denoted $\cse_i(X,w,\calF)$, 
is the supremum of   $\veps\in [0,\infty)$ for which
\[
\|d_i f\|_w\geq \veps \dist_w(f,Z^i(X,\calF))\qquad\forall f\in C^i(X,\calF),
\]
and the \emph{$i$-cocycle distance} of $(X,w,\calF)$ is
\[
\ccd_i(X,w,\calF):=\sup\{\|f\|_w\where 0\neq w\in Z^i(X,\calF)\}.
\]
These definitions are related to and motivated by locally testable codes;
see \cite[\S6]{First_2024_cosyst_exp_posets_preprint} for a detailed discussion
of this. 

Observe that if $\cb_i(X,w,\calF)>0$, then we must have $B^i=Z^i$ (because 
any $f\in Z^i-B^i$ satisfies $\|d_if\|_w=0$ and $\dist_w(f,B^i)>0$)
and therefore $\cb_i(X,w,\calF)=\cse_i(X,w,\calF)$.
It is possible, however, for $\cse_i(X,w,\calF)$ to be positive when
$\cb_i(X,w,\calF)=0$.
We further note that
$\cb_i(X,w,\calF)$, $\cse_i(X,w,\calF)$ and $\ccd_i(X,w,\calF)$ 
do not depend on the implicit linear ordering on $V(X)$
\cite[Proposition~6.6]{First_2024_cosyst_exp_posets_preprint}.

\begin{example}\label{EX:Cheeger-constant}
	Let $(X,w)$ be a weighted graph.
	We saw in the introduction that
	$\cb_0(X,w, \F_2 )=h(X,w)$
	(here $\F_2$ stands for the constant sheaf it induces on $X$).
	If   $X$ is moreover  $k$-regular, $w$ is its canonical weight function 
	(Example~\ref{EX:regular-graph}),
	and we consider the non-weighted
	Cheeger constant \[\tilde{h}(X)=\min_{\emptyset\neq S\subsetneq X(0)}\frac{|E(S,X(0)-S)|}{\min\{|S|,|X(0)-S|\}} ,\]
	then   $\tilde{h}(X)=\frac{k }{2}\cb_0(X,w, \F_2 ) $.
	
	For comparison, it is not difficult to check
	that $\cse_0(X,w,\F_2)=\max_{Y\in\calC} h(Y,w|_Y)$, where $\calC$
	is the set of  connected components of $X$,
	and $\ccd_0(X,w,\F_2)=\max_{Y\in\calC} w(Y(0))$.
\end{example}

The main results of the following sections --- 
Theorems~\ref{TH:expanders-are-cb-exps},
\ref{TH:cbe-for-quotient-sheaves},
\ref{TH:cbe-for-partite-quotient-sheaves} and Corollary~\ref{CR:expansion-of-quotient-sheaves-on-buildings} ---
will only concern with coboundary expansion in dimension $0$.
However, they can be converted
to statements about cosystolic expansion by means of the following remark.

\begin{remark}\label{RM:coboundary-to-cosystolic}
	Let $(X,w)$ be a weighted graph and let $\calF$ be a   
	sheaf on $X$ such that $\cb_0(X,w,\calF)>0$.
	Denote by $\calF_0$   the subsheaf of $\calF$
	determined by $\calF_0(\emptyset)=0$ and
	$\calF_0(x)=\calF(x)$ for $x\neq \emptyset$.
	Then $\cse_0(X,w,\calF)=\cb_0(X,w,\calF)$
	and $\ccd_0(X,w,\calF)=\max\{\|d_{-1}f\|_w\where f\in \calF(\emptyset)\}$.
	Indeed, this follows directly from the definitions
	because the assumption $\cb_0(X,w,\calF)>0$ implies  $B^0=Z^0$.
\end{remark}

\section{Coboundary Expansion of Constant   Sheaves on Graphs}
\label{sec:cbe-constant}

We now show that   a weighted graph $(X,w)$ is  
a good comobinatorial expander, i.e., $h(X,w)$ is large,
if and only if
$(X,w,\calF)$ is a good coboundary expander in dimension $0$ for every  
constant  sheaf $\calF$. 
As explained in the introduction, this is evident in the case $\calF=(\F_2)_X$
(notation as in  Example~\ref{EX:constant-sheaf}).
As a consequence, it will follow that $(X,w,R_X)$ is a good coboundary expander
for every spectral expander $(X,w)$ (see~\S\ref{subsec:expansion}) and abelian group
$R$.

By contrast, we also show that under mild assumptions, $X$ admits
\emph{locally constant} sheaves $\calF$
such that $(X,w,\calF)$ has poor coboundary expansion in dimension $0$,
regardless of how large $h(X,w)$ is.

\begin{lem}\label{LM:strange-inequality}
	Suppose that $\alpha_0,\dots,\alpha_t\in [0,1]$
	satisfy 
	$\alpha_0\geq \max\{\alpha_1,\dots,\alpha_t\}$
	and  $\sum_{i=0}^t\alpha_i\leq 1$.  
	Then $\sum_{i=0}^t\alpha_i(1-\alpha_i)\geq \sum_{i=1}^t\alpha_i$.
\end{lem}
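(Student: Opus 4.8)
The plan is to rearrange the claimed inequality into an equivalent, more transparent form and then apply the hypothesis termwise. Expanding both sides, the desired inequality
\[
\sum_{i=0}^t\alpha_i(1-\alpha_i)\geq \sum_{i=1}^t\alpha_i
\]
is equivalent, after cancelling $\sum_{i=1}^t\alpha_i$ from each side, to
\[
\alpha_0-\sum_{i=0}^t\alpha_i^2\geq 0,
\qquad\text{i.e.}\qquad
\alpha_0\geq \sum_{i=0}^t\alpha_i^2.
\]
So it suffices to prove this last inequality.

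For each $i\in\{0,\dots,t\}$ the hypothesis $\alpha_0\geq\alpha_i\geq 0$ gives $\alpha_i^2\leq\alpha_0\alpha_i$. Summing over $i$ and then using $\sum_{i=0}^t\alpha_i\leq 1$ yields
\[
\sum_{i=0}^t\alpha_i^2\leq \alpha_0\sum_{i=0}^t\alpha_i\leq \alpha_0,
\]
which is exactly what we needed. Tracing back through the equivalence completes the proof.

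There is no real obstacle here: the only point requiring the hypotheses is the two-line estimate above, which uses $\alpha_i\le\alpha_0$ to pass from $\alpha_i^2$ to $\alpha_0\alpha_i$ and the normalization $\sum\alpha_i\le 1$ to conclude. One could also phrase the first step as ``$\sum_{i=0}^t\alpha_i(1-\alpha_i)-\sum_{i=1}^t\alpha_i=\alpha_0(1-\alpha_0)-\sum_{i=1}^t\alpha_i^2$ and $\alpha_0(1-\alpha_0)\ge\alpha_0(\alpha_1+\dots+\alpha_t)\ge\sum_{i=1}^t\alpha_i^2$'', but the formulation above is the cleanest.
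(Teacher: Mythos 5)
Your proof is correct, and it is essentially the same argument as the paper's: rearrange to a sum-of-squares bound and use $\alpha_i\le\alpha_0$ termwise together with $\sum_i\alpha_i\le 1$ (indeed, the alternative phrasing in your last paragraph is exactly the paper's proof). No issues.
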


\begin{proof}
	After rearranging, the inequality
	becomes
	$\sum_{i=1}^t\alpha_i^2\leq \alpha_0(1-\alpha_0) $.
	Since $\alpha_0\geq \max\{\alpha_1,\dots,\alpha_t\}$, we have
	$
	\sum_{i=1}^t\alpha_i^2\leq \sum_{i=1}^t\alpha_0\alpha_i\leq \alpha_0(1-\alpha_0)
	$.
\end{proof}

Recall from \S\ref{subsec:expansion} that $h'(X,w)=\min_{\emptyset \neq A\subsetneq X(0)}
\frac{w(E(A,X(0)-A))}{2w(A)w(X(0)-A)}$.

\begin{thm}\label{TH:expanders-are-cb-exps}
	Let $(X,w)$ be a weighted graph and let $R$ be a nontrivial
	(additive) abelian group. Then:
	\[
	\frac{1}{2}h(X,w)\leq h'(X,w)\leq \cb_0(X,w,R_X)\leq h(X,w).
	\]
\end{thm}

\begin{proof}
	The left inequality is just \eqref{EQ:h-h-prime-relation}.
	
	We turn to prove the middle inequality.
	Given $a\in R$, write $f_a$ for the element of $C^0(X,\aug{R})$
	defined by $f_a(x)=a$ for all $x\in X(0)$.	
	We 
	abbreviate 
	$\|\cdot\|_{w}$ to $\|\cdot\|$ and $h'(X,w)$ to $h'$.
	
	Let $f\in C^0(X,\aug{R})$.
	We need to show that $\|d_0 f\| \geq   h' \dist(f,B^k(X,\aug{R})) $,
	or equivalently, that $\|d_0 f\|\geq  h' \|f-f_a\|$ for some $a\in R$.
	Let	 $a_0=0,a_1,\dots,a_t\in R$ be the values
	attained by $f$ together with $0 $,  and write
	$A_i=\{x\in X(0)\suchthat f(x)=a_i\}$ and $\alpha_i=w(A_i)$.
	Then $\sum_{i=0}^t\alpha_i=1$ and $\sum_{i=1}^t\alpha_i=\|f\|$.
	Choose $j\in\{0,\dots,t\}$ such that $\alpha_j=\max\{\alpha_0,\dots,\alpha_t\}$.
	By replacing $f$ with $f-f_{a_j}$, we may assume that $j=0$.
	
	Let $e=\{u,v\}\in X(1)$. If $u\in A_i$ and $v\in A_j$ for distinct
	$i$ and $j$, then $f(x)\in \{a_i-a_j,a_j-a_i\}$,
	and  
	otherwise  $f(x)=0$. 
	This means that
	$
	\|d_0 f\| =\sum_{0\leq i<j\leq t}w(E(A_i,A_j))$.
	
	Writing $A_i^c:=X(0)-A_i$,
	we have
	$\sum_{0\leq i<j\leq t}w(E(A_i,A_j)) =\sum_{i=0}^t \frac{1}{2} w(E(A_i,A_i^c)) $.
	By the definition of $h'=h'(X,w)$,
	\begin{align*}
	 w(E(A_i, A_i^c))  
	& \geq
	h' \cdot 2 \alpha_i(1-\alpha_i)
	\end{align*}
	Thus, $\|d_0f\| \geq h' \sum_{i=0}^t\alpha_i(1-\alpha_i)\geq 
	h' \sum_{i=1}^t\alpha_i=h' \|f\|$,
	where the second inequality is Lemma~\ref{LM:strange-inequality}. 
	
	Finally, we prove the right inequality. Fix a nontrivial element $a\in R$. For every $A\subseteq X(0)$,
	let $f_A\in C^0(X,\aug{R})$ denote the function from $X$ to $R$
	taking the value  $a$ on $A$ and $0_R$ elsewhere.
	Writing $A^c:=X(0)-A$, we have
	$\supp d_0 f_A= E(A,A^c)$
	and $\dist(f_A, B^0(X,\aug{R}))=\min\{w(A),w(A^c))\}$.
	By the definition of $\cb_0(X,w,\aug{R})$, this means that
	$w(E(A,A^c))\geq \cb_0(X,w,\aug{R}) \min\{w(A),w(A^c)\}$.
	As $A\subseteq X(0)$ was arbitrary,
	it follows $h(X,w)\geq \cb_0(X,w,\aug{R})$.
\end{proof}

\begin{cor}\label{CR:expanders-are-cb-exps}
	Let $(X,w)$ be a weighted graph which is a $[-1,\lambda]$-expander
	(see \S\ref{subsec:expansion}),
	and  
	let $R$ be a nontrival abelian group.
	Then
	$\cb_0(X,w,\aug{R})\geq  1-\lambda$. 
\end{cor}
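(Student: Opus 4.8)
The plan is to combine Theorem~\ref{TH:expanders-are-cb-exps}(i) with the Cheeger-type inequality in Theorem~\ref{TH:Cheeger}. By hypothesis $(X,w)$ is a $[-1,\lambda]$-expander, so Theorem~\ref{TH:Cheeger} gives $h'(X,w) \geq 1-\lambda$. On the other hand, Theorem~\ref{TH:expanders-are-cb-exps}(i) states that for any nontrivial abelian group $R$, the triple $(X,w,\aug{R}_X)$ is an $h'(X,w)$-coboundary expander in dimension $0$, i.e.\ $\cb_0(X,w,\aug{R}_X) \geq h'(X,w)$. Chaining these two inequalities yields $\cb_0(X,w,\aug{R}_X) \geq h'(X,w) \geq 1-\lambda$, which is exactly the assertion that $(X,w,\aug{R})$ is a $(1-\lambda)$-coboundary expander in dimension $0$.

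Concretely, the proof is a two-line deduction: first invoke Theorem~\ref{TH:Cheeger} with the given $\lambda$ to get the bound on $h'(X,w)$; then invoke Theorem~\ref{TH:expanders-are-cb-exps}(i) to transfer this bound to the coboundary expansion constant. Since both cited results are already established, there is essentially no remaining work.

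There is no real obstacle here — this corollary is a direct packaging of the two preceding results, with no new ideas required. The only thing to be slightly careful about is that Theorem~\ref{TH:expanders-are-cb-exps}(i) is stated for an arbitrary nontrivial abelian group $R$ (no finiteness or vector-space hypothesis), so the corollary indeed applies to all such $R$, matching its statement. One could optionally remark that, combined with Theorem~\ref{TH:expanders-are-cb-exps}(ii) and the inequalities \eqref{EQ:h-h-prime-relation}, this shows $\cb_0(X,w,\aug{R})$ is sandwiched between $\tfrac12 h(X,w)$ and $h(X,w)$ regardless of $R$, but that observation is the content of the separately-numbered Corollary~\ref{CR:different-groups} and need not be reproduced in this proof.

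\begin{proof}
Since $(X,w)$ is a $[-1,\lambda]$-expander, Theorem~\ref{TH:Cheeger} gives $h'(X,w)\geq 1-\lambda$. By Theorem~\ref{TH:expanders-are-cb-exps}(i), $(X,w,\aug{R})$ is an $h'(X,w)$-coboundary expander in dimension $0$, i.e.\ $\cb_0(X,w,\aug{R})\geq h'(X,w)\geq 1-\lambda$.
\end{proof}
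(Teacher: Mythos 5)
Your proposal is correct and is exactly the paper's own argument: the paper proves this corollary by citing Theorem~\ref{TH:Cheeger} (giving $h'(X,w)\geq 1-\lambda$) together with Theorem~\ref{TH:expanders-are-cb-exps}(i) (giving $\cb_0(X,w,\aug{R})\geq h'(X,w)$). Your write-up just spells out the same two-step chain explicitly.
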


\begin{proof}
	This follows from Theorems~\ref{TH:Cheeger} and~\ref{TH:expanders-are-cb-exps}.
\end{proof}

\begin{cor}\label{CR:different-groups}
	Let $(X,w)$ be a weighted graph and let $R$, $S$
	be nontrivial abelian groups.
	Then
	\[
	\frac{1}{2}\cb_0(X,w,R)\leq \cb_0(X,w,S)\leq 2\cb_0(X,w,R)\]	
\end{cor}

\begin{proof}
	By  Theorem~\ref{TH:expanders-are-cb-exps}, $\frac{1}{2}\cb_0(X,w,R)\leq \frac{1}{2}h(X,w)\leq\cb_0(X,w,S)
	\leq h(X,w)\leq 2\cb_0(X,w,R)$.
\end{proof}

\begin{example}
	Let $X$ be a complete graph on $3$ vertices
	and let $w$ be its canonical weight function.
	It is routine to check that 
	$\cb_0(X,w, \F_2 )=2$  
	while  $\cb_0(X,w,\aug{R})=\frac{3}{2}$
	for every abelian group $R$ admitting at least $3$ elements.
	(The latter can be checked either directly, or using
	Corollary~\ref{CR:expanders-are-cb-exps}
	and the fact that $(X,w)$ is a $[-1,-\frac{1}{2}]$-expander.)
\end{example}

In \cite[\S4.5]{First_2023_sheaves_on_complexes_preprint},
a sheaf $\calF$ on a simplicial complex $X$
is called \emph{locally constant} if
$\calF(\emptyset)=0$ and for every $z\in X-\{\emptyset\}$,
the restriction of $\calF$
to the link $X_z$ is a constant sheaf on $X_z$.
This is equivalent to saying that $\res_{y\from x}:\calF(x)\to \calF(y)$
is an isomorphism for all $\emptyset\neq x\subsetneq y \in X$
and $\calF(\emptyset)=0$. 
For example, if $\calF$ is a   constant sheaf on $X$, then the subsheaf
$\calF_0$ of $\calF$ obtained by setting $\calF_0(\emptyset)=0$
and $\calF_0(x)=\calF(x)$ for all $x\in X-\{\emptyset\}$
is locally constant.
See \cite[Example~4.15]{First_2023_sheaves_on_complexes_preprint}
for more examples.

We now show that unlike to constant sheaves, locally constant sheaves
on good expander graphs may have poor coboundary expansion. 
In particular,  Theorem~\ref{TH:expanders-are-cb-exps} and
		Corollary~\ref{CR:expanders-are-cb-exps}
		fail   for locally constant sheaves $\calF$.

	\begin{example}\label{EX:locally-constant-bad-cbe}
		Let $(X,w)$ be a connected   weighted graph such
		that every vertex in $X$ is contained in at least $2$ edges. 
		Let $e_0$ be an edge of minimal weight in $X(1)$ 
		and
		let $v_0$ be one of the vertices of $e_0$. 
		Let $\F$ be a field with more than $2$ elements 
		and let $\alpha \in\F-\{0,1\}$.
		We define a locally constant sheaf $\calF$ on $X$ as follows:
		put $\calF(x)=\F$ for all $x\in X-\{\emptyset\}$, and for all 
		$\emptyset\neq v\subsetneq e\in X(1)$, let
		\[
		\res^\calF_{e\from v} =\left\{\begin{array}{ll}
		\id_\F & (e,v)\neq (e_0,v_0) \\
		\alpha\id_\F & (e,v)=(e_0,v_0).	
		\end{array}	
		\right.
		\]	
		We claim that $\HH^0(X,\calF)=0$,
		but $\cb_0(X,w,\calF)\leq w(e_0)\leq \frac{1}{|X(1)|}$, regardless of how
		large $\lambda(X,w)$ or $h(X,w)$ are. 
		To see this, note first 
		that $Z^0(X,\calF)=0$. Indeed, $C^0(X,\calF)=\F^{X(0)}$, and
		any $f\in Z^0(X,\calF)$ must satisfy
		$f(u)=\res_{e\from u}f(u)=\res_{e\from v}f(v)=f(v)$ for every
		edge $e=\{u,v\}\in X(1)-\{e_0\}$. Our assumptions on $X$ imply that $X-\{e_0\}$
		is connected, so $f$ is constant. However, writing 
		$e_0=\{u_0,v_0\}$, we also
		have $\alpha f(u_0)= \alpha \res_{e_0\from u_0}f(u_0)=\alpha \res_{e_0\from v_0}f(v_0)=
		f(v_0)=f(u_0)$. Since $\alpha\neq 1$, it follows that $f(u_0)=0$, and  $f=0$. 
		Now that $Z^0=0$, we also have $B^0=0$ and $\HH^0(X,\calF)=Z^0/B^0=0$.
		Moreover, for every $f\in C^0$, we have $\dist_w(f,B^0)=\dist_w(f,0)=\|f\|_w$. 
		Taking  $f=(1_{\F})_{x\in X(0)}$,
		we get $\supp(d_0 f)=\{e_0\}$, so $\|d_0 f\|_w=w(e_0)$
		while $\dist_w(f,B^0(X,\calF))=1$. As a result,  $\cb_0(X,w,\calF)\leq w(e_0)$.
	\end{example}

\section{Coboundary Expansion of Quotients of Constant Sheaves on Graphs}
\label{sec:cbe-quotients}

	In this section, we show that taking the quotient
	of a constant   sheaf on a weighted graph  by a ``small'' subsheaf still 
	results in a good coboundary expander, provided that
	the weighted graph is a good enough spectral expander.
	
	Recall that given an abelian group $R$, a collection of   subgroups $\{R_i\}_{i\in I}$,
	is said to be   \emph{linearly disjoint} (in $R$)
	if the summation map $(r_i)_{i\in I}\mapsto \sum_{i\in I} r_i:\bigoplus_{i\in I}R_i\to R$
	is injective. For example, if $R$
	is a  vector space over a field $\F$ and $R_i=\F v_i$ 
	for some $v_i\in R$, then    $\{R_i\}_{i\in I}$
	are linearly disjoint if and only 
	if the vectors $\{v_i\}_{i\in I}$
	(including repetitions) are linearly independent in $R$.
	
\begin{thm}\label{TH:cbe-for-quotient-sheaves}
	Let $(X,w)$ be a weighted graph with $n$ vertices  and let $R$ be an
	(additive) abelian group. Suppose that
	we are given subgroups $R_x$ of $R$ for every nonempty $x\in X$, and set $R_\emptyset = \{0_R\}$.
	Define a subsheaf $\calG$ of $R_X$ by
	setting $\calG(x)=\sum_{y\subseteq x}R_y$ ($x\in X$),
	and put 
	\[
	t=\max\{\textstyle\frac{w(e)}{w(x)}\where x\in X(0), e\in X(1)_{\supseteq x}\}
	\qquad
	\text{and}
	\qquad
	s=\max\{w(e)\where e\in X(1)\}.
	\]
	Suppose that 
	\begin{enumerate}[label=(\arabic*)]
		\item  for every subgraph $Y$ of $X$ which is either a cycle
		of length $\leq \ceil{\frac{2}{3}n}$ or a path of a length $\leq 2$
		(in the sense of \S\ref{subsec:complexes}),
		the subgroups $\{R_y\}_{y\in Y}$ are linearly disjoint in $R$, and
		\item for every distinct $u,v\in X(0)$, the subgroups $R_u$ and $R_v$
		are linearly disjoint.
	\end{enumerate}
	If $X$ is a $[\mu,\lambda]$-spectral expander ($\mu,\lambda\in \R$), 
	then  
	\[
	\cb_0(X,w,\aug{R}_X/\calG)\geq \frac{2-4\lambda-4\max\{|\lambda|,|\mu|\}-5t-2s}{5-2\lambda}.
	\]	
\end{thm}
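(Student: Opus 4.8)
The plan is to fix a $0$-cochain $f \in C^0(X,\aug{R}_X/\calG)$ whose coboundary $d_0 f$ has small support $F := \supp(d_0 f) \subseteq X(1)$, and to show that $f$ is close in weighted Hamming distance to a global boundary, i.e.\ to an element of $B^0(X,\aug{R}_X/\calG)$ — which, since $R_\emptyset = 0$, just means a \emph{constant} cochain $(r)_{v \in X(0)}$ for some $r \in R$. The idea is to lift $f$ locally: on a subgraph $Y$ avoiding $F$, $f$ looks like a $d_0$-closed cochain in the quotient, and condition (1) (linear disjointness along cycles of bounded length and paths of length $\le 2$) should force $f$ to agree, on the vertices of such a $Y$, with a single constant value $r_Y \in R$ modulo the appropriate $\calG(v)$. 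The key point is that on a cycle of length $\ell \le \lceil \frac{2}{3} n\rceil$ with no edge in $F$, going around the cycle returns a telescoping sum of restriction-differences that lies in $\sum_y R_y$; linear disjointness of $\{R_y\}_{y \in Y}$ over the faces of the cycle forces each such difference to individually lie in the relevant $R_e + R_u + R_v$, which pins down $f$ on that cycle up to a single constant.

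The next step is a \textbf{clustering} argument: having established that $f$ agrees with some local constant $r_Y$ on each "good" subgraph $Y$, I want to show that many of these local constants coincide and cover a large-weight set of vertices. Concretely, I would call two vertices $u, v$ \emph{equivalent} if they are joined by a short path (length $\le 2$) or lie on a short common cycle, all of whose edges avoid $F$; condition (1) and condition (2) together ($R_u \cap R_v = 0$ for $u \ne v$) should ensure that equivalent vertices carry the \emph{same} value $r \in R$ for $f$ (not merely values that agree modulo $\calG$ — this is where (2) is used to kill the ambiguity). Then I would argue that the vertex set splits into clusters of this equivalence, and that the largest cluster $C$ has weight bounded below. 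This is precisely where the \textbf{Expander Mixing Lemma}, Theorem~\ref{TH:EML}, enters: if every cluster had weight $< \frac{2}{3}$, the clusters would be sparsely connected, forcing many edges between distinct clusters, hence $|F|$ large, contradicting smallness of $\|d_0 f\|$. The girth/length bound $\lceil\frac{2}{3}n\rceil$ is exactly what one needs so that "short cycle" is not vacuous once a cluster exceeds weight $\tfrac23$ in the uniform-weight heuristic; the terms $t$ and $s$ in the final bound are the slack coming from nonuniformity of $w$ (the ratio $w(e)/w(x)$ controls how much a single bad edge can fail to be detected at a vertex, and $s = \max_e w(e)$ bounds the contribution of any single edge).

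With the largest cluster $C$ in hand, carrying a fixed value $r$, one has $f(v) = r$ for all $v \in C$ (as elements of $R/\calG(v)$, but in fact the actual values agree), so $\dist_w(f, B^0) \le w(X(0) - C) = 1 - w(C)$. The remaining task is purely quantitative bookkeeping: combine (a) the upper bound on $1 - w(C)$ obtained from $|F|$ via the mixing lemma — which produces the $\lambda$ and $\max\{|\lambda|,|\mu|\}$ contributions and uses Theorem~\ref{TH:EML}(i)–(ii) applied to the cluster decomposition — with (b) the lower bound $\|d_0 f\|_w = w(F)$, and solve the resulting inequality for $\|d_0 f\|_w / \dist_w(f, B^0)$ to extract the stated ratio $\frac{2 - 4\lambda - 4\max\{|\lambda|,|\mu|\} - 5t - 2s}{5 - 2\lambda}$. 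The $5$'s and $2$'s are combinatorial constants: each bad edge can spoil "shortness" for a bounded number of cycles/paths through its endpoints, inflating the effective weight of vertices that must be removed by a factor tied to the $\frac23$ threshold (hence $2/3 \rightsquigarrow 2/5$ after clearing denominators).

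\textbf{Main obstacle.} The genuinely hard part is the clustering step, specifically proving that \emph{local} constancy of $f$ (up to $\calG$) on short subgraphs propagates to \emph{global} constancy on a large cluster with a \emph{single} value in $R$. The subtlety is that $f$ is a cochain valued in the quotient groups $R/\calG(v)$, so "$f(u)$ equals $f(v)$" only makes sense modulo $\calG(u) + \calG(v)$ a priori; upgrading this to equality of genuine representatives in $R$ requires carefully exploiting condition (2) (pairwise linear disjointness of the vertex subgroups) alongside the path-of-length-$\le 2$ case of condition (1), and doing so consistently across overlapping short subgraphs so that no inconsistency (a "cohomology obstruction") accumulates. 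Controlling how a single edge of $F$ can break the shortness hypothesis for multiple nearby cycles — and bounding the resulting weight loss by the $5t + 2s$ error term — is the other place where real care is needed, and it is what forces the somewhat lossy constant $\tfrac{2}{5}$ rather than $\tfrac{2}{3}$.
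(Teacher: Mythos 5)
Your global strategy (local constancy on short cycles via linear disjointness, merging these into clusters, and an Expander Mixing Lemma estimate showing that one cluster must be large) is indeed the skeleton of the paper's key lemma (Lemma~\ref{LM:key-lemma}, built on Lemmas~\ref{LM:cycle-graph-cycles}, \ref{LM:weight-of-subforest}, \ref{LM:diameter-bound}). But there is a genuine gap in the final assembly: bounding $\dist_w(f,B^0)$ by $1-w(C)$ and combining it with $\|d_0f\|_w=w(\supp d_0f)$ cannot yield the multiplicative inequality $\|d_0f\|_w\geq \veps\,\dist_w(f,B^0)$ for \emph{all} $f$. The clustering argument only gives an upper bound of the shape $\dist_w(f,B^0)\leq \frac{2}{3}+\frac{1}{3}(\|d_0f\|_w+t+s+\lambda+2\max\{|\lambda|,|\mu|\})$, which is vacuous precisely in the regime where $f$ is already very close to $B^0$ (e.g.\ $f$ minus the best constant supported on one vertex of tiny weight); ``solving'' that inequality together with (b) gives no control of the ratio there. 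The paper closes this regime with a separate small-support expansion estimate (Lemmas~\ref{LM:small-set-expansion} and~\ref{LM:expansion-at-links}): by the path-of-length-$\leq 2$ disjointness, for any nonzero $h$ in the quotient at a vertex $v$ at most one edge at $v$ can satisfy $\res_{e\from v}h=0$, so the detecting edges have weight $\geq (2-t)w(v)$; feeding this into Theorem~\ref{TH:EML}(ii) gives $\|d_0f\|_w\geq (2-t-2\lambda-(2-2\lambda)\alpha)\|f\|_w$ when $\|f\|_w\leq\alpha$, and it is the combination of this with the clustering bound, optimizing a threshold $\beta$, that produces $\frac{2-4\lambda-4\max\{|\lambda|,|\mu|\}-5t-2s}{5-2\lambda}$. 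In your write-up the path-of-length-$2$ hypothesis is assigned to the clustering step and $t,s$ are treated as generic nonuniformity slack, so this entire component is missing.

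A second problem is your equivalence relation: declaring $u\sim v$ when they are joined by a path of length $\leq 2$ avoiding $\supp(d_0f)$ does not propagate a common constant. For instance, take $R=\F_2^2$, a path $u$--$x$--$v$, $R_u=R_x=R_v=0$, $R_{\{u,x\}}=\langle e_1\rangle$, $R_{\{x,v\}}=\langle e_2\rangle$, and $f(u)=0$, $f(x)=e_1$, $f(v)=e_1+e_2$: both edge coboundaries vanish in the quotient, the subgroups along the path are linearly disjoint, yet $f$ is not constant. Constancy is forced only around cycles (the telescoping argument of Lemma~\ref{LM:cycle-graph-cycles}, which needs disjointness along the \emph{whole} cycle) and across two cycle-unions sharing at least two vertices (condition (2)); this is exactly why the paper's ``blobs'' are unions of cycle subgraphs. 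One must then also account for the edges outside all blobs and outside $\supp(d_0f)$ via the near-forest weight bound of Lemma~\ref{LM:weight-of-subforest} (this is where $s$ enters), and run the mixing lemma over the tree structure of maximal blobs rather than over arbitrary ``sparsely connected clusters''; these steps are absent from your sketch.
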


\begin{thm}\label{TH:cbe-for-partite-quotient-sheaves}
	Let $(X,w)$ be an $(r+1)$-partite weighted simplicial  complex.
	Let $R$, $\{R_x\}_{x\in X}$, $\calG$, $s$, $t$  be as in
	Theorem~\ref{TH:cbe-for-quotient-sheaves} and assume that
	conditions (1) and (2) of that theorem are fulfilled.
	If $(X,w)$ is an $(r+1)$-partite $[\mu,\lambda]$-expander
	(see \S\ref{subsec:partite-complexes}) with $\lambda\geq -\frac{1}{r}$,
	then  
	\[
	\cb_0(X ,w,\aug{R}_X/\calG)\geq \frac{2r-4r\lambda-4r^2\max\{|\lambda|,|\mu|\}-(5r+2)t-2r s}{ 5r+2 -2r\lambda}.
	\]	
\end{thm}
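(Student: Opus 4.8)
The plan is to fix an arbitrary $f\in C^0(X,\aug{R}_X/\calG)$ and derive the inequality
\[
(5r+2-2r\lambda)\,\|d_0f\|_w\;\ge\;\bigl(2r-4r\lambda-4r^{2}\max\{|\lambda|,|\mu|\}-(5r+2)t-2rs\bigr)\,\dist_w\!\bigl(f,B^0\bigr),
\]
where $B^0=B^0(X,\aug{R}_X/\calG)$; this is precisely the asserted lower bound on $\cb_0(X,w,\aug{R}_X/\calG)$. Fix a set-theoretic lift $\tilde f\in C^0(X,\aug{R}_X)=R^{X(0)}$ of $f$, and call an edge $e=\{u,v\}$ \emph{good} if $(d_0f)(e)=0$, i.e.\ $\tilde f(u)-\tilde f(v)\in\calG(e)=R_u+R_v+R_e$, and \emph{bad} otherwise; the good edges form a spanning subgraph $X'\subseteq X$ with $w\bigl(X(1)-X'(1)\bigr)=\|d_0f\|_w=:\eta$.

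The first substantive step is a local-to-global statement on short good cycles: if $C$ is a cycle all of whose edges lie in $X'$ and whose length is at most $\ceil{\frac23|X(0)|}$, then $f$ agrees on $C(0)$ with some element of $B^0$. To see this, list the vertices of $C$ cyclically as $v_0,\dots,v_{\ell-1}$ with edges $e_i=\{v_i,v_{i+1}\}$, and for each $i$ pick a decomposition $\tilde f(v_{i+1})-\tilde f(v_i)=\alpha_i+\beta_i+\gamma_i$ with $\alpha_i\in R_{v_i}$, $\beta_i\in R_{v_{i+1}}$, $\gamma_i\in R_{e_i}$ (possible since $e_i$ is good). Summing around $C$ gives $0=\sum_i(\alpha_i+\beta_i+\gamma_i)=\sum_j(\beta_{j-1}+\alpha_j)+\sum_i\gamma_i$, with $\beta_{j-1}+\alpha_j\in R_{v_j}$ and $\gamma_i\in R_{e_i}$; since $\{R_y\}_{y\in C}$ is linearly disjoint by hypothesis~(1), every summand vanishes, so $\gamma_i=0$ and $\beta_i=-\alpha_{i+1}$ for all $i$. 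Then $\tilde f(v_{i+1})-\tilde f(v_i)=\alpha_i-\alpha_{i+1}$, so $\tilde f(v_i)+\alpha_i$ is a constant $c_C$ and $\tilde f(v)-c_C\in R_v$ for all $v\in C(0)$; that is, $f$ coincides on $C(0)$ with $d_{-1}(c_C)$. Moreover, if two such short good cycles $C,C'$ share two distinct vertices $u,v$, then $c_C-c_{C'}\in R_u\cap R_v=0$ by hypothesis~(2), so $c_C=c_{C'}$; the same holds if they share a path of length $2$, using the path case of hypothesis~(1). Hence short good cycles amalgamate into clusters, and on the union of the vertex sets of any one cluster $f$ agrees with a single element of $B^0$.

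The combinatorial and spectral core is to show that bad edges carrying only $w$-weight $\eta$ cannot shatter $X$: there is a cluster of short good cycles whose vertex set $W$ is large, so that $\dist_w(f,B^0)\le\|f-d_{-1}(c)\|_w\le w\bigl(X(0)-W\bigr)$, and estimating $w\bigl(X(0)-W\bigr)$ then yields the displayed inequality after rearrangement. To produce such a $W$ I would delete the bad edges and bound, via the Expander Mixing Lemma for weighted $(r+1)$-partite graphs (Theorem~\ref{TH:EML-bipartite}), the $w$-weight of the vertices that obstruct the construction: those incident to a bad edge, those not lying on any good cycle short enough for hypothesis~(1) to apply, and those whose short good cycles fall only into individually light clusters. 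Since $X$ is a good $(r+1)$-partite spectral expander, deleting a $w$-small set of edges still leaves, over most of the vertices, a dense low-diameter good subgraph in which any two nearby vertices are joined by two internally disjoint good paths whose union is a good cycle of length comfortably below $\ceil{\frac23|X(0)|}$; the constant $\frac23$ is exactly the slack such a diameter bound can afford, and by Remark~\ref{RM:no-better-than-log} it cannot be lowered past the girth. Quantitatively, every edge count in this estimate is run through Theorem~\ref{TH:EML-bipartite} rather than Theorem~\ref{TH:EML}; its factors $\tfrac{r+1}{r}$ and $\lambda r$ are what convert the non-partite conclusion of Theorem~\ref{TH:cbe-for-quotient-sheaves} into the present bound, with its $r\lambda$ and $r^{2}\max\{|\lambda|,|\mu|\}$ terms and its hypothesis $\lambda\ge-\tfrac1r$, which is needed so that the $(r+1)$-partite expander assumption is non-vacuous and $5r+2-2r\lambda>0$. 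Apart from this substitution the argument runs in parallel with the proof of Theorem~\ref{TH:cbe-for-quotient-sheaves}.

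I expect the main obstacle to be this last combinatorial-spectral extraction: one must split the potential obstructions into a small number of explicitly describable vertex sets, bound each of their $w$-weights through the partite mixing lemma --- which, unlike its ordinary counterpart, forces a separate accounting of within-part and between-part edge masses and of the unavoidable eigenvalue $-\tfrac1r$ --- and then combine these bounds tightly enough that the leading coefficient comes out as exactly $\tfrac{2r}{5r+2}$ rather than something weaker. A secondary but genuine point is to verify that the good cycles actually needed can always be taken of length at most $\ceil{\frac23|X(0)|}$, since this is the only range in which the linear-disjointness hypotheses have been assumed.
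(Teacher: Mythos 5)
Your local pieces are right and match the paper's: the cycle computation is exactly Lemma~\ref{LM:cycle-graph-cycles}, and the merging of clusters sharing two vertices via $R_u\cap R_v=0$ is Step~2 of Lemma~\ref{LM:key-lemma}. The gap is in the quantitative core, and it is not just a detail you postponed: the route you describe cannot produce the theorem's conclusion. You propose to find one cluster $W$ with $\dist_w(f,B^0)\le w(X(0)-W)$ and to ``estimate $w(X(0)-W)$'' so that rearrangement gives the stated inequality. Since the conclusion is a pure multiplicative bound $\|d_0f\|_w\ge \veps\,\dist_w(f,B^0)$, this forces $w(X(0)-W)=O(\|d_0f\|_w)$ with \emph{no additive error}; but the mixing-lemma/blob analysis can only ever yield an additive bound. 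Indeed, maximal clusters glued at single cut vertices cannot be merged (condition (2) needs two shared vertices), and the forest structure of the cluster graph together with Theorem~\ref{TH:EML-bipartite} only gives a largest cluster of weight roughly $\frac{r}{3r+2}\bigl(1-\|d_0f\|_w-t-s-\lambda-2r\max\{|\lambda|,|\mu|\}\bigr)$, i.e.\ about a third of the total weight --- this is exactly what Lemma~\ref{LM:key-lemma} proves, and its output is only $\dist_w(f,B^0)<\frac{2r+2}{3r+2}+\cdots$, an absolute bound, not one proportional to $\|d_0f\|_w$. Your stronger claim that almost all vertices fall into a single cluster is moreover doubtful as stated: two clusters each spanning nearly half the vertices can only be joined by cycles of length close to or exceeding $\ceil{\frac{2}{3}|X(0)|}$, which is outside the range where hypothesis (1) applies, so good edges between them need not force a merge. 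Also, your list of ``obstructing'' vertices (e.g.\ those touching a bad edge) converts edge weight to vertex weight at a cost of order $1/t$, which by itself would wreck the claimed constants.

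What your sketch is missing is the paper's second mechanism, without which no choice of constants works: the local expansion property of the quotient sheaf. Lemma~\ref{LM:expansion-at-links} (this is where the path-of-length-$2$ case of hypothesis (1) enters, which you never use) shows that for each vertex $x$ and each nonzero $h\in(\aug{R}_X/\calG)(x)$ at most one incident edge kills $h$, so $w(\{e\in X(1)_{\supseteq x}: \res_{e\from x}h\neq 0\})\ge(2-t)w(x)$; Lemma~\ref{LM:small-set-expansion} then converts this, via Theorem~\ref{TH:EML}(ii), into $\|d_0f\|_w\ge(2-t-2\lambda-(2-2\lambda)\alpha)\|f\|_w$ whenever $\|f\|_w\le\alpha$. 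The paper's proof couples the two: choose a threshold $\beta$; if $\|d_0f\|_w\ge\beta$ one is done trivially, and otherwise Lemma~\ref{LM:key-lemma} bounds $\|f\|_w=\dist_w(f,B^0)$ by roughly $\frac{2r+2}{3r+2}+\frac{r}{3r+2}[\beta+t+s+\lambda+2r\max\{|\lambda|,|\mu|\}]$, which fed into Lemma~\ref{LM:small-set-expansion} gives a lower bound on $\|d_0f\|_w/\|f\|_w$; optimizing $\beta$ is precisely what produces the denominator $5r+2-2r\lambda$ and numerator $2r-4r\lambda-4r^2\max\{|\lambda|,|\mu|\}-(5r+2)t-2rs$. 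Your proposal has no step that could generate these constants, and the $\lambda\ge-\frac1r$ hypothesis is needed not to make the statement non-vacuous but so that $(X,w)$ is also a $[-1,\lambda]$-expander and Theorem~\ref{TH:EML}(ii) can be applied alongside Theorem~\ref{TH:EML-bipartite}(ii) in the cluster estimate.
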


In both theorems, if all edges in $X$ have the same weight, or if
all the subgroups $\{R_v\}_{v\in X(0)}$ are linearly disjoint,
then we can eliminate the term  $-2s$, resp. $-2rs$, in
the numerator; see Remark~\ref{RM:eliminating-s} below.
We also note that elementary analysis shows that artificially increasing
$\lambda$   will only decrease the resulting coboundary expansion.
For a statement regarding the cosystolic expansion of the sheaf $R_X/\calG$
(more precisely, its subsheaf $(R_X/\calG)_0$), 
see Corollary~\ref{CR:cse-quotient-sheaves} below.

\begin{example}
	Suppose that $X$ is a connected $k$-regular graph on $n$ vertices and $w$
	is its canonical weight function (Example~\ref{EX:regular-graph}). 
	Let $R$, $\{R_x\}_{x\in X}$,
	$\calG$,
	$t$ and $s$ be as in   Theorem~\ref{TH:cbe-for-quotient-sheaves}.
	Then
	$t=\frac{2/kn}{1/n}=\frac{2}{k}$ and $s=\frac{2}{kn}$; in fact, we can ignore $s$
	because all edges have the same weight.
	Let $\rho\in [0,1]$ be a number with $|\lambda|\leq \rho$ for any eigenvalue $\lambda\neq \pm 1$  
	of $\calA_{X,w}$. Recall that $X$ is called a \emph{Ramanujan graph} if
	we can take $\rho= \frac{2\sqrt{k-1}}{k}$; see \cite{Davidoff_2003_Ramanujan_graphs}
	(for instance)
	for details and motivation for this definition.

	If $X$ is not bipartite, then $(X,w)$ is a $[-\rho,\rho]$-expander (see~\S\ref{subsec:expansion}), 
	and Theorem~\ref{TH:cbe-for-quotient-sheaves} implies that
	$\cb_0(X,w,\aug{R}/\calG)\geq \frac{2-8\rho-10/k}{5}=\frac{2}{5}-\frac{8}{5}\rho-\frac{2}{k}$.
	If $X$ is a Ramanujan graph, for instance, then $\cb_0(X,w,\aug{R}/\calG)\geq
	\frac{2}{5}- \frac{16\sqrt{ k-1 }}{5k}-\frac{2}{k}=\frac{2}{5}-O(\frac{1}{\sqrt{k}})$
	
	If $X$ is   bipartite, then $(X,w)$ is a $2$-partite $[-\rho,\rho]$-expander
	and Theorem~\ref{TH:cbe-for-partite-quotient-sheaves}	 says that
	$\cb_0(X,w,\aug{R}/\calG)\geq \frac{2-8\rho-14/k}{7}=\frac{2}{7}-\frac{8}{7}\rho -\frac{2}{k}$.
	Again, taking $X$ to be a bipartite Ramanujan graph gives
	$\cb_0(X,w,\aug{R}/\calG) \geq 	\frac{2}{7}- \frac{16\sqrt{ k-1 }}{7k} -\frac{2}{k}
	=\frac{2}{7}-O(\frac{1}{\sqrt{k}})$.
\end{example}

The rest of this section is dedicated to proving Theorems~\ref{TH:cbe-for-quotient-sheaves}
and~\ref{TH:cbe-for-partite-quotient-sheaves}.
We prove both theorems together in  a series of lemmas.

\begin{lem}\label{LM:small-set-expansion}
	Let $(X,w)$ be a weighted   graph and let $\calF$
	be a  sheaf on $X$.
	Let $\veps\in \R_+$, let $\lambda\in \R$
	and 
	suppose that $(X,w)$ is a $[-1,\lambda]$-expander,
	and   for every $v\in X(0)$ and $h\in \calF(v)-\{0\}$,
	we have 
	\[w(\{e\in X(1)_{\supseteq v}\suchthat \res_{e\from v}h(v)\neq 0\})\geq \veps w(v) .\]
	Let $f\in C^0(X,\calF)$ and $\alpha \in [0,\infty)$. If
	$\|f\|_w\leq \alpha$, then
	\[(\veps - 2\lambda -(2-2\lambda)\alpha)\|f\|_w\leq \|d_0 f\|_w . \]
\end{lem}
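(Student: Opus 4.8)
The plan is to bound $\dist_w(f, B^0)$ from below by a constant multiple of $\|f\|_w$ and then invoke the coboundary-expansion inequality for the \emph{constant} sheaf, which is available by Corollary~\ref{CR:expanders-are-cb-exps} (or directly Theorem~\ref{TH:expanders-are-cb-exps}(i) together with Theorem~\ref{TH:Cheeger}). Concretely, write $A=\supp f\subseteq X(0)$, so $\|f\|_w = w(A) \le \alpha$. The idea is to compare $\|d_0 f\|_w$, measured for $\calF$, with the quantity $w(E(A, X(0)-A))$, which controls coboundary expansion for a constant sheaf. Each edge $e\in X(1)$ contributes to $\supp(d_0 f)$ unless $\res_{e\from e^+}f(e^+)=\res_{e\from e^-}f(e^-)$; in particular, if exactly one endpoint of $e$ lies in $A$, say $e^+\in A$ and $e^-\notin A$, then $d_0 f(e) = \res_{e\from e^+}f(e^+)$, which by hypothesis is nonzero for a set of such edges of weight $\ge \veps\, w(e^+)$ as we range over $e\in X(1)_{\supseteq e^+}$. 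Summing the local hypothesis over $v\in A$ gives a lower bound on the weight of ``boundary'' edges (one endpoint in $A$) that lie in $\supp(d_0 f)$, at least after accounting for double-counting.

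The key steps, in order, are as follows. First, for each $v\in A$ let $N(v) = \{e\in X(1)_{\supseteq v} : \res_{e\from v}f(v)\neq 0\}$; by hypothesis $w(N(v))\ge \veps\, w(v)$. Second, split $N(v)$ into edges going to $X(0)-A$ (call this $N^{\mathrm{out}}(v)$) and edges going to another vertex of $A$ ($N^{\mathrm{in}}(v)$). An edge in $N^{\mathrm{out}}(v)$ automatically lies in $\supp(d_0 f)$, since its other endpoint contributes $0$. Summing, $\sum_{v\in A} w(N^{\mathrm{out}}(v)) \le \|d_0 f\|_w$ (no double counting, as each such edge has a unique endpoint in $A$). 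For the ``in'' part, note $\sum_{v\in A}w(N^{\mathrm{in}}(v)) \le 2\,w(E(A)) = 2(w(E(A))-w(A)^2) + 2w(A)^2 \le 2\lambda w(A)(1-w(A)) + 2w(A)^2$ by Theorem~\ref{TH:EML}(ii). Third, combine: $\veps\, w(A) \le \sum_{v\in A} w(N(v)) = \sum_v w(N^{\mathrm{out}}(v)) + \sum_v w(N^{\mathrm{in}}(v)) \le \|d_0 f\|_w + 2\lambda w(A)(1-w(A)) + 2w(A)^2$. Rearranging and using $w(A)\le\alpha$ together with $(1-w(A))\le 1$ and $w(A)^2 = w(A)\cdot w(A)\le \alpha\cdot w(A)$:
\[
\|d_0 f\|_w \ge \veps\, w(A) - 2\lambda\, w(A) - 2w(A)^2 \ge \bigl(\veps - 2\lambda - (2-2\lambda)\alpha\bigr)\, w(A),
\]
where the last inequality uses $-2w(A)^2 \ge -2\alpha w(A)$ and, if $\lambda\ge 0$, also $2\lambda w(A)(1-w(A)) \le 2\lambda w(A) - 2\lambda w(A)^2 \le 2\lambda w(A) + 2\lambda\alpha w(A)$ — one must be slightly careful with the sign of $\lambda$, but the stated bound absorbs both cases since $(2-2\lambda)\alpha$ is exactly $2\alpha - 2\lambda\alpha$. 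Since $\|f\|_w = w(A)$, this is the desired inequality.

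The main obstacle I anticipate is the bookkeeping around the term $w(E(A))$: one needs the Expander Mixing Lemma bound $w(E(A)) \le w(A)^2 + \lambda w(A)(1-w(A))$ (Theorem~\ref{TH:EML}(ii)) and to track the factor of $2$ coming from the fact that each internal edge of $A$ is counted from both its endpoints in $\sum_{v\in A}w(N^{\mathrm{in}}(v))$, while each boundary edge is counted once. A secondary subtlety is that the hypothesis is stated for $h\in\calF(v)-\{0\}$ applied with $h = f(v)$, which requires $f(v)\neq 0$, i.e.\ $v\in\supp f$ — so the summation really is only over $v\in A$, which is exactly what makes $\|d_0 f\|_w$ (rather than $\|d_0 f\|$ over all edges) the right upper bound for the ``out'' contribution. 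Everything else is elementary manipulation of the resulting inequality, and the $[-1,\lambda]$-expander hypothesis is used precisely (and only) through Theorem~\ref{TH:EML}(ii).
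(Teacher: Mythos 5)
Your proof is correct and follows essentially the same route as the paper: splitting, for each $v\in\supp f$, the edges $e\ni v$ with $\res_{e\from v}f(v)\neq 0$ into those inside $\supp f$ (bounded via the double-count $2w(E(A))$ and Theorem~\ref{TH:EML}(ii)) and those crossing to the complement (which lie in $\supp(d_0f)$), then rearranging. The introductory remark about reducing to the constant-sheaf coboundary expansion is not what your argument actually uses and could simply be dropped; the paper likewise argues directly, after first normalizing $\alpha=\|f\|_w$.
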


\begin{proof}
	Write $A=\supp f$ and $A^c=X(0)-A$. Since decreasing $\alpha$ increases
	the left hand side of the desired inequality, it is enough
	to prove the lemma for $\alpha=\|f\|_w$.  

	Fix some $v\in X(0)$ with $f(v)\neq 0$. 
	We claim that for every $e\in X(1)_{\supseteq v}$ with $\res_{e\from v} f(v)\neq 0$,
	at least one of the following hold:
	\begin{enumerate}[label=(\roman*)]
		\item $e\in E(A)$,
		\item $e\in \supp(d_0 f)\cap E(A,A^c)$.
	\end{enumerate}
	Indeed, we have $(d_0f)(e) = \pm  \res_{e\from v} f(v)\mp \res_{e\from e-v}f(e-v)  $. 
	If $ f(e-v)\neq 0$, then $v$ and $e-v$ are in $\supp f$,
	and $e\in E(A)$. Otherwise,  $ f(e-v)= 0$,
	so $(d_0f)(e)=\pm \res_{e\from v} f(v)\neq 0$ and $e\in\supp(d_0f)\cap E(A,A^c)$.
	This means that
	\begin{align*}
	w(\{e\in X(1)_{\supseteq v}\suchthat \res_{e\from v}f(v)\neq 0\})
	\leq
	w(E(A)_{\supseteq v})+w([\supp(d_0f)\cap E(A,A^c)]_{\supseteq v}).
	\end{align*}
	
	By assumption, the left hand side of the last inequality
	is at least $\veps w(v)$. 
	Summing  over all $v\in \supp f$, we get
	\begin{align*}
		\veps \|f\|_w & \leq \sum_{v\in A}
		w(E(A)_{\supseteq v}) +
		\sum_{v\in A} w([\supp(d_0f)\cap E(A,A^c)]_{\supseteq v})\\
		& \leq 2w(E(A))+w(\supp d_0 f)
		\leq 2(\alpha^2+\lambda\alpha(1-\alpha))+\|d_0 f\|_w .
	\end{align*}
	Here, the second inequality holds because every edge in $E(A)$ is counted exactly twice 
	and every edge in $\supp (d_0 f)$ is counted at most once, whereas the third inequality
	follows from Theorem~\ref{TH:EML}(ii). By rearranging, we find that
	\[
	\|d_0 f\|_w \geq \veps \|f\|_w-2 \alpha^2 - 2 \lambda\alpha(1-\alpha) =
	(\veps - 2\lambda -2\alpha+2\lambda \alpha)\|f\|_w. \qedhere
	\]
\end{proof}

\begin{lem}\label{LM:expansion-at-links}
	Let $(X,w),R,\{R_x\}_{x\in X},\calG, t$
	be as in  
	Theorem~\ref{TH:cbe-for-quotient-sheaves} or Theorem~\ref{TH:cbe-for-partite-quotient-sheaves},
	and let $\calF=\aug{R}/\calG$.
	If $\{R_y\}_{y\in Y}$ are linearly disjoint
	in $R$ for every path $Y\subseteq X$ of length $2$, 
	then
	for every $v\in X(0)$ and $h\in \calF(v)-\{0\}$,
	we have 
	\[w(\{e\in X(1)_{\supseteq v}\suchthat \res_{e\from v}h \neq 0\})\geq (2-t) w(v) .\]
\end{lem}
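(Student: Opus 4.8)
The plan is to reduce the statement to a purely combinatorial claim: for a fixed $x\in X(0)$ and a nonzero $h\in\calF(x)$, there is \emph{at most one} edge $e\in X(1)_{\supseteq x}$ with $\res^{\calF}_{e\from x}h=0$. Granting this, the conclusion follows immediately: by \eqref{EQ:weight-of-containing-cells} (with $k=0$, $\ell=1$) we have $w(X(1)_{\supseteq x})=2w(x)$, and if $e_0$ is the unique "bad" edge then $w(e_0)\leq t\cdot w(x)$ by the definition of $t$ in Theorem~\ref{TH:cbe-for-quotient-sheaves}; hence $w(\{e\in X(1)_{\supseteq x}\suchthat \res_{e\from x}h\neq 0\})\geq 2w(x)-t\,w(x)=(2-t)w(x)$ (and if no bad edge exists the bound is trivial).

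First I would unwind the definitions of $\calG$ and of the quotient sheaf $\calF=\aug{R}/\calG$. Since $R_\emptyset=0$, for $x\in X(0)$ we get $\calG(x)=\sum_{y\subseteq x}R_y=R_x$, so $\calF(x)=R/R_x$; write $h=r+R_x$ with $r\in R\setminus R_x$. For an edge $e=\{x,v\}$ we have $\calG(e)=R_x+R_v+R_e$, and the restriction map $\res^{\calF}_{e\from x}\colon R/R_x\to R/\calG(e)$ is the one induced by $\id_R$. Consequently $\res_{e\from x}h=0$ holds if and only if $r\in R_x+R_v+R_e$.

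Now comes the key step. Suppose, for contradiction, that $e_1,e_2\in X(1)_{\supseteq x}$ are distinct and satisfy $\res_{e_i\from x}h=0$. Put $v_i=e_i-x$; since $X$ has no double edges, $e_1\neq e_2$ forces $v_1\neq v_2$, so $v_1,x,v_2$ together with the edges $e_1,e_2$ form a path $Y\subseteq X$ of length $2$. By hypothesis the subgroups $\{R_{v_1},R_x,R_{v_2},R_{e_1},R_{e_2}\}$ are linearly disjoint, i.e.\ the summation map $R_{v_1}\oplus R_x\oplus R_{v_2}\oplus R_{e_1}\oplus R_{e_2}\to R$ is injective. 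From $r\in R_x+R_{v_1}+R_{e_1}$ and $r\in R_x+R_{v_2}+R_{e_2}$ I obtain two tuples in this direct sum both mapping to $r$; by injectivity they agree coordinatewise, which annihilates the $R_{v_1},R_{v_2},R_{e_1},R_{e_2}$ components and leaves $r\in R_x$, contradicting $h\neq 0$ in $R/R_x$. Hence at most one bad edge exists, which is what was needed.

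The argument is short and I do not expect a genuine obstacle; the only points requiring care are (i) reading off $\calG(x)$, $\calG(e)$ and the restriction maps of a quotient sheaf correctly, and (ii) noting that the "no double edges" convention of \S\ref{subsec:complexes} is exactly what guarantees $v_1\neq v_2$, so that the configuration really is a length-$2$ path and the hypothesis of the lemma applies.
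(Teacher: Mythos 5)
Your proposal is correct and follows essentially the same route as the paper: show that at most one edge at $x$ can kill $h$, by taking two putative bad edges, forming the length-$2$ path through $x$, and using linear disjointness of the five subgroups along it to force the representative $r$ into $R_x$, contradicting $h\neq 0$; the weight bound $2w(x)-w(e_0)\geq(2-t)w(x)$ then finishes as in the paper. Your coordinatewise comparison in the direct sum is just a rephrasing of the paper's intersection argument $(R_y+R_x+R_{x'})\cap(R_z+R_x+R_{x''})=R_x$.
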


\begin{proof}
	Let $v \in X(0)$ and $h\in \calF(v)-\{0\}$. 
	If $\res_{e\from v} h\neq 0$ for all $e\in X(1)_{\supseteq v}$,
	then $w(\{e\in X(1)_{\supseteq v}\suchthat \res_{e\from v}h \neq 0\})= w(X(1)_{\supseteq v})=2w(v)$
	because $w$ is a weight function (see~\S\ref{subsec:weights}), and the lemma holds. 
	
	Suppose now that there exists $y \in X(1)_{\supseteq v}$ such that $\res_{y\from v} h=0$.
	We claim that  $\res_{z\from v} h\neq 0$ for all $z\in X(1)_{\supseteq v}$ different from $y$.
	Fix such $z$ and let $v'=y-v$ and $v''=z-v$. Then $Y=\{\emptyset,v,v',v'',y,z\}$
	is a path of length $2$ in $X$, meaning  that $R_\emptyset=0,R_v,R_{v'},R_{v''},R_y,R_z$
	are linearly disjoint in $R$.
	Choose $g\in R$ with $h(v)=g+R_v $ (note that $\calG(v)=R_v+R_\emptyset=R_v$).
	Since $\res_{y\from v} h=0$, we have $g\in R_y+R_v+R_{v'} $.
	Likewise, if $\res_{z\from v} h=0$,
	then $g\in R_z+R_v+R_{v''} $.
	Consequently,
	$g\in (	R_y+R_v+R_{v'})\cap (R_z+R_v+R_{v''})=R_v  =\calG(v)$,
	but this contradicts our assumption that $h=g+\calG(v)\neq 0$ in $\calF(v)$.
	Thus, we must have $\res_{z\from v}h\neq 0$.
	As this holds for all $z\neq y$, we conclude that
	$\{e\in X(1)_{\supseteq v}\suchthat \res_{e\from v}h \neq 0\}=X(1)_{\supseteq v}-\{y\}$.
	Since $w(X(1)_{\supseteq v})-w(y)=2w(v)- w(y)\geq 2w(v)- t w(v)=
	(2- t)w(v)$, we are done.
\end{proof}

\begin{lem}\label{LM:weight-of-subforest}
	Let $(X,w)$ be a weighted graph with $n$ vertices. 
	Define $t$ and $s$ as in Theorem~\ref{TH:cbe-for-quotient-sheaves}
	and let $T$ be a subgraph of $X$.
	\begin{enumerate}[label=(\roman*)]
		\item If $w(T(1))\geq t$, then $T$ contains a cycle.
		\item If $w(T(1))\geq t+s$, then $T$ contains a cycle   of length
		$\leq \ceil{\frac{2}{3} n}$.
	\end{enumerate}	 
\end{lem}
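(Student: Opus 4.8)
The plan is to read off $w(T(1))$ from the combinatorial type of $T$, using only the weight-function identities $w(X(0))=1$ and $w(X(1)_{\supseteq x})=2w(x)$ together with the pointwise bounds $w(e)\le t\,w(x)$ (whenever $x$ is a vertex of $e$) and $w(e)\le s$. For \emph{part (i)} I would prove the contrapositive: if $T$ is a forest then $w(T(1))<t$. Root each tree-component of $T$ at a vertex of maximum weight and map each edge $e\in T(1)$ to its endpoint farther from the root of its component; this is an injection $T(1)\hookrightarrow T(0)$ whose image misses the roots, so $w(T(1))=\sum_{e\in T(1)}w(e)\le t\sum_{e\in T(1)}w(\mathrm{child}(e))\le t\bigl(w(T(0))-\sum_{\text{roots }r}w(r)\bigr)\le t\bigl(1-\sum_r w(r)\bigr)$. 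Every component of $T$ carrying an edge of positive weight has a positive-weight vertex (since $w(v)\ge\tfrac12 w(e)$ for $e\ni v$), hence a positive-weight root, while components with all edge-weights $0$ contribute nothing to $w(T(1))$; either way $w(T(1))<t$.

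For \emph{part (ii)}, the statement is vacuous unless $s>0$, in which case $w(T(1))\ge t+s>t$, so part (i) forces $T$ to contain a cycle and the circuit rank $\beta(T):=|T(1)|-|T(0)|+(\text{number of components of }T)$ is $\ge 1$. If $\beta(T)=1$, pick a cycle $C$ of $T$ and an edge $e\in C(1)$; since $e$ lies on a cycle, $T\setminus\{e\}$ has circuit rank $0$, i.e.\ it is a spanning forest of $T$, so by part (i) $w\bigl((T\setminus\{e\})(1)\bigr)<t$, whence $w(T(1))<t+w(e)\le t+s$, contradicting the hypothesis. Hence $\beta(T)\ge 2$, and (ii) reduces to the purely graph-theoretic claim that \emph{every graph with at most $n$ vertices and circuit rank $\ge 2$ contains a cycle of length $\le\ceil{\frac{2}{3}n}$}.

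To prove that claim I would pass to the $2$-core of $T$ (delete vertices of degree $\le 1$ repeatedly), which preserves the circuit rank and leaves a nonempty graph $T'$ of minimum degree $\ge 2$ with $\beta(T')\ge 2$. Such a $T'$ contains one of: (a) two vertex-disjoint cycles; (b) two cycles meeting in exactly one vertex; (c) a theta-subgraph, i.e.\ two vertices joined by three internally disjoint paths. Indeed, starting from any cycle and walking out along edges off the cycle, the walk never gets stuck (every vertex has degree $\ge 2$) and must either close up away from the cycle, giving (a), or return to it, giving (b) or (c). Ordering the relevant cycle lengths as $\ell_1\le\ell_2\le\ell_3$ (with $\ell_3$ absent in case (a)), the configuration uses at most $n+1$ vertices, so in case (c) one has $\ell_1+\ell_2+\ell_3\le n+1$ and $\ell_1+\ell_2\le 2\ell_3$, hence $\ell_1+\ell_2\le\tfrac{2}{3}(n+1)$, whereas in cases (a),(b) one has $\ell_1+\ell_2\le n+1$, hence $\ell_1\le\tfrac12(n+1)\le\tfrac{2}{3}(n+1)$. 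As a cycle length is an integer and $\lfloor\tfrac{2}{3}(n+1)\rfloor=\lfloor\tfrac{2n+2}{3}\rfloor=\ceil{\frac{2}{3}n}$, this produces a cycle of length $\le\ceil{\frac{2}{3}n}$, finishing (ii).

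The main obstacle is this last combinatorial claim: both the trichotomy for graphs of circuit rank $\ge 2$ and the bookkeeping showing the shortest cycle in each configuration stays at most $\ceil{\frac{2}{3}n}$, the theta case (where $\ell_1+\ell_2\le 2\ell_3$ is used) being the tight one. Everything else is a direct manipulation of the weight axioms.
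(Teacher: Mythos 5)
Your proposal is correct, and it is essentially the paper's argument in lightly repackaged form. Part (i) is the same proof: root the trees, inject edges into non-root vertices, and use $w(e)\le t\,w(v)$ (your handling of the strictness via positive-weight roots is, if anything, a bit more careful than the paper's). In part (ii) the paper also deletes one edge of a first cycle and reapplies (i) to produce a second cycle $C_2$ not containing that edge, then analyzes $C_1\cup C_2$ via Lemma~\ref{LM:cycle-minus-subgraph}, splitting into exactly your three configurations (disjoint cycles, cycles sharing one vertex, a theta) and doing the same count $|R_1(0)|+|R_2(0)|+|R_3(0)|\le 2(n+1)$, so some cycle has length $\le\floor{\frac{2}{3}(n+1)}=\ceil{\frac{2}{3}n}$; your only genuine variation is to isolate the weight-free statement ``circuit rank $\ge 2$ implies a cycle of length $\le\ceil{\frac{2}{3}n}$'' and prove it via the $2$-core and a walk, which is a clean standalone lemma but the same mathematics. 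One notational slip to fix: in your case (c) the quantities $\ell_1\le\ell_2\le\ell_3$ must be the lengths of the three internally disjoint \emph{paths} of the theta (so that the subgraph has $\ell_1+\ell_2+\ell_3-1\le n$ vertices and the shortest cycle has length $\ell_1+\ell_2$), not ``cycle lengths'' as written — with cycle lengths the inequality $\ell_1+\ell_2+\ell_3\le n+1$ is false (the correct bound is $\le 2(n+1)$, which still gives a shortest cycle of length $\le\frac{2}{3}(n+1)$, as in the paper). With that reading the arithmetic $\ell_1+\ell_2\le\frac{2}{3}(n+1)$ and the floor/ceiling identity are exactly the paper's.
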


\begin{proof}
	(i) It is enough to show that if $T$ contains no cycles, then $w(T(1))< t$.
	In this case, $T$  is a forest, i.e.,
	every connected component of $T$ is a tree. 
	Choose roots for the trees in $T$ and denote the set of roots by $R\subseteq V(X)$.
	For $v\in V(X)-R$, we let $p(v)\in V(X)$ denote the parent of $v$ in $T$.
	Then $w(T(1))=\sum_{v\in V(X)-R} w(\{v,p(v)\})\leq
	\sum_{v\in V(X)-R} t w(\{v\})<\sum_{x\in X(0)} t w(x)=t$. 

	(ii) By (i), $T$ contains a cycle $C_1$. Choose an edge $e_1$ in $C_1$.
	Then $w(e_1)\leq  s$. Let $T'=T-\{e_1\}$, i.e.,   the graph obtained
	from $T$ by removing the edge $e_1$. Then $w(T'(1))= w(T(1))-w(e)\geq t+s-s=t$,
	so by (i), $T'$ contains another cycle, $C_2$,
	and $e_1\notin C_2$.
	If $C_1(0)\cap C_2(0)=\emptyset$, then one of $C_1$, $C_2$
	has less than $\frac{1}{2}n$ vertices, and is therefore the required
	cycle.
	
	Suppose now that $C_1(0)\cap C_2(0)\neq \emptyset$.	
	By Lemma~\ref{LM:cycle-minus-subgraph},   $C_1-C_2$ is a nonempty disjoint union of open paths.
	Let $Z$ be one these paths, and let $x,y$ denote its end points.
	If $x=y$, then $C_1$ and $C_2$ share exactly one vertex.
	Thus,   $|C_1(0)|+|C_2(0)|\leq n+1$, and again,
	one of   $C_1$, $C_2$   is the required
	cycle.
	Assume $x\neq y$. Then $x,y\in  C_2(0)$.
	Let $P$ and $Q$ denote the two paths
	from $x$ to $y$ contained in  $C_2$ and put
	$p=|P(0)|$, $q=|Q(0)|$, $z=|Z(0)|$.
	Since $P(0)-\{x,y\}$, $Q(0)$ and $Z(0)$
	are pairwise disjoint, we have
	$p+q+z\leq n+2$.
	Let
	$R_1:=P\cup Q$, $R_2:=P\cup Z$ and $R_3:=Q\cup Z$.
	Then $R_1,R_2,R_3$ are cycles, and   
	$|R_1(0)|+|R_2(0)|+|R_3(0)|=(p+q-2)+(p+z)+(q+z)=2(p+q+z)-2\leq
	2(n+1)$.
	This means that there is $i\in\{1,2,3\}$
	such that $|R_i(0)|\leq \floor{\frac{2}{3}(n+1)}=\ceil{\frac{2}{3}n}$, so we are done.
\end{proof}

\begin{remark}
	The proof of Lemma~\ref{LM:weight-of-subforest}(ii) also shows that the girth of a graph 
	with
	$n$ vertices and $n+1$ edges is at most $\ceil{\frac{2}{3}n}$. This bound is tight, e.g., consider
	a graph $X$ obtained by gluing $3$ closed paths of length $k$ or $k-1$ at their endpoints. 
\end{remark}

\begin{lem}\label{LM:diameter-bound}
	Let $X$ be a connected graph which is a union of its cycle subgraphs.
	Then every two vertices in $X$ can be connected by a path
	of length $\leq \frac{2}{3}(|X(0)|-1)$.
\end{lem}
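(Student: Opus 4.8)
The plan is to run a breadth‑first search from one of the two vertices and control the sizes of the BFS layers. Observe first that $X$ being a union of its cycle subgraphs means precisely that every edge of $X$ lies on a cycle, i.e.\ $X$ has no bridge; if $X$ has no edges it is a single vertex and there is nothing to prove, so I may assume $X$ is connected and bridgeless. Fix $u,v\in V(X)$, put $\ell=\dist(u,v)$ and $n=|X(0)|$; producing a $u$–$v$ path of length $\le\frac23(n-1)$ amounts to showing $\ell\le\frac23(n-1)$. For $j\ge 0$ set $L_j=\{x\in V(X):\dist(u,x)=j\}$. Taking a geodesic from $u$ to $v$ shows $L_0,\dots,L_\ell$ are all nonempty, and they are pairwise disjoint, so $n\ge\sum_{j=0}^{\ell}|L_j|$.

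The key claim would be that \emph{no two consecutive layers $L_j,L_{j+1}$ with $0\le j\le \ell-1$ are both singletons}. To prove it I would suppose $L_j=\{a\}$ and $L_{j+1}=\{b\}$. Since $\dist(u,b)=j+1$, the vertex $b$ has a neighbour at distance $j$, so $ab\in E(X)$; as $X$ is bridgeless, $ab$ lies on a cycle $C$, and $Q:=C-\{ab\}$ is an $a$–$b$ path of length $\ge 2$ whose interior avoids $a$ and $b$. Every neighbour of $a$ has distance $j-1$, $j$ or $j+1$ from $u$, and $L_j\cup L_{j+1}=\{a,b\}$, so the vertex of $Q$ immediately after $a$ (it exists and differs from $b$ since $Q$ has length $\ge 2$) must lie in $L_{j-1}$; in particular $L_{j-1}\neq\emptyset$, so $j\ge 1$ — when $j=0$ this is already absurd, recovering the familiar fact $|L_1|=|N(u)|\ge 2$ for bridgeless graphs. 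For $j\ge 1$, the sub‑path of $Q$ from that vertex to $b$ starts in layer $j-1$ and ends in layer $j+1$, and since the distance to $u$ changes by at most $1$ along any edge, it must contain a vertex of $L_j=\{a\}$; but $a$ occurs on the simple path $Q$ only as its first vertex, a contradiction.

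Granting the claim, the singleton layers among the $\ell+1$ layers $L_0,\dots,L_\ell$ are pairwise non‑consecutive, hence number at most $\lceil(\ell+1)/2\rceil\le\frac{\ell}{2}+1$, while every other layer contributes at least $2$ vertices; therefore $n\ge\sum_{j=0}^{\ell}|L_j|\ge 2(\ell+1)-(\frac{\ell}{2}+1)=\frac32\ell+1$, which rearranges to $\ell\le\frac23(n-1)$, as desired. I expect the only genuinely delicate point to be the cycle argument in the claim — in particular the observation that the detour $Q$ around the edge $ab$ cannot re‑enter the singleton layer $L_j$ and so cannot pass from the far side of that layer back to the near side; the rest is routine bookkeeping. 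One should also note there is no slack to waste: a chain of $k$ squares glued consecutively at opposite corners has $n=3k+1$ vertices and diameter exactly $2k=\frac23(n-1)$, so the counting above must be done tightly.
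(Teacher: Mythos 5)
Your proof is correct, and it takes a genuinely different route from the paper. The paper's argument covers a path between the two vertices by a chain of cycles $R_1,\dots,R_t$ and inducts on $t$: when two consecutive cycles share at least two vertices it merges them into a single larger cycle (via Lemma~\ref{LM:cycle-minus-subgraph}), and when they meet in single vertices it bounds the detour inside each cycle by $\floor{\frac{1}{2}(r_i+1)}\leq \frac{2}{3}r_i$ and sums, using that the sets $R_i(0)-\{x_i\}$ are pairwise disjoint. You instead translate the hypothesis ``union of its cycle subgraphs'' into bridgelessness and run a BFS-layer count from one endpoint: the key claim that no two consecutive layers $L_j,L_{j+1}$ can both be singletons is proved correctly (the detour around the edge $ab$ given by a cycle through it would have to re-enter the singleton layer $L_j$ at a vertex other than $a$, by the intermediate-value argument on distances, which is impossible since the detour is a simple path), and the count $n\geq 2(\ell+1)-(\frac{\ell}{2}+1)=\frac{3}{2}\ell+1$ gives exactly $\ell\leq\frac{2}{3}(n-1)$. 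Your approach is shorter and avoids the paper's cycle-merging induction and its disjointness bookkeeping, while yielding the same sharp constant (and your glued-squares example confirms tightness, consistent with the remark after Lemma~\ref{LM:weight-of-subforest}); the paper's approach, on the other hand, works directly with the cycle structure that is used elsewhere in the proof of Lemma~\ref{LM:key-lemma} (blobs as unions of cycles), so it stays closer to the objects the lemma is applied to. Either argument suffices for the application in Step~4 of that proof.
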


\begin{proof}
	Write $n=|X(0)|$, and
	let $x,y\in X(0)$ be distinct $0$-faces. 
	Since $X$ is connected, there is path from $x$ to $y$
	and our assumption on $X$
	implies that this path is contained in a union of
	cycles.
	This means that  there
	are cycles $R_1,\dots,R_t$  such that $x\in R_1(0)$, $y\in R_t(0)$
	and there exists $x_i\in R_i(0)\cap R_{i+1}(0) $ for all 
	$i\in \{1,\dots,t-1\}$. Set $x_0=x$ and $x_t=y$.
	We prove the lemma by induction on $t$.	
	
	If $t=1$, then $x,y\in R_1$, so there is a path from $x$ to $y$
	of length at most $\floor{\frac{1}{2} |R_1(0)|} \leq \floor{\frac{1}{2} n} \leq \frac{2}{3}(n-1)$
	(because $R_1$, and hence $X$, has at least $3$ vertices). Suppose henceforth that
	$t>1$.
	
	If $R_i \cap R_j \neq \emptyset$ for some $i,j\in \{1,\dots,t\}$ with $i+2\leq j$,
	then we can remove $R_{i+1},\dots,R_{j-1}$ from
	$R_1,\dots,R_t$ and finish by the induction hypothesis.
	We  may therefore assume that   $R_i\cap R_j=\emptyset$ whenever $i+2\leq j$.

	Next, if $|R_{i}(0)\cap R_{i+1}(0)|=1$ for all
	$i\in\{1,\dots,t-1 \}$, 
	Then $R_{i }(0)\cap R_{i+1}(0)=\{x_i\}$ for all  $i\in\{1,\dots,t-1\}$.
	Since $R_i\cap R_j=\emptyset$ when $i+2\leq j$,
	this means that the sets $R_1(0)-\{x_1\},\dots,R_t(0)-\{x_t\}$ are pairwise disjoint.
	Thus, writing $r_i=|R_i(0)|-1$, we have $\sum_{i=1}^tr_i\leq |X(0)-\{x_t\}|=n-1$.
	Let $i\in \{1,\dots,t\}$.
	Since $R_i$ is a cycle with $r_i+1$ vertices,  there is a closed path $P_i$  
	from $x_{i-1}$ to $x_i$ of length at most $\floor{\frac{1}{2} (r_i+1)} \leq \frac{2}{3}r_i$
	(because $r_i\geq 2$). The union $P_1\cup \dots\cup P_t$
	is a path from $x_0=x$ to $x_t=y$
	of length at most $\sum_{i=1}^t\frac{2}{3}r_i\leq  \frac{2}{3}(n-1)$.
	
	Finally, if  there is $i\in\{1,\dots,t-1\}$
	such that $R_{i}$ and $R_{i+1}$ share at least $2$ vertices,
	then $R_i-R_{i+1}$ is a disjoint union of open paths and each of these paths has
	distinct end points (cf.\ Lemma~\ref{LM:cycle-minus-subgraph}). 
	Of these open paths, let $P$ be denote the one containing $x_{i-1}$ and let $z,w$
	be its endpoints. Then $z,w\in R_{i+1}(0)$.
	Let $Q$ denote a closed path from $z$ to $w$ in $R_{i+1}$ 
	which also includes $x_{i+1}$. Then $R':=P\cup Q$ is a cycle
	containing both $x_{i-1}$ and $x_{i+1}$. We replace $R_i,R_{i+1}$ with $R'$
	and proceed by induction on $t$.
\end{proof}

\begin{lem}\label{LM:cycle-graph-cycles}
	Let $X$ be a cycle graph, let $R$, $\{R_x\}_{x\in X}$ and $\calG$
	be as in Theorem~\ref{TH:cbe-for-quotient-sheaves}
	and suppose that 
	all the $\{R_x\}_{x\in X}$ are linearly disjoint in $R$.	
	Then 
	for every $f\in Z^0(X,R_X/\calG)$, there
	exists 
	$h\in R$ such that $f(v)=h+R_v$ for all $v\in X(0)$.
\end{lem}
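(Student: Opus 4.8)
The plan is to reduce the statement to the linear disjointness hypothesis by lifting $f$ to $R$ and summing around the cycle. Write $\ell=|X(0)|$ and enumerate the vertices as $v_1,\dots,v_\ell$ so that the edges of $X$ are precisely $e_i=\{v_i,v_{i+1}\}$ for $i=1,\dots,\ell$, with indices read modulo $\ell$ (so $v_{\ell+1}=v_1$). First I would lift the values of $f$: since $\calG(v)=R_v$ for every vertex $v$, for each $i$ there is $g_i\in R$ with $f(v_i)=g_i+R_{v_i}$. The restriction map $\calF(v_i)\to\calF(e_i)$ of $\calF:=\aug{R}_X/\calG$ is the canonical projection $R/R_{v_i}\to R/\calG(e_i)$, where $\calG(e_i)=R_{v_i}+R_{v_{i+1}}+R_{e_i}$; hence $(d_0f)(e_i)=\pm\big((g_{i+1}-g_i)+\calG(e_i)\big)$, the sign depending on the fixed ordering $L$ but being irrelevant. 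Thus $f\in Z^0(X,\aug{R}_X/\calG)$ means exactly that $g_{i+1}-g_i\in R_{v_i}+R_{v_{i+1}}+R_{e_i}$ for every $i$, so I may write $g_{i+1}-g_i=\alpha_i+\beta_i+\gamma_i$ with $\alpha_i\in R_{v_i}$, $\beta_i\in R_{v_{i+1}}$, $\gamma_i\in R_{e_i}$.

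Next I would sum these identities over $i=1,\dots,\ell$. The left-hand side telescopes to $0$. Regrouping the right-hand side according to which subgroup each summand lies in --- $\alpha_i$ contributes to $R_{v_i}$, $\beta_i$ to $R_{v_{i+1}}$, and $\gamma_i$ to $R_{e_i}$ --- one obtains $\sum_{j=1}^\ell(\alpha_j+\beta_{j-1})+\sum_{i=1}^\ell\gamma_i=0$, which exhibits $0$ as the image of a tuple in $\bigoplus_{x\in X}R_x$ (with $v_j$-entry $\alpha_j+\beta_{j-1}$, $e_i$-entry $\gamma_i$, and $\emptyset$-entry $0$) under the summation map. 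Since $\{R_x\}_{x\in X}$ are linearly disjoint, this map is injective, so every entry vanishes: $\gamma_i=0$ for all $i$ and $\alpha_j=-\beta_{j-1}$ for all $j$.

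Substituting back gives $g_{i+1}-g_i=\beta_i-\beta_{i-1}$, i.e.\ $g_{i+1}-\beta_i=g_i-\beta_{i-1}$, so the element $h:=g_i-\beta_{i-1}$ does not depend on $i$. Finally, since $\beta_{i-1}\in R_{v_i}$, we get $h+R_{v_i}=g_i+R_{v_i}=f(v_i)$ for every $i$, which is the desired conclusion.

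I expect the only subtlety to be the index bookkeeping in the telescoping sum --- in particular keeping track that the contribution landing in $R_{v_j}$ is $\alpha_j+\beta_{j-1}$ (since $\beta_i$ belongs to $R_{v_{i+1}}$, not $R_{v_i}$) and that the empty-face term $R_\emptyset=0$ plays no role. Everything else is routine.
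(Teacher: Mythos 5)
Your proof is correct. It is the same basic strategy as the paper's: lift $f$ to a $0$-cochain of $\aug{R}_X$, translate $d_0f=0$ into the conditions $g_{i+1}-g_i\in R_{v_i}+R_{v_{i+1}}+R_{e_i}$, and use linear disjointness of the groups attached to the faces of the cycle exactly once. The difference is organizational: the paper proves by induction along the cycle a ``normal form'' $g_i-g_0=c_0+u_0+c_1+\dots+u_{i-1}+\tilde{c}_i$ and only then invokes linear disjointness on the relation coming from the closing edge, which forces the intermediate $c_j$'s and $u_j$'s to vanish; you instead sum all $\ell$ cocycle relations, let the left side telescope to $0$ around the cycle, regroup the right side into one element of each $R_{v_j}$ (namely $\alpha_j+\beta_{j-1}$) and each $R_{e_i}$ (namely $\gamma_i$), and apply injectivity of the summation map to this single relation. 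Your bookkeeping is more symmetric and shorter, and the residual freedom in the decomposition (you cannot conclude the individual $\alpha_j,\beta_j$ vanish, only $\alpha_j=-\beta_{j-1}$) is handled correctly by observing that $h:=g_i-\beta_{i-1}$ is independent of $i$ and that $\beta_{i-1}\in R_{v_i}$, which is precisely what is needed for $f(v_i)=h+R_{v_i}$. The sign ambiguity from the ordering $L$ is indeed immaterial since $\calG(e_i)$ is a subgroup, so the argument goes through as written.
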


\begin{proof}
	Let $v_0,\dots,v_{\ell-1}$ be the vertices
	of $X$ and let $e_0,\dots,e_{\ell-1}$ be the edges of $X$.
	We  choose the numbering such that $e_i=\{v_i,v_{(i+1)\bmod \ell}\}$
	for all $i$ and
	write $v_\ell=v_0$  for convenience.

	Let $f\in Z^0(X,R_X/\calG)$, and choose $g\in C^0(X,R)$
	projecting onto $f$, i.e., $f(v)=g(v)+R_v$ for all $v\in X(0)$.
	For every $i\in\{0,\dots,\ell-1\}$,
	write $g_i=g(v_i)$, and let $g'_i=g_i-g_0$.
	Unfolding the definitions,
	the assumption $d_0 f=0$ is equivalent to having
	$g'_{i+1}-g'_i=g_{i+1}-g_i\in R_{v_i}+R_{v_{i+1}}+R_{e_i}$ for all $i\in\{0,\dots,\ell-1\}$.

	We claim that there exist 
	$c_0\in R_{v_0}$, $c_1,\tilde{c}_1\in R_{v_1}$, \dots,
	$c_{\ell -2},\tilde{c}_{\ell -2}\in R_{v_{\ell -2}}$,
	$\tilde{c}_{\ell-1}\in R_{v_{\ell-1}}$
	and $u_i\in R_{e_i}$ ($i\in\{0,\dots,\ell-2\}$)
	such that $g'_i=c_0+u_0+c_1+u_1+\dots+c_{i-1}+u_{i-1}+\tilde{c}_i$
	for all $i\in\{1,\dots,\ell-1\}$.
	The proof is by induction on $i$. For the case $i=1$, note
	that $g'_1 = g_1-g_0\in R_{v_0}+R_{v_1}+R_{e_0}$, so we can choose
	$c_0\in R_{v_0}$, $\tilde{c}_1\in R_{v_1}$ and $u_0\in R_{e_0}$
	such that $g'_1=c_0+u_0+\tilde{c}_1$.
	Suppose now that $i\in\{1,\dots,\ell-2\}$
	and $c_0,\tilde{c}_1,c_1,\dots,\tilde{c}_{i-1},c_{i-1},\tilde{c}_i$
	and $u_0,\dots,u_{i-1}$
	were chosen so that 
	$g'_{j}=c_0+u_0+\dots+c_{j-1}+u_{j-1}+\tilde{c}_j$
	for all $j\in\{1,\dots,i\}$. 
	Then
	\[
	g'_{i+1}-c_0-u_0-\dots-c_{i-1}-u_{i-1}-\tilde{c}_i = g'_{i+1}-g'_i \in R_{v_i}+R_{v_{i+1}}+R_{e_i}.
	\]
	Thus, there are
	$s\in R_{v_i}$, $\tilde{c}_{i+1}\in R_{v_{i+1}}$
	and $u_i\in R_{e_i}$
	such that $g'_{i+1}-\sum_{j=0}^{i-1}(c_j+u_j)-\tilde{c}_i=s+\tilde{c}_{i+1}+u_i$,
	or rather
	$g'_{i+1}=\sum_{j=0}^{i-1}(c_j+u_j)+(\tilde{c}_i+s)+u_i+\tilde{c}_{i+1}$.
	Setting $c_i=\tilde{c}_i+s$ then proves our claim.

	Now, we have $c_0+u_0+\dots+c_{\ell-2}+u_{\ell-2}+\tilde{c}_{\ell-1}=g'_{\ell-1}=g_{\ell-1}-g_0\in R_{v_{\ell-1}}+R_{v_0}+R_{e_{\ell-1}}$. Since   $\{R_x\}_{x\in X(0)\cup X(1)}$
	are linearly disjoint, we must have $c_1=\dots=c_{\ell-2}=0$
	and $u_0=\dots=u_{\ell-2}=0$.
	This means that $g'_i=c_0+\tilde{c}_i$ for all $i\in\{1,\dots,\ell-1\}$.
	Define $h = g_0+c_0$. Then for all $i\in \{1,\dots,\ell-1\}$,
	we have $g_i = g'_i +g_0=h+\tilde{c}_i\in h+R_{v_i}$,
	and $g_0 = h - c_0\in h+R_{v_0}$.
	This means that $f( v_i )=h+R_{v_i}$
	for all $i$, so we proved the existence of $h$.
\end{proof}

We are now ready to prove the following key lemma.

\begin{lem}\label{LM:key-lemma}
	Under  the assumptions of 
	Theorem~\ref{TH:cbe-for-quotient-sheaves},
	put $\calF=R_X/\calG$ 
	and let $f\in C^0(X,\calF)$
	and $\beta\in [0,1]$.
	If $\|d_0 f\|_w\leq \beta$, then 
	\[\dist_w(f,B^0(X,\calF)) <
	\frac{2}{3}+
	\frac{1}{3}[\beta+t+s+\lambda+2\max\{|\lambda|,|\mu|\}].
	\]
	If, instead, we use the  assumptions of 
	Theorem~\ref{TH:cbe-for-partite-quotient-sheaves}, then
	\[
	\dist_w(f,B^0(X,\calF)) <
	\frac{2r+2}{3r+2}+\frac{r}{3r+2}[\beta+t+s+\lambda+2r \max\{|\lambda|,|\mu|\}] .
	\]
\end{lem}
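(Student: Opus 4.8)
The plan is to localise $f$ on short cycles --- where Lemma~\ref{LM:cycle-graph-cycles} forces it to look like a $0$-coboundary --- then to cluster these cycles and use the mixing lemmas to find one cluster of weight $>\tfrac13-O(\cdots)$.

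\medskip\noindent\emph{Setting up.} Write $n=|X(0)|$, put $S=\supp d_0f$ (so $w(S)\le\beta$), and let $X'$ be the subgraph of $X$ with the same vertices and edge set $X(1)\setminus S$. Then $w(X'(1))\ge 1-\beta$, and since $d_0f$ vanishes on every edge of $X'$, the restriction of $f$ to any subgraph of $X'$ is a cocycle. Since $\dist_w(f,B^0(X,\calF))<1$ always (the coboundary $v\mapsto h+R_v$ for a lift $h$ of $f(v_0)$ agrees with $f$ at $v_0$), we may assume $\beta+t+s+\lambda+2\max\{|\lambda|,|\mu|\}<1$. Let $\Gamma$ be the subgraph of $X'$ spanned by the edges lying on some cycle of $X'$ of length $\le\ceil{\tfrac23 n}$ (a \emph{short} cycle). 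Then $X'\setminus\Gamma$ contains no short cycle, so Lemma~\ref{LM:weight-of-subforest}(ii) applied to the subgraph $X'\setminus\Gamma$ of $X$ gives $w((X'\setminus\Gamma)(1))<t+s$, whence $w(\Gamma(1))>1-\beta-t-s$; by construction $\Gamma$ is a union of short cycles of $X'$.

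\medskip\noindent\emph{Clustering the short cycles.} For every short cycle $C$ of $X'$, Lemma~\ref{LM:cycle-graph-cycles} applies (hypothesis (1) makes $\{R_x\}_{x\in C}$ linearly disjoint, and $f|_C$ is a cocycle), yielding $h_C\in R$ with $f(v)=h_C+R_v$ for all $v\in C(0)$; hypothesis (2) makes $h_C$ unique, and if two short cycles $C,C'$ share two vertices $u\ne v$ then $h_C-h_{C'}\in R_u\cap R_v=0$, so $h_C=h_{C'}$. Grouping the short cycles of $X'$ by their common value --- and using Lemma~\ref{LM:cycle-minus-subgraph} together with the diameter bound Lemma~\ref{LM:diameter-bound} to control how short cycles inside one connected component of $\Gamma$ overlap --- one produces pairwise edge-disjoint subgraphs $\Gamma^{(1)},\dots,\Gamma^{(k)}$ with $\bigcup_i\Gamma^{(i)}=\Gamma$ and values $h_1,\dots,h_k$, such that $f$ agrees on $\bar D_i:=\Gamma^{(i)}(0)$ with the $0$-coboundary $g_i\in B^0(X,\calF)$, $g_i(v)=h_i+R_v$, and such that the sets $\bar D_i$ pairwise meet in at most one vertex (again by (2)).

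\medskip\noindent\emph{Extracting a large cluster and concluding.} For each $i$ we have $\Gamma^{(i)}(1)\subseteq E(\bar D_i)$, so Theorem~\ref{TH:EML}(ii) gives $w(\Gamma^{(i)}(1))\le w(\bar D_i)^2+\lambda\,w(\bar D_i)(1-w(\bar D_i))$. Summing over $i$, inserting $\sum_i w(\Gamma^{(i)}(1))=w(\Gamma(1))>1-\beta-t-s$, and using the near-disjointness of the $\bar D_i$ together with Theorem~\ref{TH:EML}(i) to absorb the cross-edges $E(\bar D_i,\bar D_j)$ and the one-vertex overlaps, one deduces
\[
\max_i w(\bar D_i)\;>\;\tfrac13-\tfrac13(\beta+t+s+\lambda)-\tfrac23\max\{|\lambda|,|\mu|\}.
\]
Choosing $i_0$ attaining the maximum, $g_{i_0}\in B^0(X,\calF)$ and $f$ agrees with $g_{i_0}$ on $\bar D_{i_0}$, so $\dist_w(f,B^0(X,\calF))\le 1-w(\bar D_{i_0})<\tfrac23+\tfrac13[\beta+t+s+\lambda+2\max\{|\lambda|,|\mu|\}]$, which is the first claim. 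For the partite statement one runs exactly the same argument with Theorem~\ref{TH:EML-bipartite} in place of Theorem~\ref{TH:EML}; the eigenvalue $-\tfrac1r$ produces the extra factors of $r$, giving $\dist_w(f,B^0(X,\calF))<\tfrac{2r+2}{3r+2}+\tfrac{r}{3r+2}[\beta+t+s+\lambda+2r\max\{|\lambda|,|\mu|\}]$.

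\medskip\noindent\emph{The main obstacle.} The hard part is the quantitative step in the last paragraph: turning ``$\Gamma$ is heavy and is covered by near-disjoint clusters, each of which spans comparatively few internal edges by the Expander Mixing Lemma'' into the lower bound $\tfrac13-O(\cdots)$ on the largest cluster. One must simultaneously handle possibly many small clusters, the single-vertex overlaps, and the cross-edge terms $w(E(\bar D_i,\bar D_j))$; it is precisely here that the combinatorial threshold $\ceil{\tfrac23 n}$ (via Lemma~\ref{LM:weight-of-subforest} and Lemma~\ref{LM:diameter-bound}) interacts with the mixing lemma to force the constant $\tfrac13$, and collapsing the various error terms to exactly $\lambda+2\max\{|\lambda|,|\mu|\}$ (resp.\ $\lambda+2r\max\{|\lambda|,|\mu|\}$) is delicate. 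A secondary obstacle is verifying the cluster bookkeeping in the clustering step --- in particular that distinct clusters meet in at most one vertex and together cover all of $\Gamma$ --- which is where Lemma~\ref{LM:cycle-minus-subgraph} and Lemma~\ref{LM:diameter-bound} are needed.
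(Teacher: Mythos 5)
Your route is the paper's own: localize $f$ on short cycles via Lemma~\ref{LM:cycle-graph-cycles}, observe (using condition (2)) that the resulting values are unique and that cycles sharing two vertices carry the same value, cover the heavy part $\Gamma$ of the graph (weight $>1-\beta-t-s$ by Lemma~\ref{LM:weight-of-subforest}(ii)) by clusters on which $f$ agrees with a genuine $0$-coboundary and which pairwise meet in at most one vertex, and finally use the mixing lemmas to produce one cluster of weight $>\frac13-O(\cdots)$, giving $\dist_w(f,B^0)\le 1-\max_i w(\bar D_i)$. Up to and including the clustering step this matches the paper (your grouping-by-value clusters play the role of its maximal ``blobs''), and the setup inequality $w(\Gamma(1))>1-\beta-t-s$ is exactly its Step~3.

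The gap is the quantitative step you yourself flag as the ``main obstacle'': it is asserted, and the sketch you give of it does not work as described. In your decomposition every edge of $\Gamma$ lies inside some $\bar D_i$, so there are no cross-edge terms $E(\bar D_i,\bar D_j)$ to ``absorb''; the genuine difficulty is that $\sum_i w(\bar D_i)$ need not be at most $1$, because one vertex can lie in arbitrarily many clusters (many short cycles pairwise meeting only at a common vertex may carry pairwise distinct values --- condition (2) does not forbid this), so the chain $1-\beta-t-s<\sum_i w(\Gamma^{(i)}(1))\le(\max_i w(\bar D_i)+\lambda)\sum_i w(\bar D_i)$ yields nothing. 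The paper's proof is devoted precisely to this point: it shows that the incidence graph between maximal clusters and multiply-covered vertices is a \emph{forest} --- this is where Lemma~\ref{LM:diameter-bound} and the threshold $\ceil{\frac23 n}$ actually enter, since a cycle of clusters would concatenate into a single cycle of length $\le\frac23 n$ meeting each constituent in two vertices, a contradiction --- then roots the forest, deletes from each non-root cluster its parent vertex to obtain \emph{disjoint} sets $Y^*$ with $\sum_Y w(Y^*)\le 1$, and charges the edges at deleted vertices to cross terms $E(Y^*,g(Y)^*)$ between a cluster and its unique grandparent. Only after this bookkeeping do Theorem~\ref{TH:EML} (resp.\ Theorem~\ref{TH:EML-bipartite}(ii)) give $1-\beta-t-s<3\alpha+\lambda+2\max\{|\lambda|,|\mu|\}$ (resp.\ the $r$-dependent analogue), which is the sole source of the constants $\frac13$, $\frac{r}{3r+2}$ and of the terms $2\max\{|\lambda|,|\mu|\}$, $2r\max\{|\lambda|,|\mu|\}$ in the statement. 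You cite the right lemmas but never extract or exploit this forest/partition structure, so the claimed bound $\max_i w(\bar D_i)>\frac13-\frac13(\beta+t+s+\lambda)-\frac23\max\{|\lambda|,|\mu|\}$ is unsupported and the proposal, as it stands, does not prove the lemma.
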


\begin{proof}
	\Step{1} 
	Let $n=|X(0)|$.	
	We call a subgraph $Y$ of $X$ an \emph{$f$-blob}, or just a blob for short,
	if:
	\begin{enumerate}[label=(b\arabic*)]
		\item $Y$ is connected and equals to the union of its cycle subgraphs, and
		\item there exists $g\in R$   
		such that $f(v)=g +R_v$ for all $v\in Y(0)$.
	\end{enumerate} 
	Note that   condition (b1)
	implies that   $|Y(0)|>2$, because a cycle
	has at least $3$ vertices.
	We denote an element $g\in G$ as in (b2)
	by $g_Y$; we will  see below that $g_Y$ is uniquely determined by $Y$.
	
	Denote the set of $f$-blobs by $\calB$.	By Lemma~\ref{LM:cycle-graph-cycles}
	and assumption (1) of Theorem~\ref{TH:cbe-for-quotient-sheaves},
	every cycle $Y\subseteq X-\supp(d_0f)$ of length $ \ceil{\frac{2}{3}n}$ or less
	is a blob, because the restriction of $f$ to $Y$ is in $Z^0(Y,\calF|_Y)$.

	Note that if there exists  a blob $Y$ with $w(Y(0))\geq \alpha$,
	then $\dist_w(f,B^0(X,\calF))\leq 1-\alpha$.
	Indeed, writing $g=g_Y$ and $f'=f-d_{-1}g$,
	we have $\dist_w(f,B^0(X,\calF)) \leq \|f'\|_w\leq 1-\alpha$,
	because $f'$ vanishes on $Y(0)$.
	We will prove the lemma by showing that there exists a sufficiently large blob.

\medskip	
	
	\Step{2}
	Observe   that if $Y$ and $Z$ are blobs such that $|Y(0)\cap Z(0)|>1$,
	then $Y\cup Z$ is also a blob.
	Indeed, (b1) holds for $Y\cup Z$ because it holds for $Y$
	and $Z$, and  $Y\cap Z\neq\emptyset$.
	To see that (b2) holds, fix  a choice of $g_Y$ and $g_Z$.
	Then
	for every $v\in Y(0)\cap Z(0)$, we have $g_Y+R_v=f(v)=g_{Z}+R_v$,
	so $g_Y-g_Z\in R_v$. Choosing distinct $u,v\in Y(0)\cap Z(0)$,
	we get $g_Y-g_Z\in R_u\cap R_v=0$, because $R_u$ and $R_v$
	are linearly disjoint (assumption (2) of Theorem~\ref{TH:cbe-for-quotient-sheaves}). 
	This means that $g_Y=g_Z$ and (b2)
	holds for $Y\cup Z$ by taking $g:=g_Y=g_Z$.
	
	Applying the previous paragraph with $Y=Z$ shows that 
	$g_Y$ is uniquely determined by $Y$.

\medskip
	
	\Step{3}
	Write $M= 	\bigcup_{Y\in \calB} Y$. Then $M$ is a subgraph of $X$.
	
	We claim that $M(1)\subseteq X(1)-\supp (d_0 f)$. 
	To show this, it is enough to prove that $Y(1)\subseteq X(1)-\supp (d_0 f)$
	for any blob $Y$. Let $e$ be an edge in $Y$
	and let $u$ and $v$ be its vertices. Then $f(u)=g_Y+R_u$
	and $f(v)=g_Y+R_v$. As a result, $(d_0 f)(e) = g_Y-g_Y+(R_u+R_v+R_e)=0$
	in $\calF(e)$, meaning that $e\notin \supp(d_0 f)$,
	hence our claim.
	
	We observed in Step~1 that every
	cycle of length $\leq \ceil{\frac{2}{3} n}$ in $X-\supp(d_0 f)$
	is a blob, and thus contained in $M$.
	It follows that the graph underlying $X(1)-\supp(d_0 f)-M(1)$ contains no
	cycles of length $\leq \ceil{\frac{2}{3} n}$.
	Thus, by Lemma~\ref{LM:weight-of-subforest}, 
	$w( X(1)-\supp(d_0f)-M(1))< t+s$.
	Since $\|d_0 f\|_w\leq \beta$,
	it follows that
	\[
	w(M(1))> 1-\beta-t-s.
	\]

\medskip

	\Step{4} 
	A blob is called maximal if it is not properly contained in any
	other blob. Write $\calM$ for the set of maximal blobs.
	Since every blob is contained in a maximal blob,
	$M=\bigcup_{Y\in \calM} Y$.   Step~2 tells us that for every   $Y \in\calM$ 
	and $Z\in\calB$,
	either $Z\subseteq Y$, or  $|Y(0)\cap Z(0)|\leq 1$.

	Let $N$ denote the set of $0$-faces of $X$
	belonging to more than one blob in $\calM$.
	We define a graph $\Gamma$ as follows:
	The vertices of $\Gamma$ are $\calM\cup N$ and
	the edges of $\Gamma$ are pairs $\{x,Y\}$
	such that $x\in N$, $Y\in\calM$ and $x\in Y(0)$.
	See Figure~\ref{FG:blobs} for an illustration.

\begin{figure}[htb]
\begin{center}
\includegraphics[width=\textwidth]{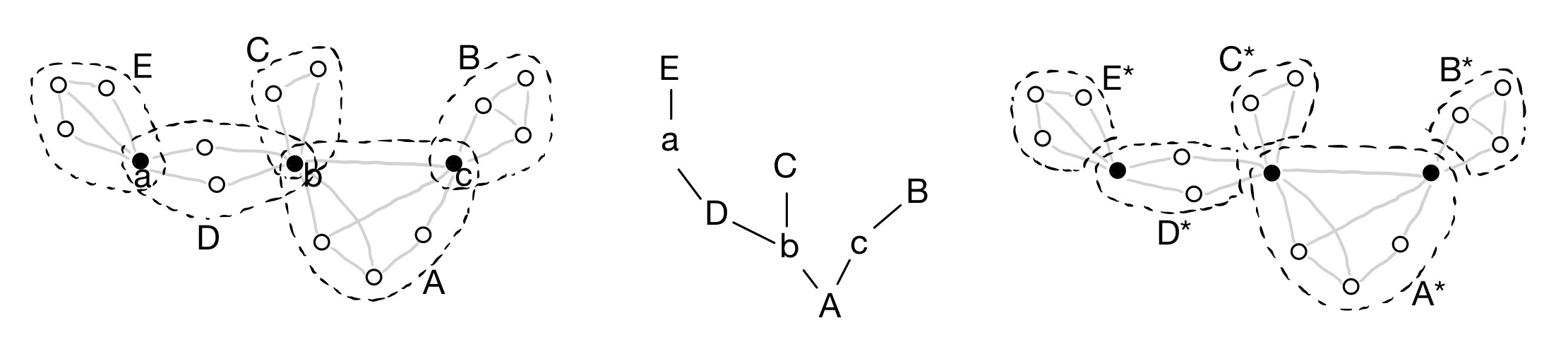}
\end{center}
\captionsetup{margin=1cm}
\caption{An illustration of the collection of blobs $\calM$ (left), the associated
graph $\Gamma$ (middle), and the partition $\{Y^*\where Y\in\calM\}$ (right).
The blobs are labelled $A$--$E$. The black vertices are
those in living in $N$.
The  set of roots taken on the right is $\calR=\{A\}$.}
\label{FG:blobs}
\end{figure}
		
	We claim that the graph $\Gamma$ has no cycles.
	For the sake of contradiction, 
	suppose otherwise. Then there exist $\ell\geq 2$ 
	and distinct
	$x_0,\dots,x_{\ell-1}\in N$, $Y_0,\dots,Y_{\ell-1}\in\calM$
	such that $\{x_i\}= Y_i\cap Y_{i+1}$ for all $i\in\{0,\dots,\ell-1\}$ (with the convention
	$Y_\ell =Y_0$) and $Y_i\cap Y_j=\emptyset$ whenever $|i-j|>1$. 
	By applying Lemma~\ref{LM:diameter-bound} to $Y_i$,
	we see that there is a closed path $P_i\subseteq Y_i$ 
	of length $\leq \frac{2}{3}(|Y_i(0)|-1)$ from $x_{i-1}$ to $x_i$.
	Since the sets $Y_0(0)-\{x_0\},\dots,Y_{\ell-1}(0)-\{x_{\ell-1}\}$
	are pairwise disjoint,
	the union $P =\bigcup_{i=0}^{\ell-1}Y_i$ is   a cycle
	of length
	at most $ \frac{2}{3}\sum_{i=0}^{\ell-1}(|Y_i(0)|-1)\leq \frac{2}{3} n$
	that is contained in $M\subseteq X-\supp(d_0f)$.
	This means that $P$ is a blob --- see Step~1.
	By construction, $P$ and each of the $Y_i$ share  at least two $0$-faces, namely, $x_{i}$
	and $x_{i+1}$, so we must have $P\subseteq Y_i$.
	But then $|Y_0(0)\cap Y_1(0)|\geq |P(0)|\geq 2$,
	which contradicts our assumption $Y_0\cap Y_1=\{x_0\}$.

\medskip	
	
	\Step{5}
	By the previous step, $\Gamma$ is a forest. 
	Let $\calR$ be a set of roots for the trees in $\Gamma$.
	By the definition of $N$, every connected component of $\Gamma$ contains a vertex in  $\calM$,
	so we may choose $\calR$ to be contained in $\calM$.
	We denote the parent of every every $Y\in \calM-\calR$ by $x(Y)$.
	Since $x(Y)$ is not a root, it has a parent, which we denote by $g(Y)$
	(the grandparent of $Y$).
	For every  blob $Y\in \calM $, define:
	\[
	Y^* = \left\{
	\begin{array}{ll}
	Y(0) & Y\in \calR \\
	Y(0) - \{x(Y)\} & Y\in\calM-\calR.
	\end{array}
	\right.
	\]

	We claim that $\{Y^*\where Y\in \calM\}$ is a partition of $M(0)$ (see Figure~\ref{FG:blobs}
	for an illustration).
	To that end, let us first show that	
	$M(0)\subseteq  \bigcup_{Y\in\calM} Y^*$. Observe that
	every   $x\in M(0)$ belongs to $Y(0)$ for some $Y\in \calM$.
	If $Y\in \calR$, then $Y^*=Y(0)$ and $x\in Y^*$.
	Otherwise, $Y^*=Y(0)-\{x(Y)\}$, so 
	$x\in Y^*$ provided $x\neq x(Y)$. If
	$x=x(Y)$, then $x$ is a vertex of $g(Y)$ by the definition of $\Gamma$. 
	Since $g(Y)$ is the parent of $x$ (in $\Gamma$),
	$x$ is not the parent of $g(Y)$, which means
	that $x\in g(Y)^*$.
	Thus, in any case,  $x\in  \bigcup_{Y\in\calM} Y^*$.
	Next, we need to show that   $Y^*\cap Z^*= \emptyset$ for any distinct
	$Y,Z\in \calM$. For the sake of contradiction, suppose there is $x\in Y^*\cap Z^*$. Then
	$x\in N$ and $x$ is adjacent to $Y$ and $Z$ in $\Gamma$.
	This means that at least one of $Y$, $Z$ is not a parent of $x$; say it is $Y$.
	Then $x$ must be the parent of $Y$, which means that $x\notin Y-\{x(Y)\}=Y^*$, a contradiction.

	Now
	let $e\in M(1)$. We claim that  one of the following holds:
	\begin{enumerate}[label=(\roman*)]
		\item $e\in E(Y^*)$ for some $Y\in \calM$,
		\item $e\in E(Y^*,g(Y)^*)$ for some $Y \in \calM-\calR$.
	\end{enumerate}	 
	Indeed, let $y,z$ denote the vertices of $e$.
	Since $M=\bigcup_{Y\in\calM} Y$, there is $Y\in \calM$
	such that $e\in Y(1)$, and thus $y,z\in Y(0)$.
	If both $y$ and $z$ are different from $x(Y)$,
	then $y,z\in Y^*$ and (i) holds.
	Otherwise, exactly one of $y,z$ equals $x(Y)$, say $y=x(Y)$ and
	$z\in Y^*$.
	Then $y$ is the parent of $Y$ in $\Gamma$.
	As explained in the previous paragraph, $y\in g(Y)$
	and $y$ is not a parent of $g(Y)$, so $y\in  g(Y)^*$ and (ii) holds.

\medskip

\Step{6}
	At this point, we assume that we are in the setting of Theorem~\ref{TH:cbe-for-quotient-sheaves}.
	For every $Y\in \calM$, put $\alpha_Y=w(Y^*)$
	and set 
	\[\alpha = \max\{\alpha_Y\where Y\in \calM\} .\]
	Since $\{Y^*\where Y\in \calM\}$ is a partition of $M(0)$
	(Step~5),
	we have
	$
	\sum_{Y\in\calM} \alpha_Y = w(M(0)) \leq 1$.
	
	Let $g^{-1}(Y)$ denote the set of blobs $Z\in\calM$ with $g(Z)=Y$
	(i.e.\ the grandchildren of $Y$).
	By Step 3 and the last paragraph of Step 5, we  have 
	\[
	1-\beta-t-s < w(M(1))\leq \sum_{Y\in\calM} w(E(Y^*))+\sum_{Y\in \calM} w(E(Y^*,
	{\textstyle\bigcup_{Z\in g^{-1}(Y)}} Z^*)).
	\]
	Put $\theta = \max\{|\mu|,|\lambda|\}$.
	By Theorem~\ref{TH:EML}, the right hand side is at most
	\begin{align*}
	&\sum_{Y\in\calM} (\alpha_Y^2+\lambda \alpha_Y )
	+
	\sum_{Y\in\calM } (2\alpha_Y{\textstyle\sum_{Z\in g^{-1}(Y)}\alpha_{Z}}+
	2\theta\sqrt{\alpha_Y {\textstyle\sum_{Z\in g^{-1}(Y)} \alpha_{Z}} })
	\\
	\leq &\sum_{Y\in\calM}  \alpha_Y(\alpha+\lambda)+
	\sum_{Y\in\calM } (2\alpha_Y{\textstyle\sum_{Z\in g^{-1}(Y)}\alpha_{Z}} +
	\theta (\alpha_Y+ {\textstyle\sum_{Z\in g^{-1}(Y)} \alpha_{Z}}) )\\
	=&\sum_{Y\in\calM }\alpha_Y(\alpha+\lambda)
	+\theta\sum_{Y\in \calM}\alpha_Y+
	\sum_{Y\in \calM}\sum_{Z\in g^{-1}(Y)}(2\alpha_Y+\theta)\alpha_Z\\
	\leq&
	\alpha+\lambda+\theta+\sum_{Z\in\calM-\calR}\alpha_Z(2\alpha_{g(Z)}+\theta)\\
	\leq&
	\alpha+\lambda+\theta+(2\alpha+\theta)=3\alpha+2\theta+\lambda.
	\end{align*}
	Thus,
	$
	1-\beta-t-s < 3\alpha+2\theta+\lambda
	$,
	and by rearranging, we get
	\[
	\alpha > \frac{1}{3}-\frac{\beta}{3}-\frac{t}{3}-\frac{s}{3}
	-\frac{\lambda+2\max\{|\lambda|,|\mu|\}}{3} .
	\]
	By the definition of $\alpha$,  there exists
	a blob $Y$ with $w(Y(0))\geq w(Y^*)=\alpha$.
	As explained in Step~1, it follows that
	$\dist_w(f,B^0(X,\calF))\leq 1-\alpha$, so this completes
	the proof  of the lemma under the assumptions
	of Theorem~\ref{TH:cbe-for-quotient-sheaves}.
	
\medskip

	\Step{7}
	Finally,
	suppose   that we are in the setting of Theorem~\ref{TH:cbe-for-partite-quotient-sheaves}.
	As in Step~6, we find that
	\[
	1-\beta-t-s <   \sum_{Y\in\calM} w(E(Y^*))+\sum_{Y\in \calM} w(E(Y^*,
	{\textstyle\bigcup_{Z\in g^{-1}(Y)}} Z^*)).
	\]
	Using Theorem~\ref{TH:EML-bipartite}(ii)
	and Theorem~\ref{TH:EML}(ii) (the assumption $\lambda\geq -\frac{1}{r}$ guarantees
	that $(X,w)$ is a $[-1,\lambda]$-expander  if we forget the $(r+1)$-partite structure of $X$), 
	we see that the right hand side is at most
	\begin{align*}
	&\sum_{Y\in\calM} (\alpha_Y^2+\lambda \alpha_Y )
	+
	\sum_{Y\in\calM } (\textstyle{\frac{2r+2}{r}}\alpha_Y{\textstyle\sum_{Z\in g^{-1}(Y)}\alpha_{Z}}+
	2r \theta \sqrt{\alpha_Y {\textstyle\sum_{Z\in g^{-1}(Y)} \alpha_{Z}} })
	\end{align*}
	and a computation similar to the one in Step~6 shows that
	this expression is bounded by 
	\[
	\alpha+\lambda+r\theta+(\textstyle{\frac{2r+2}{r}}\alpha+r\theta)=
	\textstyle{\frac{3r+2}{r}}\alpha+2r\theta+\lambda
	.\]
	As a result,
	\[
	\alpha > \textstyle{\frac{r }{3r+2}}[1-\beta -t-s-\lambda-2r\max\{|\lambda|,|\mu|\}]
	\]
	and we conclude the proof as in Step~6.
\end{proof}

We are now ready to prove Theorems~\ref{TH:cbe-for-quotient-sheaves}
and~\ref{TH:cbe-for-partite-quotient-sheaves}.

\begin{proof}[Proof of Theorem~\ref{TH:cbe-for-quotient-sheaves}]
	Let $f\in C^0(X,R_X/\calG)$.
	We need to show that $\|df\|_w\geq \veps\dist_w(f,B^0(X,R_X/\calG)$
	for $\veps:=\frac{2-4\lambda-4\max\{|\lambda|,|\mu|\}-5t-2s}{5-2\lambda}$.
	By replacing $f$ with a member of $f+B^0$ minimizing $\|\cdot\|_w$,
	we may assume that $\|f\|_w=\dist(f,B^0)$.
	
	Fix some $\beta\in [0,1]$, to be chosen later.
	If $\|d_0 f\|_w\geq \beta$, then we have
	$\|d_0 f\|_w\geq \beta\|f\|_w $.
	On the other hand, if $\|d_0 f\|<\beta$, then by Lemma~\ref{LM:key-lemma}, we have
	$\|f\|_w= \dist_w(f,B^0(X,\aug{R}/\calG))\leq 
	\frac{2}{3}+
	\frac{1}{3}[\beta+t+s+\lambda+2\max\{|\lambda|,|\mu|\}]$. By Lemmas~\ref{LM:small-set-expansion}
	and~\ref{LM:expansion-at-links} this means that
	\begin{align*}
	\|d_0f\|_w& \geq \left[2-t-2\lambda-(2-2\lambda)[\textstyle{\frac{2}{3}+\frac{\beta}{3}+\frac{t}{3}+\frac{s}{3}
	+\frac{\lambda+2\max\{|\lambda|,|\mu|\}}{3}}]\right]\|f\|_w\\
	& \geq  
	\left[\textstyle{[
	\frac{2}{3}
	-\frac{4}{3}\lambda -\frac{4}{3}\max\{|\lambda|,|\mu|\} -\frac{5}{3}t-\frac{2}{3}s] 
	-
	(\frac{2-2\lambda}{3})
	 \beta }\right]\|f\|_w
	\end{align*}
	As a result, $(X,w,\aug{R}/\calG)$ is an $\veps$-coboundary expander in dimension $0$ for
	\[
	\veps=\min\{\beta, \textstyle{[
	\frac{2}{3}
	-\frac{4}{3}\lambda -\frac{4}{3}\max\{|\lambda|,|\mu|\} -\frac{5}{3}t-\frac{2}{3}s] 
	-
	(\frac{2-2\lambda}{3})
	 \beta}\}
	\]
	The right hand side is maximized when
	$\veps=\beta=\frac{2-4\lambda-4\max\{|\lambda|,|\mu|\}-5t-2s}{5-2\lambda}$, and theorem follows.
\end{proof}

\begin{proof}[Proof of Theorem~\ref{TH:cbe-for-partite-quotient-sheaves}]
	Similarly to the proof of Theorem~\ref{TH:cbe-for-quotient-sheaves},
	we find that for every $\beta\in [0,1]$, 
	\[
	\cb_0(X,w,R_X/\calG)\geq \min\{\beta, \textstyle{[
	\frac{2r }{3r+2}
	-\frac{4r }{3r+2}\lambda -\frac{4r^2}{3r+2}\theta -\frac{5r+2}{3r+2}t-\frac{2r}{3r+2}s] 
	-
	 \frac{ (2-2\lambda)r}{3r+2} 
	 \beta}\},
	\] 
	where $\theta=\max\{|\lambda|,|\mu|\}$. The maximum of the right hand
	size  is attained 
	for $\beta=\frac{2r-4r\lambda-4r^2\theta-(5r+2)t-2r s}{ 5r+2 -2r\lambda}$,
	hence the theorem.
\end{proof}

\begin{remark}\label{RM:eliminating-s}
	In Theorems~\ref{TH:cbe-for-quotient-sheaves}
	and~\ref{TH:cbe-for-partite-quotient-sheaves},
	if all edges in $X$ have the same weight, or if \emph{all} the subgroups
	$\{R_x\}_{x\in X}$ are linearly disjoint, then we can eliminate
	the terms $-2s$, resp.\ $-2rs$, in the lower bound for $\cb_0(X,w,R_X/\calG)$,
	or equivalently take $s=0$.
	
	Indeed, if all edges in $X$ have the same weight,
	then the assertion $w(M(1))>1-\beta-t-s$ in Step~3 of the proof of Lemma~\ref{LM:key-lemma}
	implies that $w(M(1))\geq 1-\beta-t$.
	Likewise, if all the subgroups
	$\{R_x\}_{x\in X}$ of $R$ are linearly disjoint, then
	in the same place in the proof,	
	we can apply Lemma~\ref{LM:weight-of-subforest}(i) instead of Lemma~\ref{LM:weight-of-subforest}(ii)
	and get that $w(M(1))> 1-\beta-t$.
	Carrying the entire proof of
	Lemma~\ref{LM:key-lemma} using the inequality
	$w(M(1))\geq 1-\beta-t$ in place of $w(M(1))>1-\beta-t-s$
	allows
	us to eliminate $s$ at the cost of replacing the strict inequalities in the proof with non-strict inequalities.
	This, in turn, eliminates $s$ from Theorems~\ref{TH:cbe-for-quotient-sheaves}
	and~\ref{TH:cbe-for-partite-quotient-sheaves}.
\end{remark}

\begin{cor}\label{CR:cse-quotient-sheaves}
	With notation and assumptions
	as in Theorem~\ref{TH:cbe-for-quotient-sheaves}
	(resp.\ Theorem~\ref{TH:cbe-for-partite-quotient-sheaves}),
	let $\calF$ be the subsheaf of $R_X/\calG$ determined
	by $\calF(\emptyset)=0$ and $\calF(x)=(R_X/\calG)(x)$ for all nonempty $x\in X$,
	and put $\eta = \max\{w(v)\where v\in X(0)\}$ (we always have $\eta\leq \frac{s}{t}$).
	Then 
	$\cse_0(X,w,\calF)\geq \frac{2-4\lambda-4\max\{|\lambda|,|\mu|\}-5t-2s}{5-2\lambda}$
	(resp.\
	$\cse_0(X,w,\calF)\geq \frac{2r-4r\lambda-4r^2\max\{|\lambda|,|\mu|\}-(5r+2)t-2r s}{ 5r+2 -2r\lambda}$)
	and 
	$\ccd_0(X,w,\calF)\geq 1-\eta$.	
\end{cor}

\begin{proof}
	The statement about $\cse_0(X,w,\calF)$
	is a consequence of
	Theorem~\ref{TH:cbe-for-quotient-sheaves}
	(resp.~\ref{TH:cbe-for-partite-quotient-sheaves}) and
	Remark~\ref{RM:coboundary-to-cosystolic}.
	The latter also tells us that
	in order to prove the lower bound on $\ccd_0(X,w,\calF)$,
	it is enough to show that for all $f\in R=C_{-1}(X,R_X/\calG)$,
	we have $\|d_{-1} f\|\geq 1-\eta$.
	Observe that $(d_{-1} f)(v)=f+R_v\in R/R_v$ for all $v\in X(0)$.
	Thus, $\|d_{-1} f\|=1-w(A)$,
	where $A$ is the set of $0$-faces $v$ such that $f\in R_v$.
	Since every two of the groups $\{R_v\}_{v\in X(0)}$
	are linearly disjoint, $A$ is either empty or a singleton,
	so $w(A)\leq \eta$ and the corollary follows.
	(It is worth noting that if $R_v\neq 0$ for all $v\in X(0)$, then
	by choosing $f\in R_v$ with $w(v)$ maximal, we  
	get $\|d_{-1}f\|=1-\eta$, so the lower bound $1-\eta$ cannot be improved in general.)
\end{proof}

\section{The Case of Finite Buildings}
\label{sec:buildings}

We now apply Corollary~\ref{CR:expanders-are-cb-exps}
and Theorem~\ref{TH:cbe-for-partite-quotient-sheaves} 
to the $0$-skeleton of finite buildings admitting a strongly transitive group action, giving
upper bounds on the coboundary expansion in terms of the \emph{thickness} and the \emph{type} of the building. 
This is the main reason why we have taken care
to treat the case of $(r+1)$-partite weighted simplicial complexes.
Applications of the results of this section to locally testable codes
appear in \cite[\S9]{First_2023_sheaves_on_complexes_preprint}.

\medskip

We refer the reader to \cite{Abramenko_2008_Buildings}  for 
an extensive discussion of buildings.
Here we satisfy with saying that a building is a  possibly-infinite  simplicial complex
$X$ equipped with a collection $\calE$ of subcomplexes called \emph{apartments}
satisfying certain axioms.
All the apartments of $X$
are isomorphic to each other and to a \emph{Coxeter complex} $\Sigma$.
The data of $\Sigma$ (up to isomorphism) is encoded by a \emph{Coxeter diagram}
$T$, which is a finite graph with edges labelled by elements from the set
$\{3,4,5,\dots\}\cup\{\infty\}$ (unlabelled edges are given the label $3$ by default).
We write $T=T(X)$ and say that $T$ is the \emph{type} of $X$.
There is a labelling of the vertices
of $X$ by the vertices of $T$ making $X$ into a pure $(r+1)$-partite simplicial complex,
where $r=\dim X= |V(T)|-1$. The type of a face $z\in X$  is the set $t(z)\subseteq V(T)$ consisting
of the types of the vertices of $z$.
If $\dim z\leq \dim X-1$, then the link $X_z$ is also a building and its type $T(X_z)$ is the Coxeter diagram
obtained  from $T=T(X)$ by removing the vertices in $t(z)$.

Let $B$ denote the $V(T)\times V(T)$ real matrix determined by
\[
B_{u,v}=\left\{\begin{array}{ll}
1 & u=v\\
-\cos\frac{\pi}{m} & \text{$\{u,v\}\in T(1)$ and $m$ is the label of $\{u,v\}$}\\
0 & \{u,v\}\notin T(1).
\end{array}\right.
\]
The building $X$ is called \emph{spherical} if $B$ is positive
definite, or
equivalently, if its apartments are finite. 
Following \cite[Chapter~10]{Abramenko_2008_Buildings},
we call  $X$ \emph{affine} if $B$ is positive semidefinite
and $\rank B=|V(T)|-1$ (consult \cite[Proposition~10.44]{Abramenko_2008_Buildings}).
See \cite[p.~50, Remark~10.33(b)]{Abramenko_2008_Buildings} for a complete list of the possible
Coxeter diagrams of spherical and affine buildings.
If $X$ is an \emph{affine or spherical} building and $z$ is a face with $0\leq \dim z<\dim X$, then the link $X_z$
is a \emph{spherical} building.

A building $X$ of dimension $r$ is called $q$-\emph{thick} if every $(r-1)$-face of $X$ is contained in at least $q$ $r$-faces. We say that $X$ is \emph{thick} if it
is $3$-thick.  

Recall from \cite[\S6.1.1]{Abramenko_2008_Buildings}
that an $r$-dimensional building $X$ is said to posses a \emph{strongly transitive action}
if there is a group $G$ acting on $X$ via type-preserving simplicial automorphisms
such that $G$  takes apartments to apartments 
and acts transitively on the set of pairs $(A,x)$ consisting of an apartment
$A\in \calE$ and an $r$-face $x$ in $A$. Since $G$ is type preserving,
this means that $G$ acts transitively on the set of faces of a fixed type
$t\subseteq T(X)(0)$.
Moreover, for every $z\in X$ of dimension $ \dim X-2$ or less, the building $X_z$ 
also possesses a strongly transitive action.
It follows from Tits' classification of thick spherical and affine buildings, 
see \cite[Chapter~9, \S11.9]{Abramenko_2008_Buildings} for a survey,
that all thick finite buildings of dimension $\geq 2$ and all locally-finite thick affine buildings
of dimension $ \geq 3$ admit a strongly transitive action.

\begin{example}\label{EX:An-building}
	Let $\F$ be a field and let $n\in\N$.
	The incidence complex  of nontrivial subspaces of $\F^{n+1}$,
	denoted $A_n(\F )$, is an $(n-1)$-dimensional building of type $A_n$,
	where $A_n$ is the Coxeter diagram consisting of a single path
	of length $n-1$ with all edges labeled $3$. In more detail,
	the vertices of $A_n(\F )$ are the nontrivial subspaces of $\F^{n+1}$
	and its faces are the sets of vertices which are totally ordered by inclusion.
	The apartments of $A_n(\F )$ are induced from bases of $\F^{n+1}$
	as follows: If $E$ is a basis of $\F^{n+1}$, then the collection of
	faces $x=\{V_0,\dots,V_i\}\in A_n(\F )$ for which each $V_j$ is spanned
	by a subset of $E$ is an apartment.
	It is possible to name the vertices of the Coxeter diagram of $A_n(\F)$
	by $1,\dots,n-1$ in such a way that the type of
	a vertex is its dimension as an $\F$-vector space.
	
	The group $\nGL{\F}{n+1}$ acts on $A_n(\F)$ via its standard 
	action on $\F^{n+1}$. This action is type-preserving,
	and it is strongly transitive because
	$\nGL{\F}{n+1}$ acts transitively on the set of bases of $\F^{n+1}$.
	
	When $n=2$, the graph $A_2(\F)$ is nothing but the incidence graph
	of points and lines in the $2$-dimensional projective plane over $\F$.
\end{example}

Let $X$ be a building of type $T=T(X)$.
We  define
\[
m(X)=
m(T)=\max\Circs{\left\{2\right\}\cup\left\{n\where \text{$T$ has an edge labelled $n$}\right\}}.
\]
Observe   that $m(X)\geq m(X_z)$ for every face $z\in X$ of dimension $<\dim X-1$.
This definition is motivated by the following theorem, which we derive
from results of Evra--Kaufman  \cite{Evra_2016_cosystolic_expanders_arxiv_version}
(see also \cite{Evra_2016_cosystolic_expanders})
and Oppenheim \cite{Oppenheim_2015_vanishing_of_cohomology}.

\begin{thm}\label{TH:good-expansion-of-buildings}
	Let $X$ be a (finite) $r$-dimensional simiplicial complex such that one of the following hold:
	\begin{enumerate}[label=(\arabic*)]
		\item $X$ is a  $q$-thick spherical building admitting a strongly transitive action;
		\item the universal covering of $X$ is a  $q$-thick affine building of dimension $\geq 2$ 
		admitting a strongly
		transitive action, and the labeling of its vertices descends to $X$.
	\end{enumerate}
	In both cases, $X$ has the structure of a pure  $(r+1)$-partite simplicial complex.
	Let $w$ denote the canonical weight function of $X$ (Example~\ref{EX:regular-graph}),
	let $T$ denote the Coxeter diagram of the building   mentioned in (1) or (2)
	and put $m=m(T)$. Then $(X ,w)$ is an  $(r+1)$-partite $[-r\lambda,\lambda]$-expander
	for
	\[
	\lambda = \frac{\sqrt{m-2}}{ \sqrt{q}-(r-1)\sqrt{m-2}},
	\]
	provided $q \geq r^2(m-2)$. Furthermore, $m\leq 8$ when (1) holds, and $m\leq 6$
	when (2) holds.
\end{thm}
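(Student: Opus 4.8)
The plan is to reduce everything to a rank-$2$ statement about the links of codimension-$2$ faces, and then propagate that bound upward with Oppenheim's ``trickling down'' (garland) recursion while tracking the edge labels of the Coxeter diagram. The key observation that makes case~(2) no harder than case~(1) is that a covering map of simplicial complexes induces isomorphisms on all links: the links of faces of $X$ of dimension $\le r-2$ coincide with those of the universal covering $\tilde X$, so in case~(2) the $(r-2)$-links of $X$ are the $(r-2)$-links of the affine building $\tilde X$, which are \emph{spherical} buildings of rank $2$ by the fact quoted above (see \cite{Abramenko_2008_Buildings}). Thus in both cases every $(r-2)$-face $z\in X$ has link $X_z$ a generalized $m'$-gon, where $m'$ is the label of the single edge of $T$ remaining after deleting $t(z)$ (with $m'=2$, i.e.\ $X_z$ complete bipartite, if the two remaining vertices of $T$ are non-adjacent); in particular $m'\le m$. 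Moreover $X_z$ is again $q$-thick, since each $0$-face of $X_z$ is an $(r-1)$-face of $X$ containing $z$, hence lies in $\ge q$ $r$-faces of $X$, all of which contain $z$; and every link of a face of dimension $\le r-2$ is a thick building of rank $\ge 2$, hence has connected $1$-skeleton. This is all the input about buildings that will be used.

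Next I would establish the rank-$2$ base case: a $q$-thick generalized $m'$-gon $Y$, with its canonical weight function $w_Y$ (Example~\ref{EX:regular-graph}), is a $2$-partite $[-\lambda_0,\lambda_0]$-expander with $\lambda_0:=\frac{\sqrt{m-2}}{\sqrt q}$. If $m'=2$ this is immediate: $Y$ is complete bipartite, $\calA_{Y,w_Y}$ is the random-walk adjacency operator, and its eigenvalues on $C^0_\diamond(Y,\R)$ all vanish, $0\le\lambda_0$. If $m'\in\{3,4,6,8\}$, then by Feit--Higman $Y$ has some thick order $(s,t)$ with $s,t\ge q-1$, and $\Spec\calA_{Y,w_Y}$ is the spectrum of the normalized adjacency operator of the (biregular) incidence graph of $Y$, which is computed explicitly from the Feit--Higman data; this is the estimate of Evra--Kaufman \cite[\S5]{Evra_2016_cosystolic_expanders_arxiv_version}. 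A short case check of these eigenvalues against $s,t\ge q-1$ gives that the largest eigenvalue in $(-1,1)$ is $\le\frac{\sqrt{m'-2}}{\sqrt q}\le\lambda_0$, and since $Y$ is bipartite $\Spec\calA_{Y,w_Y}$ is symmetric about $0$ by Lemma~\ref{LM:bipartite-sym-spec}; hence the spectrum on $C^0_\diamond(Y,\R)$ lies in $[-\lambda_0,\lambda_0]$.

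With the base case in hand, I would apply Oppenheim's trickling-down theorem \cite{Oppenheim_2015_vanishing_of_cohomology} in the form for pure partite complexes. The hypotheses hold: $X$ is a pure $(r+1)$-partite $r$-complex, all its links of faces of dimension $\le r-2$ are connected, and its $(r-2)$-links are $2$-partite $[-\lambda_0,\lambda_0]$-expanders with $\lambda_0\le\frac1r$ — the last being exactly the standing assumption $q\ge r^2(m-2)$, since $\lambda_0=\frac{\sqrt{m-2}}{\sqrt q}\le\frac{\sqrt{m-2}}{r\sqrt{m-2}}=\frac1r$. The recursion then yields $\lambda_2(X)\le\lambda$ with $\lambda=\frac{\lambda_0}{1-(r-1)\lambda_0}=\frac{\sqrt{m-2}}{\sqrt q-(r-1)\sqrt{m-2}}$, i.e.\ $(X,w)$ is an $(r+1)$-partite $[-1,\lambda]$-expander; combining this with Oppenheim's refinement for $(r+1)$-partite complexes recalled in \S\ref{subsec:partite-complexes} (Lemma~\ref{LM:partite-basis-props}(iii) and \cite[Lemma~5.5]{Oppenheim_2015_vanishing_of_cohomology}) sharpens the lower end of the spectrum on $C^0_\diamond(X,\R)$ from $-1$ to $-r\lambda$, giving the asserted $(r+1)$-partite $[-r\lambda,\lambda]$-expansion.

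It remains to bound $m$. For (1): a thick finite spherical building of rank $\ge 3$ has Coxeter diagram among $A_n,B_n{=}C_n,D_n,E_6,E_7,E_8,F_4$, all with edge labels $\le 4$, while thick finite generalized $m$-gons exist only for $m\in\{2,3,4,6,8\}$ by Feit--Higman; hence $m\le 8$. For (2): the irreducible affine Coxeter diagrams $\tilde A_n,\tilde B_n,\tilde C_n,\tilde D_n,\tilde E_6,\tilde E_7,\tilde E_8,\tilde F_4,\tilde G_2$ have largest edge label $6$ (attained only by $\tilde G_2$), and a reducible affine diagram is a disjoint union of these, so $m\le 6$. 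The step I expect to be the main obstacle is the rank-$2$ estimate: one needs the Feit--Higman eigenvalue data uniformly over all thick orders $(s,t)$ and all $m'\in\{2,3,4,6,8\}$, and must verify that the resulting bound is the sharp $\frac{\sqrt{m'-2}}{\sqrt q}$ rather than merely $O(1/\sqrt q)$ — it is precisely this sharp constant that makes the trickling-down output collapse to the clean closed form $\frac{\sqrt{m-2}}{\sqrt q-(r-1)\sqrt{m-2}}$, and obtaining the matching two-sided lower bound $-r\lambda$ requires care about the regime in which the partite refinement applies.
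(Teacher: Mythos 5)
Your overall strategy---reduce to the rank-$2$ links of the $(r-2)$-faces (using that a covering map induces isomorphisms on links in case (2)), prove a base-case bound $\lambda_0\le\sqrt{(m-2)/q}$ for $q$-thick rank-$2$ buildings, and then feed $\lambda_0\le\frac1r$ into Oppenheim's partite trickling-down machinery to get $\lambda=\frac{\lambda_0}{1-(r-1)\lambda_0}=\frac{\sqrt{m-2}}{\sqrt q-(r-1)\sqrt{m-2}}$---is exactly the paper's strategy, and your bookkeeping (thickness and connectivity of links, the induced weights, the closed form of the iterate) is correct. The genuine difference is the rank-$2$ base case, which is where the paper spends most of its effort. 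You invoke the Feit--Higman classification together with the known spectra of thick generalized $m'$-gons of order $(s,t)$ and check case by case ($m'\in\{2,3,4,6,8\}$) that the nontrivial normalized eigenvalues are at most $\sqrt{(m'-2)/q}$; this check does go through (e.g.\ $s+t\le 2\max(s,t)$, $s+t+\sqrt{st}\le 3\max(s,t)$, $s+t+\sqrt{2st}\le(2+\sqrt2)\max(s,t)$, against $(s+1)(t+1)\ge q\,\max(s+1,t+1)$), but it needs the polygon to be finite and thick so that it has an order and the classification/spectral data apply. The paper instead proves the base case with no classification at all: using only the strongly transitive action it passes to the quotient of $\calA$ by a vertex stabilizer $K$, shows there are exactly $m+1$ orbits, and bounds $\Tr(\calB^2)$ of the quotient operator by a path-counting argument, getting $2+2\lambda^2\le\Tr(\calB^2)\le 2+2(m-2)/q$ directly (a weighted adaptation and sharpening of \cite{Evra_2016_cosystolic_expanders_arxiv_version}). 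So your route is shorter if the polygon spectra are taken as known, while the paper's is self-contained, uniform in $q$, and does not require thickness or the existence of an order. Two points to tidy: (a) your derivation of the lower end $-r\lambda$ via the partite refinement \cite[Lemma~5.5]{Oppenheim_2015_vanishing_of_cohomology} needs the \emph{final} $\lambda\le\frac1r$, which under $q\ge r^2(m-2)$ only holds for $q\ge(2r-1)^2(m-2)$; this is harmless because when $r\lambda\ge 1$ the claimed bound $-r\lambda\le-1$ is vacuous, but you should say so explicitly (the paper sidesteps this by citing \cite[Corollary~5.6]{Oppenheim_2015_vanishing_of_cohomology}, which gives the two-sided partite conclusion in one step from $\lambda_0\le\frac1r$); (b) your classification argument for $m\le 8$ (resp.\ $m\le 6$) is fine and is in fact more than the paper's proof records, which leaves that clause to the cited classification results.
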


\begin{proof}
	Suppose first that $r=1$. Then only case (1) is possible. 	
	Thus,
	$X$ is a $1$-dimensional building admitting a strongly transitive action
	by a group $G$. 
	The Coxeter graph of $X$ consists of two vertices
	and one edge labelled $m$,
	so    each apartment in $X$ is a cycle
	graph of length $2m$. 
	We label the  vertices of $T=T(X)$ by $0$ and $1$
	and write $X_{\{i\}}$ ($i\in\{0,1\}$) for the $0$-faces of type
	$\{i\}$ in $X$. Since $G$ acts transitively on $X_{\{0\}}$ and $X_{\{1\}}$,
	$X$ is a biregular graph.
	Write $n_i=|X_{\{i\}}|$  and let $k_i$ denote the number of edges
	containing a $0$-face in $X_{\{i\}}$. Then $|X(1)|=n_0k_0=n_1k_1$,
	which means that $w(e)=\frac{1}{n_0k_0}=\frac{1}{n_1k_1}$
	for all $e\in X(1)$  and 
	$w(x )=\frac{1}{2n_i}$ for all $x\in X_{\{i\}}$.
	We may assume without loss of generality that $k_0\leq k_1$.
	Note also that
	$q\leq k_0$.
	We now adapt 
	the proofs of \cite[Propositions~5.21, 5.22]{Evra_2016_cosystolic_expanders_arxiv_version}
	to our  \emph{weighted graph} situation,
	and also improve the expansion constants, in order to show that
	$(X,w)$ is a bipartite $[-\sqrt\frac{m-2}{{q}},\sqrt\frac{m-2}{{q}}]$-expander.

	Let $f'\in C^0_\diamond(X,\R)$ (notation as in \S\ref{subsec:partite-complexes})
	denote a nonzero eigenfunction  of $\calA:=\calA_{X,w}$ 
	with eigenvalue $\lambda$. We need to prove that $\lambda^2\leq \frac{m-2}{q}$.
	It is enough to consider the case $\lambda\neq 0$.
	In this case,   $f'$ cannot vanish on $X_{\{0\}}$. Thus, we may choose
	$s\in X_{\{0\}}$ with $f'(s)\neq 0$ and put $K=\{g\in G\suchthat gs=s\}$. 
	By applying Lemma~\ref{LM:bipartite-sym-spec} to $f'$, we
	see that
	there exists   $f''\in C^0_{\diamond}(X,\R)$ with $\calA f''=-\lambda f''$
	and $f''(s)\neq 0$.
	
	Given $0$-faces $x,y\in X(0)$,
	we
	denote the $K$-orbit of $x\in X(0)$ by $[x]$,
	and write  $d(x,y)$ for
	the  length
	of the shortest path from  $x$ to $y$ in $X$.
	We claim that following hold (cf.\ \cite[Definition~5.20]{Evra_2016_cosystolic_expanders_arxiv_version}):
	\begin{enumerate}[label=(\roman*)]
		\item The number of $K$-orbits in $X(0)$ is exactly  $m+1$.
		\item For all $i\in\{0,1\}$, the $0$-faces in $X_{\{i\}}$ of maximal distance
		from $s$ form a $G$-orbit. This maximal distance is $m-((m+i)\bmod 2)$.
		\item If $x\in X(0)-\{s\}$ is not of maximal distance from $s$,
		then there is exactly one $0$-face $y$ adjacent to $x$ with $d(s,y)<d(s,x)$.
	\end{enumerate}
	This is similar to the proof of
	\cite[Propositions~5.22]{Evra_2016_cosystolic_expanders_arxiv_version}:
	To see (i), fix an apartment $E$ containing $s$
	and apply \cite[Lemma~5.16]{Evra_2016_cosystolic_expanders_arxiv_version}
	to conclude that every $K$-orbit in $X(0)$ meets $E$. Let $x,y\in E(0)$ be the $0$-faces adajcent to $s$.
	By the strong transitivity of the $G$-action,   there is $g\in G$ such that $g(E)=E$
	and $g\{s,x\}=g\{s,y\}$. Since $G$ preserves types,   $g(s)=s$, so $g$ is a reflection
	of the $2m$-cycle   $E$ fixing $s$. This means that all except possibly $2$ $K$-orbits
	meet $E$ in at least two vertices, so there can be at most $m+1$ $K$-orbits.
	On the other hand, since the action of $G$ preserves distances and there are $0$-faces
	of distance $m$ in $X$, the number of $K$-orbits on $X(0)$ must be at least $m+1$. This proves (i).
	The first assertion of (ii) is shown exactly as in {\it op.\ cit.},
	and the second assertion follows from the fact that $s\in X_{\{0\}}$ and each apartment
	is a cycle of length $2m$.
	As for (iii), let $D$ be the set of  $0$-faces  $y\in X(0)$
	adjacent to $x$ with $d(s,y)<d(s,x)$. It is shown in 
	the proof \cite[Propositions~5.22]{Evra_2016_cosystolic_expanders_arxiv_version} that
	there is an apartment $E$ of $X$ with $s,x\in E$ and $D\subseteq E$.
	Since $E$ is a cycle graph, $D$ must be a singleton.

	Next, let us regard $\calA=\calA_{X,w}$ as an operator from the \emph{complex} vector space $C^0(X,\C)$ to
	itself; this does not affect the spectrum of $\calA$. Put $Y=K{\setminus} X$ 
	and let $C^0(Y,\C)$ denote the set of functions from $Y$ to $\C$ (note that $Y$ is not a graph
	in general). We define a linear operator $\calB:C^0(Y,\C)\to C^0(Y,\C)$ by
	\[(\calB g)[x] = \sum_{y\in X(1)_x} \frac{w(x\cup y)}{2w(x)} g[y]\]
	for all $g\in C^0(Y,\C)$, $[x]\in K\leftmod X(0)$.
	Note that this does not depend on the representative $x$ in the orbit $[x]$. 
	
	We claim that $\Spec \calB\subseteq \Spec \calA$; in particular, $\Spec \calB\subseteq \R$.
	Indeed, given $\mu\in \C$ and $g\in C^0(Y,\C)$ with $\calB g=\mu g$,
	the function $f\in C^0(X,\C)$ defined by $f(x)=g[x]$ satisfies
	$\calA f=\mu f$ because, for all $x\in X(0)$,
	\begin{align*}
		(\calA f)(x) &=
		\sum_{y\in X(0)_x} \frac{w(x\cup y)}{w(x)} f(y)
		=
		\sum_{y\in X(0)_x} \frac{w(x\cup y)}{w(x)} g[y]
		=(\calB g)[x] =\mu g[x] = \mu f(x).
	\end{align*}
	
	Next, we claim that the $\calA$-eigenvalue $\lambda$ corresponding
	to $f'\in C_{\diamond}^0(X,\R)$ is in $\Spec \calB$. Indeed,
	define $g' \in C^0(Y,\C)$   by $g'[y]=\sum_{k\in K} f'(ky)$
	and note that $g'[s]=|K|f'(s)\neq 0$. Then, for
	every $x\in X(0)$, 
	\begin{align*}
		(\calB g')[x] & = \sum_{y\in X(1)_x} \frac{w(x\cup y)}{2w(x)} g'[y]
		=\sum_{y\in X(1)_x}\sum_{k\in K} \frac{w(x\cup y)}{2w(x)} f'(ky) \\
		&=\sum_{k\in K} \sum_{y\in X(1)_x}\frac{w(kx\cup ky)}{2w(kx)} f'(ky)
		=\sum_{k\in K} \sum_{y\in X(1)_{kx}}\frac{w(kx\cup  y)}{2w(kx)} f'( y)\\
		&=\sum_{k\in K} (\calA f)(kx) =\sum_{k\in K} \lambda f(kx)=\lambda g'[x].
	\end{align*}
	Thus,  $\calB g'=\lambda g'$ and
	$\lambda\in\Spec \calB$. Applying this argument for the $\calA$-eigenfunctions
	$f''$, $1_{X(0)}$ and $1_{X_{\{0\}}}-1_{X_{\{1\}}}$ shows that
	we also have $1,-1,-\lambda \in \Spec B$.
	Since $\lambda\neq -\lambda$ 
	(because $\lambda\neq 0$) 
	and $\Spec \calB\subseteq \R$, this means that
	\begin{align}\label{EQ:TH:good-expansion-of-buildings:eq1}
	2+2\lambda^2= (-1)^2+(-\lambda)^2+\lambda^2+1^2\leq \Tr(\calB^2).
	\end{align}
	
	We now bound $\Tr(\calB^2)$ from above. Fix a set of representatives
	$U$ for $K{\setminus} X(0)$ and write $1_{[x]}\in C^0(Y,\C)$ for the characteristic
	function of $\{[x]\}$. Then
	\begin{align*}
		\Tr(\calB^2) &=\sum_{x\in U}(\calB^2 1_{[x]})[x] 
		=\sum_{x\in U}\sum_{y\in X(1)_x} \frac{w(x\cup y)}{2w(x)}(\calB 1_{[x]})[y]\\
		&=\sum_{x\in U}\sum_{y\in X(1)_x}\sum_{z\in X(1)_y} \frac{w(x\cup y)w(y\cup z)}{4w(x)w(y)}1_{[x]}[z]
		=\frac{1}{k_0k_1}\sum_{x\in U}|L(x)|,
	\end{align*}
	where $L(x)$ is the set of triples $(x,y,z)\in X(0)^3$ with   $\{x,y\},\{y,z\}\in X(1)$ and $z\in [x]$.
	Let $s' $  be the unique $0$-face in
	$U$ with $d(s,s')=m$ and let $s''$ be the unique
	$0$-face in $U$ with   $d(s,s'')=m-1$; they exist by (ii) above.
	Put $\epsilon=m\bmod 2$ and note that $s'\in X_{\{\veps\}}$.
	We analyze the size of $L(x)$ by splitting into four cases:
	\begin{enumerate}[label=\Roman*)]
		\item $x=s' $: Let $(s' ,y,z)\in L(s' )$. There are $k_{\epsilon}$ possibilities for $y$.
		Since $d(s,y)=m-1>0$, there is some $z'\in X(1)_y$ with $d(s,z')<d(s,y)<d(s,s' )=d(s,z)$,
		so $z'\notin [z]$. This means that, for each $y$, there are at most $k_{1-\epsilon}-1$ possibilities
		for $z$. Thus, $|L(s')|\leq k_\epsilon(k_{1-\epsilon}-1)=k_0k_1-k_{1-\epsilon}$.
		\item $x=s''$: $|L(s'')|\leq k_0k_1$; in fact, this holds for any $x\in U$.
		\item $x=s$: Since $[s]=\{s\}$,
		we have $L(s)=\{(s,y,s)\where y\in X(1)_{s}\}$, so $|L(s)|=k_{0}$.
		\item $x\neq s,s',s''$: Write $t(x)=\{i\}$.
		If  $(x,y,z)\in L(x)$, then  both $x$ and $y$ are not of 
	maximum distance from $s$ in $X(0)$. Noting that $d(s,x)=d(s,z)$ (because $z\in [x]$),
	(iii) implies that $y$ is uniquely determined by $x$ if $d(s,y)<d(s,x)$ and $z$ is uniquely
	determined by $y$ in if $d(s,y)>d(s,z)$.
	Thus, $|L(x)|\leq 1\cdot k_{1-i} + k_i \cdot 1=k_0+k_1$.
	\end{enumerate}
	By (i), $|U|\leq m+1$, so we conclude that
	\[
	\Tr(\calB^2)\leq \frac{(k_0k_1-k_{1-\epsilon})+k_0k_1+k_{0}+(m-2)(k_0+k_1)}{k_0k_1} \leq 2+\frac{2(m-2)}{ q},
	\]
	where the inequality holds because $q\leq k_0\leq k_{1-\epsilon}$.
	Combining this with \eqref{EQ:TH:good-expansion-of-buildings:eq1}
	gives the desired conclusion
	\[
	\lambda^2 \leq   \frac{m-2}{ q} .
	\]
	This proves the theorem when $r=1$.
	
	Suppose now that $r>1$. Assumptions  (1) and (2) imply that 
	all the positive-dimensional links of $X$ are connected,
	and for every
	$z\in X(r-2)$, the complex $X_z$ is a $1$-dimensional spherical building admitting
	a strongly transitive action  and having $m(X_z)\leq m $. 
	Since we proved the theorem when $r=1$, 
	the weighted
	graph $(X_z,w_{X_z})$ is a $2$-partite $[-\sqrt{\frac{m-2}{ q}},\sqrt{\frac{m-2}{ q}}]$-expander.
	Moreover, our assumption $q\geq r^2(m-2)$ implies that
	$\sqrt{\frac{m-2}{q}}\leq \frac{1}{r }$.
	Thus,  by a theorem of Oppenheim \cite[Corollary~5.6]{Oppenheim_2015_vanishing_of_cohomology}\footnote{
		There is a typo in \cite[Corollary~5.6]{Oppenheim_2015_vanishing_of_cohomology}:
		 the expression
		``$1-(n-k)f^{n-k-2}(\lambda)$'' should be ``$1+(n-k-1)f^{n-k-2}(\lambda)$''.
	}
	(see also the formula at the end of \cite[Theorem~1.4]{Oppenheim_2015_vanishing_of_cohomology}), 
	$(X,w)$ is an $(r+1)$-partite $[-r\lambda,\lambda]$-expander for
	\[\lambda = 1-\frac{r(1-\sqrt{\frac{m-2}{ q}})-(r-1)}{(r-1)(1-\sqrt{\frac{m-2}{ q}})-(r-2)}
	=\frac{\sqrt{m-2}}{ \sqrt{q}-(r-1)\sqrt{m-2}}.\qedhere\]
\end{proof}

\begin{example}
	Let $\F_q$ be a finite field
	with $q$ elements and let  $X=A_2(\F_q)$ (notation as in Example~\ref{EX:An-building}). 
	Then $X$ is a $(q+1)$-thick building
	admitting a strongly transitive action, and $m(X)=m(A_2)=3$.
	Thus, by Theorem~\ref{TH:good-expansion-of-buildings},
	$(X,w_X)$ is a bipartite $[-\frac{1}{\sqrt{q+1}},\frac{1}{\sqrt{q+1}}]$-expander.
	This agrees with the well-known fact that $\Spec (\calA_{X,w})= 
	\{\pm 1,\pm\frac{\sqrt{q}}{q+1}\}$ with $1$
	and $-1$ occurring with multiplicity $1$.
	(In fact, the counting argument in cases I)--IV) in the proof can be slightly improved
	to make the bounds match.) 
\end{example}

\begin{cor}\label{CR:cbe-buildings-constant}
	Let $X,w,r,q,m$ be as in Theorem~\ref{TH:good-expansion-of-buildings}
	and assume   $q\geq r^2({m-2})$.
	Then  $(X,w,\aug{R})$
	is a $(1-\frac{\sqrt{m-2}}{\sqrt{q}-(r-1)\sqrt{m-2}})$-coboundary
	expander in dimension $0$  for every nontrivial abelian group $R$.
\end{cor}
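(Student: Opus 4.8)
The plan is to obtain this immediately by combining Theorem~\ref{TH:good-expansion-of-buildings} with Corollary~\ref{CR:expanders-are-cb-exps}; the one point that needs a moment's attention is upgrading the $(r+1)$-partite expansion provided by the former to ordinary $[-1,\lambda]$-expansion as required by the latter.

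Set $\lambda:=\frac{\sqrt{m-2}}{\sqrt{q}-(r-1)\sqrt{m-2}}$. Since $m=m(T)\geq 2$ the numerator is $\geq 0$, and the hypothesis $q\geq r^2(m-2)$ gives $\sqrt{q}\geq r\sqrt{m-2}>(r-1)\sqrt{m-2}$, so the denominator is positive and $0\leq\lambda\leq 1$ (the case $m=2$, where $\lambda=0$, being trivial). By Theorem~\ref{TH:good-expansion-of-buildings}, $(X,w)$ is then an $(r+1)$-partite $[-r\lambda,\lambda]$-expander, i.e.\ $\Spec\bigl(\calA_{X,w}|_{C^0_\diamond(X,\R)}\bigr)\subseteq[-r\lambda,\lambda]$.

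Next I would pass to ordinary spectral expansion. Write $L=\Span_{\R}\{1_{X_{\{0\}}},\dots,1_{X_{\{r\}}}\}$, so that $C^0(X,\R)=L\oplus C^0_\diamond(X,\R)$ and, because $1_{X(0)}=\sum_{i}1_{X_{\{i\}}}\in L$, also $C^0_\circ(X,\R)=(L\cap 1_{X(0)}^\perp)\oplus C^0_\diamond(X,\R)$. By Lemma~\ref{LM:partite-basis-props}(iii) the eigenvalues of $\calA_{X,w}$ on $L$ are $1,-\tfrac1r,\dots,-\tfrac1r$, the $1$-eigenspace being $\R\cdot 1_{X(0)}$; hence $\calA_{X,w}$ acts on $L\cap 1_{X(0)}^\perp$ with the single eigenvalue $-\tfrac1r$, and therefore
\[
\Spec\bigl(\calA_{X,w}|_{C^0_\circ(X,\R)}\bigr)=\{-\tfrac1r\}\cup\Spec\bigl(\calA_{X,w}|_{C^0_\diamond(X,\R)}\bigr).
\]
The key observation is that $\Spec\calA_{X,w}\subseteq[-1,1]$ unconditionally (Lemma~\ref{LM:adjacency-basic-properties}(ii)), so the lower bound $-r\lambda$ coming from the partite structure may be replaced harmlessly by $-1$; since moreover $-\tfrac1r\in[-1,0]\subseteq[-1,\lambda]$ (using $\lambda\geq 0$), we get $\Spec\bigl(\calA_{X,w}|_{C^0_\circ(X,\R)}\bigr)\subseteq[-1,\lambda]$, i.e.\ $(X,w)$ is a $[-1,\lambda]$-expander in the sense of \S\ref{subsec:expansion}.

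Finally, applying Corollary~\ref{CR:expanders-are-cb-exps} to the $[-1,\lambda]$-expander $(X,w)$ and to the nontrivial abelian group $R$ yields that $(X,w,\aug{R}_X)$ is a $(1-\lambda)$-coboundary expander in dimension $0$, which is the assertion. I do not expect a genuine obstacle here: the only content beyond invoking these two results is the elementary remark that a $q$-thick building, although Theorem~\ref{TH:good-expansion-of-buildings} only certifies it as an $(r+1)$-partite $[-r\lambda,\lambda]$-expander, is automatically a $[-1,\lambda]$-expander because the normalized adjacency operator of any weighted graph has no eigenvalue below $-1$.
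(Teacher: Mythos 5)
Your proposal is correct and follows the paper's own route exactly: combine Theorem~\ref{TH:good-expansion-of-buildings} with Corollary~\ref{CR:expanders-are-cb-exps}. The paper leaves implicit the upgrade from $(r+1)$-partite $[-r\lambda,\lambda]$-expansion to $[-1,\lambda]$-expansion, which you correctly supply via Lemma~\ref{LM:partite-basis-props}(iii) and the unconditional bound $\Spec\calA\subseteq[-1,1]$.
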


\begin{proof}
	This follows from Theorem~\ref{TH:good-expansion-of-buildings} and
	Corollary~\ref{CR:expanders-are-cb-exps}.
\end{proof}

\begin{lem}\label{LM:edge-vertex-weight-ratio}
	Let $X$ be a pure $r$-dimensional (finite) simplicial complex
	and let $w$ be its canonical weight function.
	Suppose that $X$ is $q$-thick, i.e., every $(r-1)$-face of $X$ is contained in at least $q$ $r$-faces.
	Then, for every edge $e\in X(1)$ and $0$-face $v\subseteq e$, we
	have $\frac{w(e)}{w(v)}\leq \frac{2}{q+r-1}$.
\end{lem}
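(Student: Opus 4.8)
The plan is to reduce the statement to a counting inequality about top-dimensional faces and then establish that inequality by a double-counting argument.

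First I would rewrite the ratio $w(e)/w(x)$ in terms of the numbers of $r$-faces through $x$ and through $e$. Since $w$ is the canonical weight function, $w(\sigma)=1/|X(r)|$ for every $\sigma\in X(r)$, hence $w\bigl(X(r)_{\supseteq z}\bigr)=|X(r)_{\supseteq z}|/|X(r)|$ for any face $z$. Applying \eqref{EQ:weight-of-containing-cells} with $\ell=r$ to the $0$-face $x$ (so $k=0$) and to the $1$-face $e$ (so $k=1$) yields $w(x)=|X(r)_{\supseteq\{x\}}|\big/\bigl((r+1)|X(r)|\bigr)$ and $w(e)=|X(r)_{\supseteq e}|\big/\bigl(\binom{r+1}{2}|X(r)|\bigr)$. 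Writing $A=|X(r)_{\supseteq e}|$ and $B=|X(r)_{\supseteq\{x\}}|$, these combine to $\frac{w(e)}{w(x)}=\frac{2A}{rB}$, so the lemma is equivalent to the purely combinatorial inequality $(q+r-1)A\le rB$.

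To prove this inequality, write $e=\{x,y\}$ and split $X(r)_{\supseteq\{x\}}$ into the $A$ top faces that contain $y$ and the $B-A$ top faces that avoid $y$. For each $\sigma\in X(r)_{\supseteq e}$ set $\rho_\sigma=\sigma\setminus\{y\}$: this is an $(r-1)$-face containing $x$ but not $y$, and $\sigma$ is the \emph{only} top face that contains $\rho_\sigma$ and also contains $y$ (any such face must contain $\rho_\sigma\cup\{y\}=\sigma$, hence equals $\sigma$). By $q$-thickness $\rho_\sigma$ lies in at least $q$ top faces; each of them contains $x$, exactly one contains $y$, so at least $q-1$ of them lie in $X(r)_{\supseteq\{x\}}$ and avoid $y$. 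Since $\sigma\mapsto\rho_\sigma$ is injective, I would then count the incidences $(\sigma,\tau)$ with $\sigma\in X(r)_{\supseteq e}$, $\tau\in X(r)_{\supseteq\{x\}}$, $y\notin\tau$ and $\rho_\sigma\subseteq\tau$: summing over $\sigma$ gives at least $(q-1)A$, while summing over $\tau$ gives at most $r(B-A)$, because for fixed $\tau$ the face $\rho_\sigma$ must be one of the $r$ faces of $\tau$ of dimension $r-1$ that contain $x$, and then $\sigma=\rho_\sigma\cup\{y\}$ is determined. Hence $(q-1)A\le r(B-A)$, i.e.\ $(q+r-1)A\le rB$, which is exactly what is needed.

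The argument is short and I do not expect a genuine obstacle; the one point requiring care is the observation that $\rho_\sigma$ pins down $\sigma$ among the top faces through $y$, since that is precisely what converts $q$-thickness into the sharp constant $q+r-1$ rather than a weaker bound. I would also keep in mind the two degenerate cases as sanity checks: when $q=1$ the inequality $(q-1)A\le r(B-A)$ is the trivial $0\le r(B-A)$, and when $r=1$ it reads $q\le B=\deg(x)$, which is just $q$-thickness for graphs, so the general computation should specialize correctly in both.
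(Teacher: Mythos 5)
Your proposal is correct and follows essentially the same route as the paper: reduce to the counting inequality $\frac{w(e)}{w(x)}=\frac{2|X(r)_{\supseteq e}|}{r|X(r)_{\supseteq x}|}$ and then double-count incidences between top faces through $e$ and top faces through $x$ avoiding $y$, using $q$-thickness of the $(r-1)$-faces $\sigma\setminus\{y\}$ to get $(q-1)A\le r(B-A)$. The paper phrases the second count via the sets $N(c)=X(r)_{\supseteq c-y}-\{c\}$ and the containment $c\subseteq c'\cup y$, but this is the same bookkeeping as your ``at most $r$ choices of $\rho_\sigma$ inside a fixed $\tau$'' step.
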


\begin{proof}
	Fix $e\in X(1)$ and let $u,v$ be the $0$-faces of $e$.
	It follows readily
	from the defining properties
	of $w$ in \S\ref{subsec:weights}
	that $\frac{w(e)}{w(v)}=\frac{2|X(r)_{\supseteq e}|}{r|X(r)_{\supseteq v}|}$.
	For $c\in X(r)_{\supseteq e}$,
	define $N(c):=X(r)_{\supseteq c-u} -\{c\}$.
	Since $X$ is $q$-thick,
	$|N(c)|\geq q-1$, and thus
	$\sum_{c\in X(r)_{\supseteq e}} |N(c)|\geq (q-1)|X(r)_{\supseteq e}|$.
	
	We now bound $\sum_{c\in X(r)_{\supseteq e}} |N(c)|$ from above.
	Let $c\in X(r)_{\supseteq e}$.
	Then every member of $c'\in  N(c)$
	lies in $X(r)_{\supseteq v}-X(r)_{\supseteq e}$.
	Fixing $c'\in X(r)_{\supseteq v}-X(r)_{\supseteq e}$,
	we claim that  $c'$ occurs as a member of $N(c)$ for 
	at most $r$ faces $c\in X(r)_{\supseteq e}$.
	Indeed, if $c\in X(r)_{\supseteq e}$ is an $r$-face
	such that $c'\in N(c)$, then $c\cup c'=c'\cup u$, which means that $e\subseteq c\subseteq c'\cup u$.
	Since  $|c'\cup u|=r+2$ and $|e|=2$, there are at most $r$ possibilities
	for $c$, proving our claim. 
	As a result of the claim, $
	r\left(|X(r)_{\supseteq v}|-|X(r)_{\supseteq e}|\right)\geq
	\sum_{c\in X(r)_{\supseteq e}}|N(c)|$.
	
	Putting everything together gives
	\[
	r\left(|X(r)_{\supseteq v}|-|X(r)_{\supseteq e}|\right)\geq
	\sum_{c\in X(r)_{\supseteq e}} |N(c)|\geq (q-1)|X(r)_{\supseteq e}|,
	\]
	and by rearranging, we find that
	$\frac{|X(r)_{\supseteq e}|}{|X(r)_{\supseteq v}|}\leq \frac{r}{q+r-1}$,
	so $\frac{w(e)}{w(v)}\leq \frac{2}{r}\cdot \frac{r}{q+r-1}=\frac{2}{q+r-1}$.
\end{proof}

\begin{cor}\label{CR:expansion-of-quotient-sheaves-on-buildings}
	Let $X,w,r,q,m$ be as in Theorem~\ref{TH:good-expansion-of-buildings}
	and assume $q\geq r^2({m-2})$.
	Let $R$ be a  abelian group and let $\{R_x\}_{x\in X-\{\emptyset\}}$ be subgroups
	of $R$ satisfying
	conditions (1) and (2) of
	Theorem~\ref{TH:cbe-for-quotient-sheaves} (after setting $R_\emptyset = \{0_R\}$), 
	e.g., such that the summation map $\bigoplus_{x\in X}R_x\to R$ is injective.
	For every $x\in X$, put $\calG(x)=\sum_{y\subseteq x} R_y$
	so that $\calG$ becomes a subsheaf of $R_X$.
	Then 
	\[
	\cb_0(X,w,R_X/\calG)\geq \frac{2r}{5r+2} -
	\frac{(4r^3+4r)\sqrt{m-2}}{(5r+2)(\sqrt{q}-(r-1)\sqrt{m-2})}
	-
	\frac{14r +4}{(5r+2)(q+r-1)}.
	\]
\end{cor}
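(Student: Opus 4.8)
The plan is to read off the coboundary-expansion bound from the general partite result Theorem~\ref{TH:cbe-for-partite-quotient-sheaves}, feed in the spectral estimate for buildings from Theorem~\ref{TH:good-expansion-of-buildings}, and absorb the two weight-dependent error terms using the thickness estimate of Lemma~\ref{LM:edge-vertex-weight-ratio}.

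First I would note that the hypotheses of Theorem~\ref{TH:cbe-for-quotient-sheaves} hold by assumption (and conditions (1), (2) are automatic once the summation map $\bigoplus_{x\in X}R_x\to R$ is injective, since then every subcollection of the $R_x$ is linearly disjoint), so $\calG$ is a subsheaf of $\aug{R}_X$ and Theorem~\ref{TH:cbe-for-partite-quotient-sheaves} applies. By Theorem~\ref{TH:good-expansion-of-buildings}, the hypothesis $q\ge r^2(m-2)$ makes $(X,w)$ an $(r+1)$-partite $[-r\lambda_0,\lambda_0]$-expander with $\lambda_0=\frac{\sqrt{m-2}}{\sqrt q-(r-1)\sqrt{m-2}}$; moreover $0\le \lambda_0\le 1$, so $\lambda_0\ge -\tfrac1r$ and $5r+2-2r\lambda_0\ge 3r+2>0$. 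Applying Theorem~\ref{TH:cbe-for-partite-quotient-sheaves} with $\lambda=\lambda_0$, $\mu=-r\lambda_0$ (so that $\max\{|\lambda|,|\mu|\}=r\lambda_0$ because $r\ge 1$), the numerator $2r-4r\lambda-4r^2\max\{|\lambda|,|\mu|\}-(5r+2)t-2rs$ collapses to $2r-(4r^3+4r)\lambda_0-(5r+2)t-2rs$, giving
\[
\cb_0(X,w,\aug{R}_X/\calG)\ \ge\ \frac{2r-(4r^3+4r)\lambda_0-(5r+2)t-2rs}{5r+2-2r\lambda_0}.
\]

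Next I would bound the weight parameters. Lemma~\ref{LM:edge-vertex-weight-ratio} gives $t\le \frac{2}{q+r-1}$ from $q$-thickness, and since $w(e)\le t\,w(x)\le t$ for any edge $e$ with vertex $x\subseteq e$, also $s=\max_e w(e)\le t\le\frac{2}{q+r-1}$; hence $(5r+2)t+2rs\le \frac{2(5r+2)+4r}{q+r-1}=\frac{14r+4}{q+r-1}$. To conclude I would split on the sign of the numerator $N:=2r-(4r^3+4r)\lambda_0-(5r+2)t-2rs$: if $N<0$ then $2r-(4r^3+4r)\lambda_0<(5r+2)t+2rs\le\frac{14r+4}{q+r-1}$, so the asserted right-hand side is negative while $\cb_0\ge 0$, and the bound holds trivially; if $N\ge 0$ then, using $5r+2-2r\lambda_0\le 5r+2$,
\[
\cb_0(X,w,\aug{R}_X/\calG)\ \ge\ \frac{N}{5r+2}\ \ge\ \frac{2r-(4r^3+4r)\lambda_0}{5r+2}-\frac{14r+4}{(5r+2)(q+r-1)},
\]
and substituting the value of $\lambda_0$ yields exactly the stated inequality. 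The whole argument is routine bookkeeping; the only points warranting care are verifying $\lambda_0\le 1$ (hence $5r+2-2r\lambda_0>0$ and $\lambda_0\ge-\tfrac1r$) from $q\ge r^2(m-2)$, correctly identifying $\max\{|\lambda_0|,|-r\lambda_0|\}=r\lambda_0$, and handling the sign of the numerator when enlarging the denominator to $5r+2$ — so I do not expect a genuine obstacle.
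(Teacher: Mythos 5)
Your proposal is correct and follows essentially the same route as the paper: invoke Theorem~\ref{TH:good-expansion-of-buildings} for the $(r+1)$-partite $[-r\lambda,\lambda]$-expansion, bound $t$ and $s$ via Lemma~\ref{LM:edge-vertex-weight-ratio}, and feed everything into Theorem~\ref{TH:cbe-for-partite-quotient-sheaves}. The only difference is that you spell out the final bookkeeping (the sign split on the numerator and the enlargement of the denominator to $5r+2$) that the paper dismisses with the parenthetical remark that the theorem's constant is slightly better than the simpler expression stated in the corollary.
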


Note that the lower-bound on $\cb_0(X,w,R_X/\calG)$ approaches
$\frac{2r}{5r+2}$ as the thickness $q$ tends to $\infty$.

\begin{proof}
	By Theorem~\ref{TH:good-expansion-of-buildings},
	$(X,w)$ is an $(r+1)$-partite $[-r\lambda, \lambda]$-expander
	for $\lambda=\frac{\sqrt{m-2}}{q-(r-1)\sqrt{m-1}}$,
	and by Lemma~\ref{LM:edge-vertex-weight-ratio},
	we have $t:=\max\{\frac{w(e)}{w(x)}\where e\in X(1), x\in X(0)_{\subseteq e}\}\leq 
	\frac{2}{q+r-1}$. In particular, $s:=\max\{w(e)\where e\in X(1)\}\leq t= \frac{2}{q+r-1}$.
	Now apply Theorem~\ref{TH:cbe-for-partite-quotient-sheaves}. (The lower bound
	guaranteed   by that theorem is slightly better than the simpler
	expression given in the corollary.)
\end{proof}

\begin{remark}\label{RM:expansion-of-quotient-sheaves-on-buildings}
	When $r=\dim X>1$,
	the  constants $t$ and $s$ of Theorem~\ref{TH:cbe-for-partite-quotient-sheaves}
	are often much smaller than the bounds used in the proof of Corollary~\ref{CR:expansion-of-quotient-sheaves-on-buildings}. 
	Using a better upper bound on  $t$ and $s$ 
	will result in decreasing
	the right fraction 
	in the lower bound for $\cb_0(X,w,R_X/\calG)$ in Corollary~\ref{CR:expansion-of-quotient-sheaves-on-buildings}.	
	Also, when $r=\dim X=1$, we may eliminate $s$ by Remark~\ref{RM:eliminating-s},
	thus changing the said  right fraction  to
	$t=\frac{2}{ (q+r-1)}$.
\end{remark}

\begin{example}
	The lower bounds on $\cb_0(X,w,\aug{R}_X/\calG)$ 
	provided by Corollary~\ref{CR:expansion-of-quotient-sheaves-on-buildings}
	and Remark~\ref{RM:expansion-of-quotient-sheaves-on-buildings}
	are detailed in the following table for some spherical $q$-thick   buildings of dimensions
	$1$ and $2$ admitting a strongly transitive action.
	\begin{center}
	\begin{tabular}{|c|c|c|l|l|}
	\hline
	$\dim X$ & $T(X)$ & $m$ & $\cb_0(X,w,\aug{R}_X/\calG)$ & $ > 0$ if \\
	\hline
	$1$ & $A_2$ & $3$ & $\frac{2}{7}-\frac{8 }{7\sqrt{q}}-\frac{2}{q}$ & $q\geq 29$ \\
	& $C_2$ & $4$ & $\frac{2}{7}-\frac{8\sqrt{2} }{7 \sqrt{q} }-\frac{2}{q}$ & $q\geq 45$ \\
	& $G_2$ & $6$ & $\frac{2}{7}-\frac{16 }{7 \sqrt{q} }-\frac{2}{q}$ & $q\geq 78$ \\
	\hline
	$2$ & $A_3$ & $3$ & $\frac{1}{3}-\frac{10 }{3(\sqrt{q}-1)}-
	\frac{8}{3(q+1)}$ & $q\geq 136$ \\
	 & $C_3$ & $4$ & $\frac{1}{3}-\frac{10\sqrt{2} }{3(\sqrt{q}-\sqrt{2})}-
	\frac{8}{3(q+1)}$ & $q\geq 257$\\
	\hline
	\end{tabular}	
	\end{center} 
\end{example}

\section{Further Questions}
\label{sec:questions}

We finish with several questions about possible
extensions of Theorems~\ref{TH:cbe-for-quotient-sheaves} and~\ref{TH:cbe-for-partite-quotient-sheaves}.

If not   indicated otherwise, 
$(X,w)$ is assumed to be a weighted graph (resp.\ $(r+1)$-partite weighted simplicial complex)
which is   a $[-\lambda,\lambda]$-expander (resp.\ $(r+1)$-partite $[-\lambda,\lambda]$-expander)
for some
$\lambda>0$. 
We let $R$ be a nontrivial abelian group, $\{R_x\}_{x\in X}$ be subgroups of $R$ with $R_\emptyset=0$,
and define the subsheaf $\calG$ of $\aug{R}_X$   as in Theorem~\ref{TH:cbe-for-quotient-sheaves},
i.e., $\calG(x)=\sum_{y\subseteq x}R_y$.

\medskip

As $\lambda\to 0^+$, the lower bound on the $0$-dimensional coboundary expansion  of
$(X,w,R_X/\calG)$ provided by 
Theorem~\ref{TH:cbe-for-quotient-sheaves}
(resp.\ Theorem~\ref{TH:cbe-for-partite-quotient-sheaves}) approaches $\frac{2}{5}$ (resp.\
$\frac{2r}{5r+2}$). We expect that this could be improved. 

\begin{que}
	Provided that all the $\{R_x\}_{x\in X}$ are linearly disjoint in $R$,
	is it the case that $\cb_0(X,w,R_X/\calG)$
	approaches $1$ as $\lambda\to 0^+$?
\end{que}

Next, we ask whether condition (1) of Theorem~\ref{TH:cbe-for-quotient-sheaves} can
be relaxed. 

\begin{que}\label{QE:order-of-magnitude}
	Let $m:\N\to \N\cup\{0\}$ be a    function satisfying $0\leq m(n)\leq n$ for all $n\in\N$.
	Assuming $\lambda$ is fixed and sufficienly small, 
	does the coboundary expansion of $(X,w,R_X/\calG)$ in dimension $0$
	remains bounded away from $0$ if (instead of condition (1) of Theorem~\ref{TH:cbe-for-quotient-sheaves}) 
	we require that every
	$m(|X(0)|)$ of the  $\{R_x\}_{x\in X}$ are linearly disjoint in $R$? More specifically:
	\begin{enumerate}[label=(\alph*)]
		\item Can we take $m = o(n)$? Can we take $m=O(1)$?
		\item What if we also require that $R_x=0$ when $x$ is not a vertex?
	\end{enumerate}
\end{que}

The motivation behind the variant (b) is  that in \cite[\S9]{First_2023_sheaves_on_complexes_preprint}, where Theorem~\ref{TH:cbe-for-partite-quotient-sheaves}
is applied,
we only need 
the special case where $R_x=0$ for all $x\in X-X(0)$. Also, if one allows $R_x$ to be nonzero when $x$ is an edge,
then it is impossible to take $m=O(\log n)$
as the following example shows.

\begin{example}
Fix an integer $k\geq 3$ such that $q:=k-1$ is an odd prime power.
It is known \cite[Thm.~4.13]{Morgenstern_1994_explicit_Ramanujan_graphs}
that there exists an infinite family $\{X_i\}_{i\in\N}$
of $k$-regular Ramanujan graphs, i.e.\ $\lambda(X_i)\leq \frac{2\sqrt{k-1}}{k}$ 
for all $i$,
such that 
the girth of $X_i$ is greater than $\frac{4}{3} \log_q |X_i(0)|$.
Let $X$ be one of these graphs, let $w$ be its natural weight function
and let 
$n=|X_i(0)|$.
Let $\F$ be a   field and let $V$ a vector space
over $\F$ of dimension at least $\floor{\frac{4}{3}\log_q n}+1$.
Provided that $|\F|$ or $\dim V$
are sufficiently large, there exist vectors $\{f_v\}_{v\in X(0)}$ in $V$
such that every $\floor{\frac{4}{3} \log_q n}+1$ of the $f_v$
are linearly independent.
Consider the collection $(f_v)_{v \in X(0)}$
as a $0$-cochain $f\in C^0(X,V)$    and form the subsheaf
$\calG$ of $V_X$ as in Example~\ref{EX:problem-in-quotient-sheaves};
briefly, we take $R=V$ and put  $R_v=0$ for every $v\in X(0)$
and $R_e=\F d_0f(e)=\F(f_{e^+}-f_{e^-})$ for every $e\in X(1)$.
As noted in that example, $\cb_0(X,w,V_X/\calG)=0$.
However, since the girth of $X$ is greater than $\frac{4}{3}\log_q n$,
and every $\floor{\frac{4}{3} \log_q n}+1$ of the $f_v$
are linearly independent, one readily checks that
every $\floor{\frac{4}{3} \log_q n}$ of the $R_x$
are linearly disjoint.
\end{example}

Finally, we ask whether Theorems~\ref{TH:cbe-for-quotient-sheaves}
and~\ref{TH:cbe-for-partite-quotient-sheaves} extend to higher dimensions.

\begin{que}
	Suppose that $(X,w)$ is a weighted simplicial complex
	(resp.\ $({r+1})$-partite weighted simplicial complex) 
	and we are given subgroups   $\{R_x\}_{x\in X}$ of $R$
	which 
	are all linearly disjoint. Form the subsheaf $\calG$ of $R_X$ as in Theorem~\ref{TH:cbe-for-quotient-sheaves}
	and suppose that the underlying graph of $X_z$ is a $[-\lambda,\lambda]$-expander
	(resp.\ $(\dim X-\dim z)$-partite $[-\lambda,\lambda]$-expander)
	for every $z\in X$ with $\dim z\leq \dim X-2$.
	Provided $\lambda$ is sufficiently small,
	is there an $\veps>0$, depending only on $\lambda$, $\dim X$  and $r$, 
	such that  $(X,w,R_X/\calG)$ is   an 
	$\veps$-coboundary expander in dimensions $0,\dots,\dim X-1$?
\end{que}

\bibliographystyle{plain}
\bibliography{MyBib_24_03}

\end{document}